\newtheorem{theorem}{Theorem}[section]
\newtheorem{lemma}[theorem]{Lemma}
\newtheorem{prop}[theorem]{Proposition}
\newtheorem{claim}[theorem]{Claim}
\newtheorem{definition}[theorem]{Definition}
\newtheorem{construction}[theorem]{Construction}
\newtheorem{observation}[theorem]{Observation}
\newcommand{\cH}{\mathcal{H}}
\newcommand{\cL}{\mathcal{L}}
\newcommand{\cF}{\mathcal{F}}
\newcommand{\cG}{\mathcal{G}}
\newcommand{\ex}{\mathrm{ex}}
\newcommand{\ES}{Erd\H{o}s-Simonovits}
\newcommand{\floor}[1]{\left\lfloor#1\right\rfloor}
\newcommand{\ceil}[1]{\left\lceil#1\right\rceil}
\title{Rational exponents for cliques}
\author{%
Sean English\footnote{University of North Carolina Wilmington. Contact: \texttt{EnglishS@uncw.edu}.}%
\and%
Anastasia Halfpap\footnote{Iowa State University. Contact: \texttt{ahalfpap@iastate.edu}.}%
\and%
Robert A.\ Krueger\footnote{Carnegie Mellon University. Contact: \texttt{rkrueger@andrew.cmu.edu}. Supported by NSF Awards DGE 21-4675 and DMS-2402204.}}
\date{}
\begin{document}
	
	\maketitle
	
	\begin{abstract}
		Let $\ex(n,H,\cF)$ be the maximum number of copies of $H$ in an $n$-vertex graph which contains no copy of a graph from $\cF$. Thinking of $H$ and $\cF$ as fixed, we study the asymptotics of $\ex(n,H,\cF)$ in $n$. We say that a rational number $r$ is \emph{realizable for $H$} if there exists a finite family $\cF$ such that $\ex(n,H,\cF) = \Theta(n^r)$. Using randomized algebraic constructions, Bukh and Conlon showed that every rational between $1$ and $2$ is realizable for $K_2$. We generalize their result to show that every rational between $1$ and $t$ is realizable for $K_t$, for all $t \geq 2$. We also determine the realizable rationals for stars and note the connection to a related Sidorenko-type supersaturation problem.
	\end{abstract}
	
	\section{Introduction}\label{sec::intro}
	
	For a family of graphs $\cF$ and a graph $H$, let $\ex(n,H,\cF)$ be the maximum number of copies of $H$ in an $n$-vertex $\cF$-free graph. A central object of study in extremal graph theory is $\ex(n,K_2,\cF)$, dating back to the earliest theorems of Mantel~\cite{M07} and Tur\'an~\cite{T41}. While $\ex(n,H,\cF)$ had been studied sporadically for particular $H$ and $\cF$, a systematic general study was started by Alon and Shikhelman~\cite{AS16}, and this extremal function has received much attention since then (see for instance \cite{GGMV2020, GP2019, GP2022, HP2020, MQ2018, Morrison2022}). We are interested in the asymptotics in $n$ of $\ex(n,H,\cF)$, thinking of $H$ and $\cF$ as fixed and finite; in particular, we are interested in \emph{which} asymptotics are possible for a given $H$.
	
	The Erd\H{o}s--Stone--Simonovits Theorem~\cite{ES46,ES66} states that
    \[ \ex(n,K_2,\cF) = \left(1 - \frac{1}{\chi(\cF)-1} + o(1) \right)\binom{n}{2} ,\]
    where $\chi(\cF)$ is the smallest chromatic number of any graph of $\cF$, satisfactorily determining the asymptotics of $\ex(n,K_2,\cF)$ when $\chi(\cF) \geq 3$. It is a major open problem in extremal graph theory to determine the asymptotics of $\ex(n,K_2,\cF)$ when $\cF$ contains a bipartite graph, and many natural open problems and conjectures have arisen around attempts to understand these asymptotics. Erd\H{o}s and Simonovits made a number of natural conjectures, which have been reiterated by others (see the survey of F\"uredi and Simonovits~\cite{FS13}). One conjecture~\cite{E81} is that for every $F$, $\ex(n,K_2,F) = (c_F+o(1)) n^{r_F}$ for some constants $c_F$ and $r_F$ depending on $F$; furthermore, they asked if $r_F$ is always rational.\footnote{We note that this conjecture is false when $K_3$ is substituted for $K_2$: Ruzsa and Szemer\'edi's results on the $(6,3)$-problem show that $\ex(n,K_3,K_4^-)$ is $o(n^2)$ but at least $n^{2-o(1)}$~\cite{AS16, RS78}. Analogously, the hypergraph analogue of this conjecture is false.} Inversely, they also asked~\cite{E81} if the following is true: for every rational $r \in [1,2]$, there exists a graph $F_r$ such that $\ex(n,K_2,F_r) = \Theta(n^r)$, or more weakly, that there exists a finite family $\cF_r$ such that $\ex(n,K_2,\cF_r) = \Theta(n^r)$. As usual, $f(n) = \Theta(g(n))$ means that there exists constants $c_1, c_2 > 0$ such that $c_1 g(n) \leq f(n) \leq c_2 g(n)$ for sufficiently large $n$. After several decades, this last conjecture was confirmed by Bukh and Conlon~\cite{BC18}.
	
	\begin{theorem}[Bukh, Conlon~\cite{BC18}]\label{thm::BukhConlon}
		For every $r \in [1,2]$, there exists a finite family $\cF_r$ such that $\ex(n,K_2,\cF_r) = \Theta(n^r)$.
	\end{theorem}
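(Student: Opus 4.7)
The plan is to encode the exponent $r \in [1,2] \cap \mathbb{Q}$ via a \emph{balanced rooted tree} $T$, take $\cF_r$ to be a finite family of ``dense extensions'' of $T$, and prove matching upper and lower bounds. Writing $r = 2 - s/t$ with $s, t$ positive integers satisfying $0 < s \leq t$, I first construct a tree $T$ with a distinguished set of root leaves $R$ such that $\rho(T) := |E(T)|/|V(T) \setminus R| = t/s$ and every rooted subtree $T' \supseteq R$ satisfies $\rho(T') \leq \rho(T)$ (the balanced condition). Producing such balanced rooted trees realizing an arbitrary rational density is a combinatorial exercise, handled by attaching paths and stars at the roots.

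Define $\cF_r$ to consist of the (finitely many) graphs obtained by gluing several copies of $T$ along their root sets and then further identifying internal vertices in ways that strictly increase the density beyond $\rho(T)$. For the upper bound, observe that in any $\cF_r$-free graph $G$ the number of extensions of any fixed placement of $R$ in $V(G)$ to a rooted copy of $T$ is bounded by a constant depending only on $T$, since otherwise two extensions could overlap to produce a forbidden subgraph. A standard extension-counting argument, using the balancedness of $T$ to control contributions along subtrees, then yields $e(G) = O(n^r)$.

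For the lower bound, apply the random algebraic method. Take the vertex set to be $\mathbb{F}_q^d$ for a suitable $d$ depending on $T$, so that $n = q^d$, and define the edge relation through the vanishing of a carefully chosen system of random polynomials of bounded degree, tuned to reproduce the expected extension counts through each subtree of $T$. Using the Lang-Weil bounds for point counts on varieties, one shows that the expected number of edges is $\Theta(n^r)$, while for each $F \in \cF_r$ the expected number of copies of $F$ is $o(n^r)$. Deleting one edge per forbidden copy then produces an $\cF_r$-free graph with $\Omega(n^r)$ edges.

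The hardest step is establishing the vanishing estimate for copies of each $F \in \cF_r$ in the random algebraic graph. For generic vertex configurations the expected count is controlled by the codimension of the corresponding algebraic variety, and this codimension is too large precisely because the balancedness of $T$ forces $F$ to have strictly excess density; however, degenerate configurations of vertices can in principle produce anomalously many copies, and ruling these out requires a stratification by vertex-incidence patterns combined with uniform Lang-Weil estimates over the parameter space of random polynomials. This ``extension concentration'' lemma is the technical heart of the argument.
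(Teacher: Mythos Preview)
Your outline has the right shape but contains a genuine gap in the lower bound, and a related mismatch in how you define $\cF_r$.

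The family in Bukh--Conlon (and in this paper's treatment) is the \emph{full} power $(T,R)^L$: every graph obtainable as a union of $L$ distinct copies of $T$ sharing the root set $R$, including the one in which the $L$ copies are pairwise disjoint outside $R$. This disjoint member has rooted density exactly $\rho(T)$, not strictly greater --- balancedness only gives that every member of $(T,R)^L$ has rooted density \emph{at least} $\rho(T)$. So your assertion that ``balancedness of $T$ forces $F$ to have strictly excess density'' is false for exactly this member, and that is what kills the first-moment deletion argument: with $p = n^{-1/\rho(T)}$ the expected number of copies of the disjoint member is $n^{|R|}$, which for most parameters is far larger than $n^r$, so deleting an edge per copy destroys everything. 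The actual lower bound does \emph{not} delete copies of forbidden graphs. Instead, for any fixed placement of $R$, the number of extensions to a rooted copy of $T$ in the random algebraic graph is the point count of a variety, and a Lang--Weil dichotomy says this count is either $O(1)$ or $\Omega(q)$. Balancedness (note: the paper's condition is that every subset $S$ of non-root vertices satisfies $e(S)/|S| \geq \rho(T)$, the reverse of the inequality you wrote) forces the expected number of extensions to be $O(1)$, so the $\Omega(q)$ alternative is ruled out for each root with high probability; one then cleans up the exceptional roots. This directly yields an $(T,R)^L$-free graph for $L$ a sufficiently large constant, with no need to control the total number of copies of any $F$.

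Your upper bound sketch also has the logic inverted and, as written, relies on the wrong family. If you only forbid the strictly-denser overlaps, then $L$ copies on a common root that happen to be disjoint outside $R$ are not forbidden, so your ``otherwise two extensions could overlap to produce a forbidden subgraph'' step fails. With the full family $(T,R)^L$, the argument in the paper runs the other direction: assuming $e(G) \geq C n^{2 - 1/\rho(T)}$, pass to a subgraph of minimum degree $\Omega(np)$, greedily embed $T$ to find $\Omega(n^{|V(T)|} p^{|E(T)|})$ copies, and then pigeonhole over the at most $n^{|R|}$ root placements to obtain $L$ copies on a common root --- hence a member of $(T,R)^L$. That supersaturation step (many edges $\Rightarrow$ many tree copies), not an ``extension-counting'' bound, is what produces $e(G) = O(n^r)$.
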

	
	Following~\cite{BC18}, there has been much work~\cite{CO2022, CLJ2021, JJM22, JMY22, JQ23, JQ2020, KKL21} to replace $\cF_r$ with a single graph in Theorem~\ref{thm::BukhConlon}. In this work, we instead generalize Theorem~\ref{thm::BukhConlon} to counting graphs other than $K_2$. We call a rational number $r$ \emph{realizable for $H$} if there exists a finite family $\cF_r$ such that $\ex(n,H,\cF_r) = \Theta(n^r)$. It is easy to show that, for any $H$, rationals strictly between $0$ and $1$, and those greater than $|V(H)|$, are not realizable for $H$, and hence Theorem~\ref{thm::BukhConlon} states that the realizable rationals for $K_2$ are $0$ and all those between $1$ and $2$, inclusive. Our main result is an extension of Theorem~\ref{thm::BukhConlon} from $K_2$ to $K_t$ for $t \geq 3$.
	
	\begin{theorem}\label{thm::Ktrealizable}
		Every rational between $1$ and $t$ is realizable for $K_t$; that is, for every rational $r \in [1,t]$, there exists a finite family of graphs $\cF_r$ such that $\ex(n,K_t,\cF_r) = \Theta(n^r)$.
	\end{theorem}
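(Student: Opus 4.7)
The plan is to adapt Bukh-Conlon's random algebraic construction (Theorem~\ref{thm::BukhConlon}) to the $K_t$-counting setting: for each rational $r \in [1,t]$, realize $r$ as the $K_t$-exponent of a non-bipartite analogue of a Bukh-Conlon graph, and identify a finite forbidden family capping the number of $K_t$-copies at $O(n^r)$.

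For the construction, given rational $r \in [1,t]$, set $s := 2 - (t-r)/\binom{t}{2}$, which lies in $[2-2/t,\,2] \subseteq [1,2]$. Take a non-bipartite random algebraic graph $G$ on $n = q^d$ vertices with edge density $\Theta(1/q)$, constructed as in Bukh-Conlon but using a random \emph{symmetric} polynomial $f(x,y)$. A first-moment computation gives expected edge count $\Theta(n^s)$ and expected number of $K_t$-copies equal to $\binom{n}{t}(1/q)^{\binom{t}{2}} = \Theta(n^{t-(2-s)\binom{t}{2}}) = \Theta(n^r)$. The standard alteration argument (delete one edge per bad subgraph) yields a graph realizing both counts and avoiding a prescribed finite family. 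Moreover, the Bukh-Conlon obstructions $\cF^{BC}_s$ continue to be avoided by $G$ with high probability, via essentially the same local moment argument as in the bipartite case.

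For the upper bound and forbidden family, the key observation is that $\cF^{BC}_s$-freeness alone does not suffice: it only yields $e(G) \leq O(n^s)$, and Kruskal-Katona then bounds the number of $K_t$-copies by $O(n^{st/2})$, which is strictly larger than $n^r$ whenever $s < 2$. We must augment $\cF^{BC}_s$ with additional \emph{balanced} forbidden graphs that control the $K_t$-count directly. The main obstacle is designing this augmented family $\cF_r$ and proving the matching upper bound: one must carry out a Bukh-Conlon-style moment computation on $K_t$-density rather than edge-density, identifying a finite class of ``clique-balanced'' forbidden graphs whose count in any $n$-vertex graph provably lower-bounds the $K_t$-count, and which is avoided by the non-bipartite random algebraic construction. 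Showing that such a finite class exists --- and that its avoidance forces the number of $K_t$-copies in any $\cF_r$-free graph to be $O(n^r)$ --- is the core technical challenge generalizing Bukh-Conlon's work.
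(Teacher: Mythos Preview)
Your proposal is a plan, not a proof: you correctly identify that the upper bound is the crux, but then explicitly leave it open, writing that ``showing that such a finite class exists \dots\ is the core technical challenge.'' That is precisely the content of the theorem. A proof must actually supply the families $\cF_r$ and verify $\ex(n,K_t,\cF_r)=O(n^r)$; you have done neither. Your description of what is needed is also slightly garbled: you want a supersaturation statement (many $K_t$'s in $G$ force many copies of some $F$, hence by pigeonhole a member of $(F,R)^L$), not a lower bound on the $K_t$-count in terms of the $F$-count, and the upper bound argument is deterministic, not a ``moment computation.''

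Concretely, what the paper does and your sketch lacks: (i) for each rational density $d\in(t/2,\infty)$ it constructs explicit balanced rooted $K_t$-trees $(T,R)$ of rooted density exactly $d$ --- two separate constructions are required (one built from $T_{K_2}(a,b)$ by blowing vertices up to $K_s$'s and adding spike cliques, and a second, for odd $t$ and $d>t-1$, based on a fan-like $K_3$-tree), and in each case one must verify balancedness and that $T[R]$ is a disjoint union of smaller cliques; (ii) it proves a supersaturation lemma: if $G$ has $\geq tn^tp^{\binom{t}{2}}$ copies of $K_t$, at most $n^ip^{\binom{i}{2}}$ copies of $K_i$ for $i<t$, and bounded $K_{s+1}$-codegree, then $G$ contains $\Omega(n^{|V(T)|}p^{|E(T)|})$ copies of $T$, which after pigeonholing over root embeddings yields a member of $(T,R)^L$; (iii) it runs an induction on $t$, so that the hypotheses on $K_i$-counts and on the number of copies of $T[R]$ (a union of cliques of size $<t$) come from the inductive statement at the same density. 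None of these three ingredients appears in your proposal, and each is genuinely nontrivial; in particular, engineering balanced $K_t$-trees hitting every rational density while keeping $T[R]$ a union of smaller cliques (so the induction closes) is where most of the work lies.
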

	
	Perhaps surprisingly, there exist graphs $H$ for which not every exponent between $1$ and $|V(H)|$ is realizable. Let $S_t$ be the star with $t$ edges.
	
	\begin{theorem}\label{thm::starsrealizable}
		The rationals realizable for $S_t$ are precisely $0$, $1$, and those in $[t,t+1]$.
	\end{theorem}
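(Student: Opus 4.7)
The plan is to prove the two directions of the theorem separately. For the non-realizable direction, I use a dichotomy on $\cF$: either some $F\in\cF$ is of the form $S_j\cup I_p$ for some $j,p\geq 0$ (a star plus isolated vertices), or no such $F$ exists. In the first case, for $n\geq j+p+1$ every $\cF$-free graph has maximum degree strictly less than $j$ (when $j\geq 1$) or bounded order (when $j=0$), so $\ex(n,S_t,\cF)\leq n\binom{j-1}{t}=O(n)$. In the second case every $F\in\cF$ contains a matching of size two, which $K_{1,n-1}$ does not, so $K_{1,n-1}$ is $\cF$-free for all sufficiently large $n$, giving $\ex(n,S_t,\cF)\geq\binom{n-1}{t}=\Omega(n^t)$. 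Together these rule out every exponent in $(1,t)$, and the trivial bound $\ex(n,S_t,\cF)\leq n\binom{n-1}{t}=O(n^{t+1})$ rules out exponents above $t+1$.

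The four values $r\in\{0,1,t,t+1\}$ are realized by small direct constructions: $\cF=\{S_t\}$ for $r=0$; $\cF=\{S_{t+1}\}$ with a $t$-regular extremal graph for $r=1$; $\cF=\{2K_2\}$, whose $2K_2$-free graphs are essentially stars plus isolated vertices, with $K_{1,n-1}$ extremal, for $r=t$; and $\cF=\emptyset$ with $K_n$ extremal for $r=t+1$. For the interior case $r=t+\alpha$ with $\alpha\in(0,1)$, I would set $s=1+(r-1)/t\in(2-\tfrac{1}{t},2)$ and invoke Theorem~\ref{thm::BukhConlon} to produce a finite family $\cF_s$ with $\ex(n,K_2,\cF_s)=\Theta(n^s)$, witnessed by a Bukh--Conlon construction $G_s$ that is approximately $d$-regular with $d=\Theta(n^{s-1})$. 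The star count in $G_s$ is
\[\sum_v\binom{d(v)}{t}=\Theta(n\,d^t)=\Theta\bigl(n^{1+(s-1)t}\bigr)=\Theta(n^r),\]
providing the lower bound $\ex(n,S_t,\cF_s)\geq\Omega(n^r)$.

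The main obstacle is the matching upper bound $\ex(n,S_t,\cF')=O(n^r)$ for a suitable finite $\cF'\supseteq\cF_s$. The edge bound $|E(G)|=O(n^s)$ alone is insufficient, because $\sum_v\binom{d(v)}{t}$ is maximized, at a fixed edge count, by concentrating degrees on few vertices; a concentrated degree sequence can realize $\Theta(n^{s+t-1})$ stars, which exceeds $n^r$ whenever $s<2$. My plan is to augment $\cF_s$ with finitely many small graphs consisting of a central vertex joined to a Bukh--Conlon-style bipartite configuration in its neighborhood, designed so that any vertex of degree significantly larger than $n^{s-1}$ is forced to embed one of them. Equivalently, what is needed is a Sidorenko-type supersaturation statement for $\cF_s$: any $\cF_s$-free graph with markedly more than $n^r$ copies of $S_t$ must contain a vertex whose neighborhood is itself too dense to remain $\cF_s$-free. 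I expect this degree-regulation/supersaturation step, the connection flagged in the abstract, to be the technical core of the argument.
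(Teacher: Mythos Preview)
Your non-realizability argument and the four endpoint constructions are essentially the paper's (Lemma~2.1 and the paragraph after it), modulo one slip: in the ``second case'' you assert that every $F$ contains a matching of size two, but $K_3$ is not of the form $S_j\cup I_p$ and yet contains no $2K_2$. The conclusion you need --- that $K_{1,n-1}$ is $\cF$-free --- still holds, since the subgraphs of $K_{1,n-1}$ are exactly the graphs $S_j\cup I_p$; just replace the matching claim by this direct observation.

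The genuine gap is your upper bound for $r\in(t,t+1)$. You correctly diagnose that forbidding $\cF_s$ only controls edges, not the degree distribution, and that concentrating degrees blows up the star count; but your proposed fix --- augmenting $\cF_s$ with graphs that ``regulate degrees'' by forcing structure inside large neighborhoods --- is left entirely unspecified, and you yourself flag it as the missing technical core. The paper does \emph{not} try to bound individual degrees. Instead it builds, for each pair $(a,b)$ with $b\geq ta$, a specific rooted tree $T_{S_t}(a,b)$ obtained from the Bukh--Conlon tree $T_{K_2}(a,b)$ by hanging $t-1$ extra rooted leaves off each root neighbor of an odd spine vertex (Construction~2.2). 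This tree is balanced with rooted density $b/a$, and its root set is a disjoint union of isolated vertices and copies of $S_{t-1}$. The forbidden family is $\bigcup_{i=1}^t T_{S_i}(a,b)^\ell$; the lower bound comes straight from Theorem~1.7. For the upper bound (Lemma~2.5), one prunes the $S_t$'s so that every surviving center and every surviving leaf lies in many surviving stars (Lemma~2.4), then greedily embeds $T_{S_t}(a,b)$ spine-vertex by spine-vertex, alternating between ``find many stars with this vertex as a leaf'' and ``find many edges at this center.'' The number of copies of the root set is bounded inductively, since it is a disjoint union of $S_{t-1}$'s and isolated vertices and the family already forbids $T_{S_{t-1}}(a,b)^\ell$. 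Pigeonhole on root sets then produces a member of $T_{S_t}(a,b)^\ell$.

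So the missing idea is not degree regulation but a bespoke $S_t$-tree whose root structure is controlled by induction on $t$, together with a direct embedding/supersaturation count of that tree from many $S_t$'s. Your plan of working purely through the $K_2$ family $\cF_s$ and then patching degrees does not obviously lead there.
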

	
	In order to show that $r$ is realizable for $H$, we must create a family $\mathcal{F}_r$ and demonstrate upper and lower bounds on $\ex(n,H,\cF_r)$. The lower bound construction is a graph based on random polynomials, and is essentially the same as the method used by Bukh and Conlon; see Section~\ref{sec::intro:lower}. Bukh and Conlon's upper bound argument for $K_2$ was relatively simple, but most of our work is in extending this argument from $K_2$ to other $H$; we must construct families $\cF$ that satisfy the technical conditions of our arguments while having enough variety so as to realize every desired rational. See Section~\ref{sec::intro:upper} for discussion of our upper bound, which may be of independent interest for its relation to Sidorenko-type generalized supersaturation problems.
	
	Before moving to the proof techniques, we note that Fitch~\cite{F19} proved an analogue of Theorem~\ref{thm::Ktrealizable} for hypergraphs (extending an older theorem of Frankl~\cite{F86}): for every integer $k$ and rational $r\in [1, k]$, there exists a finite family of $k$-uniform hypergraphs $\cF_{k,r}$ such that the maximum number of hyperedges in a $k$-uniform $\cF_{k,r}$-free hypergraph on $n$ vertices is $\Theta(n^r)$. Fitch's techniques are similar to ours and Bukh and Conlon's, although we run into some additional complications --- Fitch only shows that rationals in $[k-1,k]$ are realizable in the $k$-uniform setting, and achieves all other rationals through a stepping-up argument from lower uniformities that is unavailable to us. In particular, the natural attempt of taking the $t$-uniform constructions and replacing each hyperedge with a copy of $K_t$ does not work. We also hope our presentation gives some indication on how to proceed when $H$ is not a clique, although we do not pursue this direction. We view our Theorem~\ref{thm::Ktrealizable} as a natural extension of Theorem~\ref{thm::BukhConlon} and Fitch's result with the recent study of generalized extremal numbers.

	\subsection{Lower bounds: the form of our forbidden families}\label{sec::intro:lower}
	
	Bukh and Conlon proved Theorem~\ref{thm::BukhConlon} using \emph{powers} of \emph{balanced} \emph{rooted trees}; we define the relevant terminology now.
	
	\begin{definition}\label{def::rooted graph} \rm
		A \emph{rooted graph} $(F,R)$ is a graph $F$ with a specified set $R \subseteq V(F)$ of \emph{rooted vertices}. When the set of roots is understood, we often conflate $(F,R)$ with $F$. The \emph{$\ell^{\text{th}}$ power of $(F,R)$}, denoted $(F,R)^\ell$, is the set of all graphs obtained as the union of $\ell$ distinct copies of $F$ which share $R$ and which may intersect in any way outside of $R$.
	\end{definition}
	
\begin{figure}[ht]
		\begin{center}
			\begin{tikzpicture}[scale = 0.8]

				\filldraw (0,0) circle (0.05 cm);
				\filldraw (1,0) circle (0.05 cm);
				\filldraw (2,0) circle (0.05 cm);
				\filldraw (3,0) circle (0.05 cm);
				\filldraw (4,0) circle (0.05 cm);
				\filldraw (0,-1) circle (0.05 cm);
				\filldraw (2,-1) circle (0.05 cm);
				\filldraw (4,-1) circle (0.05 cm);
				\draw (0,0) -- (4,0);
				\draw (0,0) -- (0,-1);
				\draw (2,0) -- (2,-1);
				\draw (4,0) -- (4,-1);
				\draw (-0.5,-.7) -- (4.5,-.7);

                \draw (5.5, -.7) -- (10.5,-.7);
				\filldraw (6,0) circle (0.05 cm);
				\filldraw (7,0) circle (0.05 cm);
				\filldraw (8,0) circle (0.05 cm);
				\filldraw (9,0) circle (0.05 cm);
				\filldraw (10,0) circle (0.05 cm);
                \filldraw (6.5,0.5) circle (0.05 cm);
				\filldraw (7.5,0.5) circle (0.05 cm);
				\filldraw (8.5,0.5) circle (0.05 cm);
				\filldraw (9.5,0.5) circle (0.05 cm);
				\filldraw (10.5,0.5) circle (0.05 cm);
				\filldraw (6,-1) circle (0.05 cm);
				\filldraw (8,-1) circle (0.05 cm);
				\filldraw (10,-1) circle (0.05 cm);
                \draw (6,0) --  (6, -1);
				\draw (8,0) --  (8, -1);
				\draw (10,0)  -- (10, -1);
				\draw (6,0)  -- (7,0);
				\draw (7,0)  -- (8,0);
				\draw (8,0)  -- (9,0);
				\draw (9,0)  -- (10,0);
                \draw (6.5,0.5) -- (6,-1);
                \draw (8.5, 0.5) -- (8,-1); 
                \draw (10.5, 0.5) -- (10,-1);
                \draw (6.5,0.5) -- (10.5,0.5);
                \filldraw (7,1) circle (0.05 cm);
				\filldraw (8,1) circle (0.05 cm);
				\filldraw (9,1) circle (0.05 cm);
				\filldraw (10,1) circle (0.05 cm);
				\filldraw (11,1) circle (0.05 cm);
                \draw (6,-1) -- (7,1);
                \draw (8,-1) -- (9,1);
                \draw (10,-1) -- (11,1);
                \draw (7,1) -- (11,1);

                \draw (12.5, -.7) -- (17.5,-.7);
                \draw (13,0) --  (13, -1);
				\draw (15,0) --  (15, -1);
				\draw (17,0)  -- (17, -1);
				\draw (13,0)  -- (14,0);
				\draw (14,0)  -- (15,0);
				\draw (15,0)  -- (16,0);
				\draw (16,0)  -- (17,0);
                \draw[blue, very thick] (13.5,0.5) -- (13,-1);
                \draw[blue, very thick] (15.5, 0.5) -- (15,-1); 
                \draw[blue, very thick] (17.5, 0.5) -- (17,-1);
                \draw[blue, very thick] (13.5,0.5) -- (14,0);
                \draw[blue, very thick] (14,0) -- (15.5,0.5);
                \draw[blue, very thick] (15.5,0.5) -- (17,0);
                \draw[blue, very thick] (17,0) -- (17.5,0.5);

                \draw[red, very thick, dashed] (13,-1) -- (14,1);
                \draw[red, very thick, dashed] (15,-1) -- (16,1);
                \draw[red, very thick, dashed] (17,-1) -- (18,1);
                \draw[red, very thick, dashed] (14,1) -- (15.5, 0.5);
                \draw[red, very thick, dashed] (15.5,0.5) -- (16,1);
                \draw[red, very thick, dashed] (16,1) -- (16,0);
                \draw[red, very thick, dashed](16,0) -- (18,1);

                \filldraw (13,0) circle (0.05 cm);
				\filldraw (14,0) circle (0.05 cm);
				\filldraw (15,0) circle (0.05 cm);
				\filldraw (16,0) circle (0.05 cm);
				\filldraw (17,0) circle (0.05 cm);
                \filldraw (13.5,0.5) circle (0.05 cm);
				\filldraw (15.5,0.5) circle (0.05 cm);
				\filldraw (17.5,0.5) circle (0.05 cm);
				\filldraw (13,-1) circle (0.05 cm);
				\filldraw (15,-1) circle (0.05 cm);
				\filldraw (17,-1) circle (0.05 cm);
                 \filldraw (14,1) circle (0.05 cm);
				\filldraw (16,1) circle (0.05 cm);
				\filldraw (18,1) circle (0.05 cm);

			\end{tikzpicture}
			
			\caption{A rooted tree $T$ (left) and two graphs in the $3^{\text{rd}}$ power of $T$ (middle and right). On the right, we use distinct colors and line weights to distinguish the three overlapping copies of $T$. Rooted vertices are below the line.}\label{tree powers}    
		\end{center}
	\end{figure}
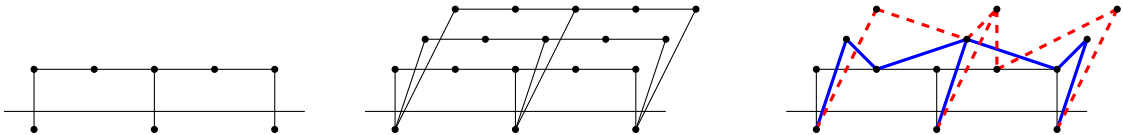
	
	The main parameter controlling the extremal number of $(F,R)^\ell$ is the \emph{rooted density}, as defined below.
	
	\begin{definition}\label{def::rooted density} \rm
		Let $(F,R)$ be a rooted graph. The \emph{rooted density} of $S \subseteq V(F) \setminus R$, is the number of edges incident to $S$ divided by $|S|$, denoted $d(S) = e(S)/|S|$. The rooted density of $(F,R)$ is the rooted density of $V(F) \setminus R$, denoted $d((F,R)) = d(V(F) \setminus R)$. We say that $(F,R)$ is \emph{balanced} if $d(S)$ is minimized by $S = V(F) \setminus R$.
	\end{definition}
	
	Our main lower bounds come in the following general form, proven by Spiro~\cite{S24} as a straightforward modification of the random polynomial construction of Bukh and Conlon~\cite{BC18}.
	
	\begin{theorem}[Spiro~\cite{S24}]\label{thm::lowerbound}
		Let $(F_i,R_i)$ be a finite number of balanced rooted graphs of rooted density at least $d$, and let $H$ be a graph. Then there exists $L_0$ such that for all $L \geq L_0$, $\ex(n,H,\cF) = \Omega(n^{|V(H)|-|E(H)|/d})$, where
		\[ 
  \cF = \bigcup_i (F_i,R_i)^L.
  \]
	\end{theorem}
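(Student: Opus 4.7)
The plan is to adapt the random algebraic construction of Bukh and Conlon~\cite{BC18}. Fix a prime $q$ and choose a positive integer $k$ with $s := k/d$ also a positive integer, and set $n = q^k$. (Writing $d = p_1/p_2$ in lowest terms, any multiple of $p_1$ works for $k$; extending to general $n$ uses monotonicity of $\ex$ up to constants.) Let $V = \mathbb{F}_q^k$, and independently select symmetric polynomials $f_1, \ldots, f_s \colon V \times V \to \mathbb{F}_q$ of bounded degree uniformly at random. Define the random graph $G$ on $V$ by placing the edge $\{u, v\}$ precisely when $f_i(u, v) = 0$ for all $i \in [s]$. A fixed pair is an edge with probability approximately $q^{-s} = n^{-1/d}$, so the expected number of copies of $H$ in $G$ is $\Theta(n^{|V(H)| - |E(H)|/d})$, and standard moment arguments yield concentration.

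The main technical ingredient is a Bukh--Conlon-style \emph{bounded-extensions lemma}: for each balanced rooted graph $(F_i, R_i)$ of rooted density at least $d$, there exists a constant $C_i$ (independent of $n$) such that, with positive probability over the random polynomials, every embedding of $R_i$ into $G$ extends to at most $C_i$ copies of $F_i$. I would prove this by interpreting the set of $F_i$-extensions of a given $R_i$-embedding as the $\mathbb{F}_q$-points of an algebraic variety cut out by the polynomial equations indexed by $E(F_i)$. The rooted-density hypothesis forces this variety to have generic dimension at most $0$, and balancedness extends the same dimension bound to every subset $S \subseteq V(F_i) \setminus R_i$, which is exactly what is needed to prevent ``bad'' positive-dimensional subvarieties from dominating. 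A Lang--Weil-type estimate then bounds the $\mathbb{F}_q$-point count by a constant depending only on $(F_i, R_i)$ and the polynomial degree, uniformly over all $R_i$-embeddings.

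With these constants $C_i$ in hand, set $L_0 := 1 + \max_i C_i$. Then for every $L \geq L_0$ and every $i$, the graph $G$ cannot contain any graph in $(F_i, R_i)^L$: such a copy would produce an $R_i$-embedding with $L > C_i$ distinct extensions to copies of $F_i$. Hence $G$ is $\cF$-free while still containing $\Omega(n^{|V(H)| - |E(H)|/d})$ copies of $H$, proving the theorem.

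The hard part is the bounded-extensions lemma, and balancedness is precisely what makes it work: without it, some subset $S \subseteq V(F_i) \setminus R_i$ of rooted density below $d$ would contribute a positive-dimensional ``defect'' component to the extension variety, causing the extension count to blow up polynomially for many embeddings. Spiro's contribution over Bukh--Conlon is essentially to verify that the same construction simultaneously handles unions of power families and the counting of an arbitrary $H$, both of which follow from the basic count and concentration described above.
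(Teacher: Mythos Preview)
The paper does not prove this theorem; it simply cites Spiro~\cite{S24}, noting that it is a ``straightforward modification of the random polynomial construction of Bukh and Conlon.'' Your outline follows that approach and is correct in its overall shape: the random algebraic graph, the expected count of copies of $H$, the role of balancedness, and the choice of $L_0$ are all the right ingredients.

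The one genuine over-simplification is in your bounded-extensions step. It is not true that, with positive probability, \emph{every} $R_i$-embedding has at most $C_i$ extensions; your phrase ``uniformly over all $R_i$-embeddings'' is too strong. What the Bukh--Conlon argument actually gives is a dichotomy: for each fixed embedding of $R_i$, the extension variety either has dimension $0$ (hence at most $C_i$ $\mathbb{F}_q$-points) or dimension at least $1$ (hence $\Omega(q)$ points by Lang--Weil). Balancedness is then used in a \emph{first-moment} argument: the probability that a given embedding is ``bad'' is bounded using the inequality $d(S) \geq d$ for every $S \subseteq V(F_i)\setminus R_i$, and one computes that the expected number of bad embeddings is small enough that deleting all vertices involved in them removes only a negligible fraction of the copies of $H$. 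Without this deletion step the construction may well contain members of $(F_i,R_i)^L$. Your final paragraph correctly identifies the obstacle, but the resolution is not a uniform bound over all embeddings; it is a bound on the number of exceptional embeddings followed by a cleanup.
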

	
	Thus to achieve a particular exponent for $H$, we find the appropriate density $d$ and then engineer a set of balanced rooted graphs each with that rooted density. We require the $(F_i, R_i)$ to be balanced so that every rooted graph in $(F_i,R_i)^\ell$ (with root set $R$) has rooted density at least as large as the rooted density of $(F_i,R_i)$.
	
	Our constructions utilize the rooted graph construction Bukh and Conlon use to prove Theorem~\ref{thm::BukhConlon}; we recall this construction and its main properties now.
	
	\begin{construction}[Bukh, Conlon~\cite{BC18}]\label{constr::BC}
		Let $b > a \geq 1$ be integers, and let $T_{K_2}(a,b)$ denote the following rooted tree. Begin with a path $u_1, \dots, u_a$, which is not in the root and which collectively we call the \emph{spine}, and attach a new leaf, which is a root, to $u_i$ each time $i$ appears in the following sequence:
		\[ \floor{1 + 0 \cdot \frac{a}{b-a}}, \floor{1 + 1 \cdot \frac{a}{b-a}}, \dots, \floor{1 + (b-a-1) \cdot \frac{a}{b-a}}, a .\]
		See Figure~\ref{bktrees}.
	\end{construction}

	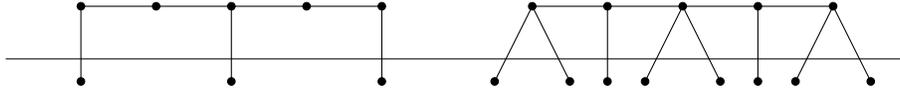
\begin{figure}[ht]
		\begin{center}
			\begin{tikzpicture}

				\filldraw (0,0) circle (0.05 cm);
				\filldraw (1,0) circle (0.05 cm);
				\filldraw (2,0) circle (0.05 cm);
				\filldraw (3,0) circle (0.05 cm);
				\filldraw (4,0) circle (0.05 cm);
				\filldraw (0,-1) circle (0.05 cm);
				\filldraw (2,-1) circle (0.05 cm);
				\filldraw (4,-1) circle (0.05 cm);
				\draw (0,0) -- (4,0);
				\draw (0,0) -- (0,-1);
				\draw (2,0) -- (2,-1);
				\draw (4,0) -- (4,-1);
				\draw (-1,-.7) -- (11,-.7);
				
				\filldraw (6,0) circle (0.05 cm);
				\filldraw (7,0) circle (0.05 cm);
				\filldraw (8,0) circle (0.05 cm);
				\filldraw (9,0) circle (0.05 cm);
				\filldraw (10,0) circle (0.05 cm);
				\filldraw (6.5,-1) circle (0.05 cm);
				\filldraw (8.5,-1) circle (0.05 cm);
				\filldraw (10.5,-1) circle (0.05 cm);
    			\filldraw (5.5,-1) circle (0.05 cm);
				\filldraw (7.5,-1) circle (0.05 cm);
				\filldraw (9.5,-1) circle (0.05 cm);
                \filldraw (7,-1) circle (0.05 cm);
                \filldraw (9,-1) circle (0.05 cm);
				\draw (6,0) --  (5.5, -1);
				\draw (8,0) --  (7.5, -1);
				\draw (10,0)  -- (9.5, -1);
                \draw (6,0) --  (6.5, -1);
				\draw (8,0) --  (8.5, -1);
				\draw (10,0)  -- (10.5, -1);
				\draw (6,0)  -- (7,0);
				\draw (7,0)  -- (8,0);
				\draw (8,0)  -- (9,0);
				\draw (9,0)  -- (10,0);
                \draw (7,0) -- (7,-1);
                \draw (9,0) -- (9,-1);
			\end{tikzpicture}
			
			\caption{$T_{K_2}(5,7)$ (left) and $T_{K_2}(5,12)$ (right). Rooted vertices are below the line.}\label{bktrees}    
		\end{center}
	\end{figure}

	\begin{prop}[Bukh, Conlon~\cite{BC18}]\label{prop::balanced:BC}
		For all $b > a \geq 1$, $T_{K_2}(a,b)$ is balanced with rooted density $b/a$. In particular, every non-root vertex of $T_{K_2}(a,b)$ has degree at least $b/a$.
	\end{prop}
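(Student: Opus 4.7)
My approach is to directly verify both claims by exploiting the simple structure of $T = T_{K_2}(a,b)$: a spine path on the $a$ non-root vertices $u_1, \ldots, u_a$, together with $b - a + 1$ pendant leaves (the roots) distributed along the spine at the positions listed in the construction.

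The rooted density is immediate: every edge of $T$ is incident to some spine vertex, since spine edges join two spine vertices and each pendant edge joins a spine vertex to a root. So the number of edges incident to $V(T) \setminus R$ equals $|E(T)| = (a-1) + (b-a+1) = b$, giving $d((T,R)) = b/a$.

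For the balanced property, the main structural observation is that if $S \subseteq V(T) \setminus R$ is decomposed into its maximal intervals $S_1, \ldots, S_c$ of consecutive spine vertices, then $e(S) = \sum_i e(S_i)$: distinct blocks are non-adjacent along the spine, and each root is pendant to a single spine vertex, so no edge is shared between blocks. It therefore suffices to verify $e(S')/|S'| \geq b/a$ whenever $S' = \{u_j, \ldots, u_{j+k-1}\}$ is a single interval. For such an $S'$ we have $e(S') = (k-1) + \sigma + L(j, j+k-1)$, where $\sigma \in \{0,1,2\}$ is the number of boundary spine edges leaving $S'$ (one for each of $j > 1$ and $j+k-1 < a$) and $L(j,j')$ denotes the number of leaves pendant to $u_j, \ldots, u_{j'}$. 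Setting $\alpha = (b-a)/a$, the key technical input is the equidistribution estimate
\[ L(j, j') \geq k\alpha - 1 + \mathbf{1}_{\{j' = a\}}, \]
obtained by applying $\lceil y \rceil - \lceil x \rceil \geq y - x - 1$ to the floor sequence defining the leaf positions (the appended term $a$ in the sequence forces an additional leaf at $u_a$, accounting for the indicator). A short four-case analysis on whether $S'$ meets the left end of the spine, the right end, both, or neither then gives $a \cdot e(S') \geq b \cdot k$ in every case, with equality precisely when $S' = V(T) \setminus R$. The degree claim is the specialization to singletons $|S'| = 1$.

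The main obstacle I anticipate is the equidistribution estimate and its interaction with the four boundary configurations. The leaf-placement sequence is calibrated so that the bound $k\alpha - 1$ is exactly what is needed to balance the interior ($\sigma = 2$) case, while the extra leaf at $u_a$ is exactly what compensates for the missing right-boundary spine edge. Matching these pieces across the boundary cases will require careful bookkeeping of ceilings and floors, but is otherwise routine.
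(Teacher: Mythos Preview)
The paper does not supply its own proof of this proposition; it is quoted as a result of Bukh and Conlon. Your direct-verification strategy is the natural one and is essentially correct, but the equidistribution estimate you record is asymmetric and, as stated, is not strong enough to close the two left-end cases of your four-case analysis.

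Concretely, when $j = 1$ and $j' < a$ you have $\sigma = 1$, so with your stated bound $L(1,j') \geq k\alpha - 1$ you only obtain
\[
e(S') \;\geq\; (k-1) + 1 + (k\alpha - 1) \;=\; k(1+\alpha) - 1 \;=\; \frac{kb}{a} - 1,
\]
which is one short of the required $kb/a$; the same deficit occurs when $j=1$ and $j'=a$ (there $\sigma = 0$). The fix is to observe that the left endpoint contributes a saving symmetric to the one you already recorded at the right endpoint: the first term of the floor sequence is $\lfloor 1 \rfloor = 1$, so when $j = 1$ the lower endpoint in your ceiling count is exactly $0$ rather than merely at most $1$, and the estimate sharpens to
\[
L(j,j') \;\geq\; k\alpha - 1 + \mathbf{1}_{\{j=1\}} + \mathbf{1}_{\{j'=a\}}.
\]
With both indicators present, each of the four cases yields $e(S') \geq k + k\alpha = kb/a$, with equality precisely when $S'$ is the full spine, and your argument goes through.
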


	\subsection{Upper bounds: a supersaturation problem}\label{sec::intro:upper}
	
	Theorem~\ref{thm::lowerbound} provides a lower bound on $\ex(n,H,\cF)$, as long as $\cF$ is the union of sufficiently large powers of balanced rooted graphs, $(F,R)^\ell$. The variety and simplicity of $(F,R)^\ell$ makes the upper bound arguments have a simple structure: if there are more than $\ell-1$ times as many copies of $F$ in a graph $G$ as copies of $F[R]$, then by the pigeonhole principle, we can find an element of $(F,R)^\ell$ in $G$. We can limit the number of copies of $F[R]$ by putting powers of other balanced rooted graphs in $\cF$, assuming we understand which exponents are realizable for the components of $F[R]$. For example, for the $(F,R)$ we construct, in the case when $H$ is a clique (Theorem~\ref{thm::Ktrealizable}), $F[R]$ is the disjoint union of smaller cliques, and hence we can bound the number of copies of it using induction on the size of the clique.
	
	The main effort is in solving a supersaturation problem --- showing that there are many copies of $F$ in an $n$-vertex graph $G$ with many copies of $H$. The quantification of both instances of `many' comes from intuition provided by the random graph $G(n,p)$. Indeed, the graphs constructed in the proof of Theorem~\ref{thm::lowerbound} have the same \emph{small} subgraph counts as $G(n,p)$ for appropriate $p$. Erd\H{o}s and Simonovits~\cite{ES84} and Sidorenko~\cite{S91,S93} conjecture that, for bipartite $F$, the random graph approximately minimizes the number of copies of $F$ among all graphs with a given number of edges. This supersaturation approach is fundamental to degenerate extremal graph theory problems, appearing as early as K\"ov\'ari, S\'os, and Tur\'an's bound on $\ex(n,K_2,K_{s,t})$~\cite{KST54}; recently, supersaturation has proven an important ingredient in many asymptotic enumeration results using the hypergraph container method of~\cite{BMS18, ST2015}. Generalized supersaturation problems, where we assume $G$ contains many copies of $H$ and conclude that $G$ has many copies of $F$, have been recently systematically studied (see, e.g., \cite{CNR2022, GNV2022, HP2020}). 
	
	Call $F$ \emph{$H$-\ES}~if there exists $\alpha > 0$ and $c>0$ such that for every $p \geq n^{2-\alpha}$ and $n$ sufficiently large, every $n$-vertex graph with at least $n^{|V(H)|} p^{|E(H)|}$ copies of $H$ contains at least $c n^{|V(F)|} p^{|E(F)|}$ copies\footnote{The related notion of a graph $F$ being \emph{Sidorenko} has to do with graph homomorphisms, rather than copies, of $F$ in $n$-vertex graphs with $n$ sufficiently large. In that case of Sidorenko's conjecture~\cite{S91,S93}, one takes $c$ to be $1-o(1)$.} of $F$. Erd\H{o}s and Simonovits's conjecture is that all bipartite graphs are $K_2$-\ES.\footnote{It is not easy to conjecture which graphs are $K_t$-\ES. For example, $K_{2,2,2}$ is not $K_3$-\ES~\cite{AS16}, but Dubroff, Gunby, Narayanan, and Spiro~\cite{DGNS23} (roughly) conjecture that $K_{3,t}$ is $K_3$-\ES.} For example, trees are $K_2$-\ES~(for every $\alpha < 1$), because every $n$-vertex graph with $n^2 p$ edges has a subgraph of minimum degree at least $np$, in which we can greedily find many copies of the desired tree. This simple argument proves Theorem~\ref{thm::BukhConlon} when combined with Theorem~\ref{thm::lowerbound} using $T_{K_2}(a,b)$ from Construction~\ref{constr::BC}, observing that the root sets of these graphs form independent sets. To prove Theorem~\ref{thm::Ktrealizable}, we generalize this argument, showing that $K_t$-trees, graphs `built' out of copies of $K_t$ in a tree-like manner, are $K_t$-\ES. Our constructions take the following form.
	
	\begin{definition}\label{def::Htree}
		Given graphs $H$ and $T$, we say $T$ is an $H$-tree if there exists a sequence of subgraphs of $T$ isomorphic to $H$, $H_1\subseteq T, H_2\subseteq T, \dots, H_b\subseteq T$, with $T=\bigcup_{i=1}^b H_i$, and with the property that, for every $2\leq i\leq b$, there exists $j<i$ such that the following three conditions hold.
		\begin{enumerate}
			\item $V(H_i) \cap \left(V(H_1)\cup\cdots\cup V(H_{i-1})\right) = V(H_i) \cap V(H_j)$,
			\item $V(H_i) \cap V(H_j)$ is neither empty nor equal to $V(H_i)$, and
			\item there exists an isomorphism between $H_i$ and $H_j$ which fixes $V(H_i) \cap V(H_j)$.
		\end{enumerate}
	\end{definition}
	
	Section~\ref{sec::tools} develops several tools which show in generality that $K_t$-trees are $K_t$-\ES; in fact, we show that every $n$-vertex graph with at least $n^{|V(K_t)|} p^{|E(K_t)|}$ copies of $K_t$ contains $\Omega(n^{|V(F)|} p^{|E(F)|})$ copies of a $K_t$-tree $F$ for all $p$ above a certain optimal threshold, depending on the sizes of $H_i \cap H_j$ which appear in the definition of $H$-tree, assuming the graph does not contain some other forbidden subgraphs. This is needed to obtain all the rational exponents in Theorem~\ref{thm::Ktrealizable}. The main thing we are fighting is injectivity --- that is, we seek copies of $F$, not homomorphisms of $F$ into $G$. This manifests in some important technical constraints that we must work around when creating our $K_t$-tree constructions. In fact, injectivity is the challenge in much of the recent work on improving the forbidden families from Theorem~\ref{thm::BukhConlon} to single forbidden graphs (see~\cite{CO2022, CLJ2021, JJM22, JMY22, JQ23, JQ2020, KKL21}).
	
	To conclude this overview, we summarize the major difficulties we face: for every rational density, we need a balanced $H$-tree with that rooted density and a comparatively simple structure induced by its roots; furthermore, we must prove that this $H$-tree is $H$-\ES~at that density.

	\subsection{Definitions, notation and organization}\label{sec::intro:org}
	
	We will silently assume throughout the paper that no graphs $F\in \mathcal{F}$ contain isolated vertices, as we are only concerned with finite $\mathcal{F}$ and $n$ large, so isolated vertices will not affect the extremal number. Given a graph $G$ and disjoint sets $A,B\subseteq V(G)$, we define the induced bipartite graph $G[A,B]$ to be the graph with $V(G[A,B])=A\cup B$ and $E(G[A,B])=\{ab\in E(G)\mid a\in A,b\in B\}$.
	
	In Section~\ref{sec::stars} we prove Theorem~\ref{thm::starsrealizable} on the realizable exponents for stars, which serves as a gentle introduction to the ideas needed for Theorem~\ref{thm::Ktrealizable}. In Section~\ref{sec::tools} we create some general tools for these lower bound problems for cliques, which we expect could be generalized to other graphs. In Section~\ref{section constructions}, we give the rooted $K_t$-tree constructions, which are required in order to define our forbidden families, and prove that they are balanced. Finally in Section~\ref{main thm sec}, we provide the proof of Theorem~\ref{thm::Ktrealizable}.

	\section{Exponents for stars}\label{sec::stars}
	
	Recall that $S_t$ is the star with $t$ edges. We first observe that not all exponents for $S_t$ are achievable.
	
	\begin{lemma}\label{lem::star non realizable}
		Let $\cF$ be a finite family of graphs. Then $\ex(n,S_t, \cF)$ is either $\Theta(1)$, $\Theta(n)$, or $\Omega(n^t)$.
	\end{lemma}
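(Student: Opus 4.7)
The plan is to analyze $\ex(n, S_t, \cF)$ by splitting on whether $\cF$ contains a star, exploiting the identity that the number of copies of $S_t$ in any graph $G$ equals $\sum_{v \in V(G)} \binom{d(v)}{t}$, so that all copies are localized at vertices of degree at least $t$.

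First I would dispose of the extreme cases. If $\cF$ contains no star, then $K_{1, n-1}$ is $\cF$-free, since its only subgraphs without isolated vertices are themselves stars; this graph contributes $\binom{n-1}{t} = \Omega(n^t)$ copies of $S_t$. Otherwise, let $k$ be the smallest integer with $S_k \in \cF$. Every $\cF$-free graph has maximum degree at most $k-1$, so $\ex(n, S_t, \cF) \leq n \binom{k-1}{t} = O(n)$. If $k \leq t$, then the maximum degree is already strictly less than $t$ and $\cF$-free graphs contain no $S_t$ at all, giving the $\Theta(1)$ case.

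The main case is $k > t$, where I must distinguish $\Theta(1)$ from $\Theta(n)$. The dichotomy I would introduce is whether the disjoint union $L \cdot K_{1,t}$ is $\cF$-free for every $L$. If so, then $\lfloor n/(t+1) \rfloor$ disjoint copies of $K_{1,t}$ form an $\cF$-free $n$-vertex graph with $\Omega(n)$ copies of $S_t$, giving $\Theta(n)$. Otherwise, some $F \in \cF$ embeds into $L \cdot K_{1,t}$ for some $L$, which forces $F$ to be a disjoint union of stars $K_{1,j_1} \sqcup \cdots \sqcup K_{1, j_c}$ with every $j_i \leq t$.

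The main obstacle is showing that forbidding such a disconnected $F$ bounds the number of copies of $S_t$ by a constant. I would use a greedy embedding argument: iteratively select a vertex $v_i$ of degree at least $t$ in the current graph, choose $j_i$ of its neighbors to form a copy of $K_{1, j_i}$, and delete the $j_i + 1$ chosen vertices. Since each deleted vertex originally had degree at most $k-1$, each such round reduces the number of remaining vertices of degree at least $t$ by at most $(j_i+1) + (j_i+1)(k-1) = (j_i + 1) k \leq (t+1)k$, accounting for the deleted vertices themselves and for those whose degree drops below $t$ because their neighbors were removed. Hence if the original graph has more than $c(t+1)k$ vertices of degree $\geq t$, the procedure runs for all $c$ rounds and produces a copy of $F$, contradicting $\cF$-freeness. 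This caps the high-degree count at $c(t+1)k = O(1)$, so the total number of copies of $S_t$ is at most $c(t+1)k \binom{k-1}{t} = O(1)$, completing the $\Theta(1)$ case.
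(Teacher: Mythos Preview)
Your proof is correct and follows essentially the same approach as the paper: split on whether $\cF$ contains a star, use the star to bound maximum degree, then split on whether arbitrarily many disjoint copies of $S_t$ are $\cF$-free, and in the final case extract a forbidden disjoint union of small stars by a greedy argument. The only minor difference is cosmetic: the paper bounds the number of copies of $S_t$ that can intersect a given copy and greedily peels off $c$ disjoint copies of $S_t$, whereas you bound the number of vertices of degree at least $t$ and greedily embed the components $K_{1,j_i}$ of $F$ directly; both yield the same $O(1)$ conclusion.
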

	
	\begin{proof}
		If $\cF$ does not contain a star, then $S_{n-1}$ is $\cF$-free and contains $\Omega(n^t)$ copies of $S_t$. Otherwise, $\cF$ contains a star, so there exists $\Delta = O(1)$ such that every $\cF$-free graph must have maximum degree at most $\Delta$. This means that every $\cF$-free graph contains at most $n \binom{\Delta}{t} = O(n)$ copies of $S_t$. Furthermore, if the disjoint union of $\floor{n/t}$ copies of $S_t$ is $\cF$-free, then $\ex(n, S_t, \cF) = \Theta(n)$. Otherwise there exists $c = O(1)$ such that some element of $\cF$ is a subgraph of the disjoint union of $c$ copies of $S_t$. In this case, let $G$ be a $\cF$-free graph. Since $G$ has maximum degree at most $\Delta$, at most $(t+1) (\Delta+1) \binom{\Delta}{t}$ copies of $S_t$ intersect a given copy of $S_t$ in $G$. Thus if $G$ has more than $c (t+1)(\Delta+1) \binom{\Delta}{t}$ copies of $S_t$, then $G$ contains $c$ vertex disjoint copies of $S_t$, contradicting that $G$ is $\cF$-free, and so $\ex(n,S_t,\cF) = O(1)$.
	\end{proof}
	
	The exponents of $0$ and $1$ are achievable, by taking $\cF = \{S_t\}$ and $\cF = \{S_{t+1}\}$. Furthermore, $t+1$ and $t$ are achievable by taking $\cF = \emptyset$ and $\cF = \{2K_2\}$.\footnote{We remark that while taking $\mathcal{F} = \emptyset$ is the simplest example of a family achieving the exponent $t+1$, there are many non-empty families $\mathcal{F}$ we could also choose. Alon and Shikhelman~\cite{AS16} showed that  $\ex(n,H,F) = o(n^{|V(H)|})$ if and only if $F$ is a subgraph of a blow-up of $H$. Otherwise, $\ex(n,H,F) = \Omega(n^{|V(H)|})$. So in fact, it is quite easy to find non-trivial examples achieving an exponent of $|V(H)|$.} We now show that every exponent between $t$ and $t+1$ is achievable using the following rooted graph.
	
	\begin{construction}\label{construction stars}
		Let $b > a \geq 1$ be integers, and consider $T_{K_2}(a,b)$ with spine $u_1, \dots, u_a$. Construct $T_{S_t}(a,b)$ by appending $t-1$ new root leaves to each root vertex of $T_{K_2}(a,b)$ adjacent to a $u_i$ with odd $i$. See Figure~\ref{st trees}.
	\end{construction}

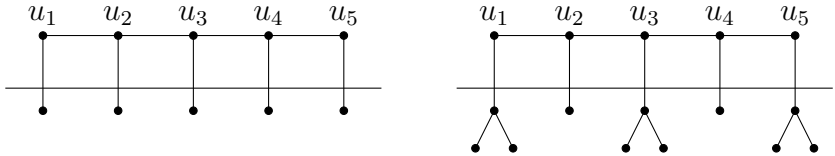
\begin{figure}[ht]

\begin{center}

\begin{tikzpicture}

				\filldraw (0,0) circle (0.05 cm) node[above]{$u_1$};
				\filldraw (1,0) circle (0.05 cm) node[above]{$u_2$};
				\filldraw (2,0) circle (0.05 cm) node[above]{$u_3$};
				\filldraw (3,0) circle (0.05 cm) node[above]{$u_4$};
				\filldraw (4,0) circle (0.05 cm) node[above]{$u_5$};
				\filldraw (0,-1) circle (0.05 cm);
                \filldraw (1,-1) circle (0.05 cm);
                \filldraw (3,-1) circle (0.05 cm);
				\filldraw (2,-1) circle (0.05 cm);
				\filldraw (4,-1) circle (0.05 cm);
				\draw (0,0) -- (4,0);
				\draw (0,0) -- (0,-1);
				\draw (2,0) -- (2,-1);
				\draw (4,0) -- (4,-1);
                \draw (1,-1) -- (1,0);
                \draw (3,-1) -- (3,0);
				\draw (-0.5,-.7) -- (4.5,-.7);

                \draw (5.5, -.7) -- (10.5,-.7);
				\filldraw (6,0) circle (0.05 cm)node[above]{$u_1$};
				\filldraw (7,0) circle (0.05 cm)node[above]{$u_2$};
				\filldraw (8,0) circle (0.05 cm)node[above]{$u_3$};
				\filldraw (9,0) circle (0.05 cm)node[above]{$u_4$};
				\filldraw (10,0) circle (0.05 cm)node[above]{$u_5$};
				\filldraw (6,-1) circle (0.05 cm);
				\filldraw (8,-1) circle (0.05 cm);
				\filldraw (10,-1) circle (0.05 cm);
                \filldraw (7,-1) circle (0.05 cm);
                \filldraw (9,-1) circle (0.05 cm);
                \draw (6,0) --  (6, -1);
				\draw (8,0) --  (8, -1);
				\draw (10,0)  -- (10, -1);
				\draw (6,0)  -- (7,0);
				\draw (7,0)  -- (8,0);
				\draw (8,0)  -- (9,0);
				\draw (9,0)  -- (10,0);
                \draw (7,0) -- (7,-1);
                \draw (9,0) -- (9,-1);

                \filldraw (5.75, -1.5) circle (0.05 cm);
                \filldraw (6.25, -1.5) circle (0.05 cm);
                \filldraw (7.75, -1.5) circle (0.05 cm);
                \filldraw (8.25, -1.5) circle (0.05 cm);
                \filldraw (9.75, -1.5) circle (0.05 cm);
                \filldraw (10.25, -1.5) circle (0.05 cm);
                \draw (5.75,-1.5) -- (6,-1) -- (6.25,-1.5);
                \draw (7.75,-1.5) -- (8,-1) -- (8.25,-1.5);
                \draw (9.75,-1.5) -- (10,-1) -- (10.25,-1.5);

\end{tikzpicture}
\end{center}

\caption{$T_{K_2}(5,9)$ and $T_{S_3}(5,9)$}\label{st trees}

\end{figure}
 
	Observe that the root set of $T_{S_t}(a,b)$ is a disjoint union of isolated vertices and copies of $S_{t-1}$. Forbidding $T_{S_{t-1}}(a,b)^\ell$ will limit the number of choices for the root by induction, and hence will ensure that if there are many copies of $T_{S_t}(a,b)$, then we will obtain some power of $T_{S_t}(a,b)$.
	
	Now we check that $T_{S_t}(a,b)$ will obtain the lower bounds we desire.
	
	\begin{lemma}
		The rooted graph $T_{S_t}(a,b)$ is balanced with rooted density $b/a$.
	\end{lemma}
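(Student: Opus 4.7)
The plan is to reduce this lemma to the corresponding statement for $T_{K_2}(a,b)$ in Proposition~\ref{prop::balanced:BC}. The key observation is that $T_{S_t}(a,b)$ is obtained from $T_{K_2}(a,b)$ by attaching $t-1$ new root leaves to each original root vertex adjacent to a $u_i$ with $i$ odd, and every edge added in this process has both of its endpoints in the root set. In particular, the set of non-root vertices of $T_{S_t}(a,b)$ coincides exactly with the non-root vertex set of $T_{K_2}(a,b)$, namely the spine $\{u_1, \dots, u_a\}$.

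First I would fix an arbitrary $S \subseteq V(T_{S_t}(a,b)) \setminus R = \{u_1, \dots, u_a\}$. Since every edge added in the construction has both endpoints in $R$, no such edge is incident to any vertex of $S$. Therefore the edges incident to $S$ in $T_{S_t}(a,b)$ are exactly the edges incident to $S$ in $T_{K_2}(a,b)$, so $e_{T_{S_t}}(S) = e_{T_{K_2}}(S)$ and consequently $d_{T_{S_t}}(S) = d_{T_{K_2}}(S)$.

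Taking $S = \{u_1, \dots, u_a\}$ gives $d(T_{S_t}(a,b)) = d(T_{K_2}(a,b)) = b/a$ by Proposition~\ref{prop::balanced:BC}, which establishes the stated rooted density. For the balanced condition, Proposition~\ref{prop::balanced:BC} tells us that for every nonempty $S \subseteq V(T_{K_2}(a,b)) \setminus R$ we have $d_{T_{K_2}}(S) \geq b/a$, and by the equality of densities shown above the same lower bound holds in $T_{S_t}(a,b)$. Hence the minimum of $d_{T_{S_t}}(S)$ over $S \subseteq V(T_{S_t}(a,b)) \setminus R$ is attained at the full non-root set, as required. There is no genuine obstacle here: because every modification happens strictly inside the root set, the argument reduces to straightforward bookkeeping on top of Proposition~\ref{prop::balanced:BC}.
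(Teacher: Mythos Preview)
Your proof is correct and follows essentially the same approach as the paper: both observe that passing from $T_{K_2}(a,b)$ to $T_{S_t}(a,b)$ adds no non-root vertices and no edges incident to the spine, so the rooted density of every subset of non-root vertices is unchanged and the result follows from Proposition~\ref{prop::balanced:BC}. The paper simply states this in one sentence, while you spell out the bookkeeping explicitly.
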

	
	\begin{proof}
		Since we have not added any spine vertices or edges incident to the spine of $T_{K_2}(a,b)$ to obtain $T_{S_t}(a,b)$, the balancedness and rooted density follow immediately.
	\end{proof}
	
	The following lemma is a helpful pruning step that we use to count the number of copies of $T_{S_t}(a,b)$ in a graph with many copies of $S_t$. The perhaps strange parameterization of this lemma is meant to evoke its later use.
	
	\begin{lemma}\label{lem::star subgraph}
		Let $G$ be an $n$-vertex graph containing at least $3C n^{t+1} p^t$ copies of $S_t$. Then there exists a nonempty collection $\cH$ of copies of $S_t$ and collections $\cL_1$ and $\cL_2$ of vertices such that
		\begin{enumerate}
			\item every copy of $S_t$ in $\cH$ has its center in $\cL_1$ and its leaves in $\cL_2$, and $|\cH| \geq C n^{t+1} p^t$,
			\item every vertex in $\cL_1$ is the center of at least $C n^t p^t$ copies of $S_t$ in $\cH$, and
			\item every vertex in $\cL_2$ is the leaf of at least $C n^t p^t$ copies of $S_t$ in $\cH$.
		\end{enumerate}
	\end{lemma}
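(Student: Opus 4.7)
The plan is to use an iterative pruning (or \emph{cleaning}) argument on the set of copies of $S_t$. Initialize $\cL_1 = \cL_2 = V(G)$ and let $\cH$ be the set of all copies of $S_t$ in $G$ (so initially $|\cH| \geq 3C n^{t+1} p^t$). We then repeatedly do the following: while there exists $v \in \cL_1$ which is the center of fewer than $C n^t p^t$ copies of $S_t$ currently in $\cH$, remove $v$ from $\cL_1$ and delete from $\cH$ every copy centered at $v$; similarly, while there exists $v \in \cL_2$ which is a leaf of fewer than $C n^t p^t$ copies in $\cH$, remove $v$ from $\cL_2$ and delete from $\cH$ every copy having $v$ as a leaf.

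Since $V(G)$ is finite, this process terminates. At termination, condition (2) holds for $\cL_1$ and condition (3) holds for $\cL_2$ by construction, and condition (1) on membership is preserved at every step, since we only delete copies whose center has been removed from $\cL_1$ or which contain a removed leaf-vertex from $\cL_2$. The only nontrivial point is to verify that $|\cH| \geq C n^{t+1} p^t$ at the end (in particular that $\cH$ is nonempty).

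For this we bound the total damage. Each single vertex-removal from $\cL_1$ or $\cL_2$ deletes at most $C n^t p^t$ copies of $S_t$ from $\cH$, by the criterion for removal. Since each vertex of $V(G)$ can be removed at most once from $\cL_1$ and at most once from $\cL_2$, there are at most $2n$ vertex-removals in total. Therefore the total number of copies of $S_t$ deleted from $\cH$ throughout the entire process is at most $2n \cdot C n^t p^t = 2C n^{t+1} p^t$, and so the final collection satisfies $|\cH| \geq 3C n^{t+1} p^t - 2C n^{t+1} p^t = C n^{t+1} p^t$, as required.

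There is no real obstacle here beyond bookkeeping: the only thing to be careful about is that deletions in $\cL_2$ can retroactively reduce the centered-count of some $u \in \cL_1$, but the while-loop formulation handles this automatically, and the global accounting above bounds the cumulative loss regardless of the order of deletions.
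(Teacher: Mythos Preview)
Your proof is correct and essentially identical to the paper's own argument: the same iterative pruning of $\cL_1$ and $\cL_2$, the same $2n \cdot C n^t p^t$ bound on total deletions, and the same final count $|\cH| \geq C n^{t+1} p^t$.
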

	
	\begin{proof} We will find $\cH$, $\cL_1$ and $\cL_2$ via a simple algorithm. Initialize $\cH$ to be the set of all copies of $S_t$ in $G$ and $\cL_1 = \cL_2 = V(G)$. Iteratively remove $v \in \cL_1$ if $v$ is the center of fewer than $C n^t p^t$ copies of $S_t\in \cH$, and remove $u \in \cL_2$ if $u$ is the leaf of fewer than $C n^t p^t$ copies of $S_t\in \cH$; upon removal of these vertices, remove all stars in $\cH$ which have $v$ as its center or $u$ as a leaf. Repeat until this process terminates. We remove at most $2n \cdot C n^t p^t$ copies of $S_t$ from $\cH$, so $\cH$ has size at least $C n^{t+1} p^t$ at the end of this process, and $\cH$, $\cL_1$, $\cL_2$ have the desired properties.
	\end{proof}
	
	\begin{lemma}\label{lem:starupper}
		Let $b \geq ta$ be given, and let $p = n^{-a/b}$. Then
		\[ 
		\ex(n, S_t, T_{S_t}(a,b)^\ell \cup T_{S_{t-1}}(a,b)^\ell \cup \cdots \cup T_{S_1}(a,b)^\ell) = O_\ell(n^{t+1} p^t) .
		\]
	\end{lemma}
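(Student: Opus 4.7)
The plan is to induct on $t$. For $t = 1$, $T_{S_1}(a,b) = T_{K_2}(a,b)$ and the statement reduces to the Bukh--Conlon upper bound, which I would prove by greedily embedding $T_{K_2}(a,b)$ into a high-min-degree subgraph of $G$ and applying pigeonhole on the root embeddings.

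For the inductive step, fix $t \geq 2$ and assume the lemma for $t - 1$. Let $G$ be $\cF$-free and assume for contradiction that $G$ contains at least $3Cn^{t+1}p^t$ copies of $S_t$ for a sufficiently large constant $C$. I would first apply the inductive hypothesis to $\bigcup_{i=1}^{t-1} T_{S_i}(a,b)^\ell$ to bound the number of copies of $S_{t-1}$ in $G$ by $C'n^tp^{t-1}$, then apply Lemma~\ref{lem::star subgraph} to obtain $\cH, \cL_1, \cL_2$ with the stated properties. Since each $c \in \cL_1$ centers at least $Cn^tp^t$ copies of $S_t$ in $\cH$ (all $t$ leaves chosen from $N(c) \cap \cL_2$), I derive $|N(c) \cap \cL_2| \geq (Ct!)^{1/t}\,np$ for each $c \in \cL_1$.

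Writing $F = T_{S_t}(a,b)$, the bulk of the proof is a supersaturation-style count establishing $\#_G(F) \geq c_1 \cdot n^{|V(F)|}p^{|E(F)|}$. The intended embedding scheme places the spine $u_1,\ldots,u_a$ alternately into $\cL_2$ (odd $i$) and $\cL_1$ (even $i$), in agreement with the bipartition of $F$ as a tree; at each sequence position $u_i$ with odd $i$, I attach an $S_t$-gadget by choosing an $S_t \in \cH$ with $v_i$ as a leaf (its center supplies the hook, its remaining $t-1$ leaves supply the extras); at each even-$i$ sequence position, I attach a single leaf chosen from $N(v_i) \cap \cL_2$. Injectivity corrections are absorbed by taking $C$ (and hence $np^t$) large. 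The root structure $F[R]$ decomposes as $s$ disjoint copies of $S_{t-1}$ (one per odd-indexed hook in the defining sequence of $T_{K_2}(a,b)$) together with $b-a+1-s$ isolated vertices, so by induction $\#_G(F[R]) \leq (C'n^tp^{t-1})^s \cdot n^{b-a+1-s} = (C')^s n^{b-a+1+s(t-1)}p^{s(t-1)}$. Dividing these counts gives
\[
\frac{\#_G(F)}{\#_G(F[R])} \;\geq\; \frac{c_1}{(C')^s} \cdot n^a p^b \;=\; \frac{c_1}{(C')^s},
\]
using $p = n^{-a/b}$. Choosing $C$ large enough that $c_1/(C')^s \geq \ell$, a pigeonhole argument produces a root embedding extending to $\ell$ distinct copies of $F$, i.e., a copy of $T_{S_t}(a,b)^\ell \subseteq \cF$, contradicting $\cF$-freeness.

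The hardest step will be establishing the supersaturation bound $\#_G(F) \geq c_1 \cdot n^{|V(F)|}p^{|E(F)|}$. A naive product-of-degrees greedy count for the alternating-bipartite spine gives only $\Omega(n^{|V(F)|}p^{|E(F)|+(t-1)(a-1)/2})$, short by a polynomial factor, because the general lower bound on $|N(v) \cap \cL_1|$ for $v \in \cL_2$ is only $\Omega(np^t)$ rather than the $\Omega(np)$ we have on $|N(c) \cap \cL_2|$ for $c \in \cL_1$. I expect closing this gap to require either additional pruning (keeping only $\cL_2$-vertices whose $\cL_1$-degree is $\Omega(np)$), a weighted counting argument that averages over gadget positions along the spine, or an independent maximum-degree bound (noting that $\#S_t \leq (\Delta/t)\,\#S_{t-1}$, so if the other forbidden family members force $\Delta = O(np)$ the proof closes immediately).
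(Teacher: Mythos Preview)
Your overall framework---induction on $t$, Lemma~\ref{lem::star subgraph}, a supersaturation count for $F = T_{S_t}(a,b)$, and pigeonhole on root embeddings---matches the paper's exactly, and you have correctly isolated the one genuine difficulty: advancing the spine from an odd-indexed $v_i \in \cL_2$ to the even-indexed $v_{i+1} \in \cL_1$, where the only information about a vertex of $\cL_2$ is that it is a \emph{leaf} of many stars in $\cH$, not that it has many neighbors in $\cL_1$.

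The paper closes this gap directly, without any of your three proposed fixes. The idea you are missing is that ``advance the spine'' and ``embed the root leaves of the next even spine vertex'' should not be separate steps. At an odd-indexed $v_i$ you already plan to pick stars in $\cH$ having $v_i$ as a leaf, one per root gadget hanging off $u_i$. Pick \emph{one additional} such star and use it differently: declare its center to be $v_{i+1}$, declare one of its other leaves to be $v_{i+2}$, and let its remaining $t-2$ leaves serve as root leaves of $u_{i+1}$. This single star, chosen in $\Omega(n^t p^t)$ ways, accounts for $t$ of the edges of $F$ incident to $u_{i+1}$; the remaining $d_{T_{K_2}}(u_{i+1}) - t$ root leaves of $u_{i+1}$ are then filled in one edge at a time using the bound $|N(v_{i+1}) \cap \cL_2| \geq C^{1/t} np$ that you already derived for centers in $\cL_1$. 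This is exactly where the hypothesis $b \geq ta$ enters: by Proposition~\ref{prop::balanced:BC} every spine vertex of $T_{K_2}(a,b)$ has degree at least $b/a \geq t$, so $d_{T_{K_2}}(u_{i+1}) - t \geq 0$ and no edge of $F$ is left unaccounted for. With this bookkeeping every edge of $F$ beyond the initial star contributes a factor of order $np$ to the count, yielding the required $\Omega(n^{|V(F)|} p^{|E(F)|})$ copies of $F$; the rest of your argument then goes through unchanged.
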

	
	\begin{proof}
		For ease of notation, let $F$ be a copy of $T_{S_t}(a,b)$, and let $R \subseteq V(F)$ be the root set of $F$. Let $G$ be a $T_{S_i}(a,b)^\ell$-free graph for every $i \in [t-1]$, and assume to the contrary that $G$ contains at least $3C n^{t+1} p^t$ copies of $S_t$, where $C$ is sufficiently large based on $F$. Our goal is to show that $G$ has more than $\ell$ times as many copies of $F$ as it does copies of $R$, and hence contains some graph from $F^\ell$ by the pigeonhole principle. Since $F[R]$ consists of a disjoint union of copies of $S_{t-1}$ and isolated vertices, we have an upper bound on the number of copies of $F[R]$ in $G$ given by induction. To obtain a lower bound on the number of copies of $F$ in $G$, we use Lemma~\ref{lem::star subgraph}.
		
		Let $\cH$, $\cL_1$, and $\cL_2$ be as in Lemma~\ref{lem::star subgraph} applied to $G$. We construct $F$ from `left to right,' keeping track of the number of choices for each stage of the construction. Recall that the non-root vertices of $F$, which we call the spine, is a path $u_1, \dots, u_a$; the copies of these vertices in $G$ will be called $v_1, \dots, v_a$. We begin with a copy of $S_t$ in $\cH$, of which there are at least $C n^{t+1} p^t$ choices, and we specify one of the leaves as $v_1$. Since $v_1 \in \cL_2$, there are at least $C n^t p^t$ copies of $S_t$ in $\cH$ which have $v_1$ as a leaf. 
		
		We claim we can find $d(u_1)-1$ copies of $S_t\in \mathcal{H}$ containing $v_1$ as a leaf, which are pairwise disjoint aside from $v_1$. Indeed, choosing one such star at a time, if we have chosen $k<d(u_1)-1$ stars, we have at least $Cn^{t}p^t-k$ possible choices for the next star, and at most $|V(F)|^2n^{t-1}$ total stars in $G$ contain two or more vertices in previously chosen stars. This gives us at least
		\[
		Cn^tp^t-k-|V(F)|^2n^{t-1}
		\]
		choices for the next star. Since $p=n^{-a/b}$ and $b\geq ta$, $C$ can be chosen large enough such that $Cn^tp^t-k-|V(F)|^2n^{t-1}\geq \frac{C}{2}n^tp^t$. The total number of choices of how to build this part of $F$ is then at least
		\[
		C n^{t+1} p^t (Cn^t p^t/2)^{d(u_1)-1}.
		\]
		
		To continue building $F$, choose one of the stars with $v_1$ as a leaf and call its center $v_2$ and another of its leaves $v_3$. Since $v_2 \in \cL_1$, we have that $v_2$ is the center of at least $C n^t p^t$ copies of $S_t$ in $\cH$. In particular, the degree of $v_2$ in $G$ is at least $C^{1/t} np > 2|V(F)|$, and hence we can find $d(u_2) - t$ edges incident to $v_2$ but otherwise disjoint from everything else we have found so far. Moreover, the number of choices of these edges is at least $(C^{1/t}np/2)^{d(u_2)-t}$.
		
		We proceed in the same way down the rest of the spine of $F$; when $i$ is odd, we use the fact that $v_i\in \mathcal{L}_2$ to find many stars with $v_i$ as a leaf, and when $i$ is even, we use the fact that $v_i\in \mathcal{L}_1$ to find many edges incident with $v_i$. Critically, we must have that for even $i$, $d(u_i) \geq t$, which is guaranteed by Proposition~\ref{prop::balanced:BC} and the assumption that $b \geq ta$. Observe that aside from the initial copy of $S_t$, each edge of $F$ that we constructed contributed a factor of at least $C^{1/t} n p / 2$ to our count of the number of ways to build $F$; the initial copy of $S_t$ contributed an additional factor of $n$. Since $F$ is a tree, the number of ways to build $F$ is at least
		\[ 
		n (C^{1/t} np/2)^{|E(F)|} = (C^{1/t}/2)^{|E(F)|} n^{|V(F)|} p^{|E(F)|}.
		\]
		This slightly overcounts the number of copies of $F$, as one copy of $F$ might be built in more than one way. However, one copy of $F$ could have come from at most $|V(F)|^{|V(F)|}$ different ways of building $F$ described above, so we get that the number of copies of $F$ is at least 
		\[
		\frac{(C^{1/t}/2)^{|E(F)|}}{|V(F)|^{|V(F)|}} n^{|V(F)|} p^{|E(F)|}.
		\]
		
		By induction, since $G$ is $T_{S_i}(a,b)^\ell$-free graph for every $i \in [t-1]$, the number of copies of $S_{t-1}$ in $G$ is at most $O_\ell(n^t p^{t-1})$. Since $F[R]$ consists of vertex disjoint copies of $S_{t-1}$ and isolated vertices, the number of copies of $F[R]$ in $G$ is at most an $O_\ell(n^t p^{t-1})$ factor for each copy of $S_{t-1}$ in $F[R]$, and an $n$ factor for each isolated vertex in $F[R]$. In total, the number of copies of $F[R]$ in $G$ is at most
		\[ 
		O_\ell(n^{|R|} p^{E(F[R])}).
		\]
		Let $C'=C'(\ell)$ be a constant such that the number of copies of $F[R]$ in $G$ is at most $C'n^{|R|}p^{|E(R)|}$. Then by the pigeonhole principle, we have that there exists a root set $R^*\subseteq V(G)$ such that the number of copies of $F$ with root set $R^*$ is at least 
		\[
		\left\lfloor\frac{(C^{1/t}/2)^{|E(F)|}}{|V(F)|^{|V(F)|}} \frac{n^{|V(F)|} p^{|E(F)|}}{C'n^{|R|}p^{|E(F[R])|}}\right\rfloor=\left\lfloor\frac{(C^{1/t}/2)^{|E(F)|}}{C'|V(F)|^{|V(F)|}} n^{|V(F)|-|R|} p^{|E(F)|-|E(F[R])|}\right\rfloor.
		\]
		
		Recall from Construction~\ref{construction stars} that $|V(F)|-|R|=a$, while $|E(F)|-|E(F[R])|=b$, and recall that $p=n^{-a/b}$ so the above expression becomes
		\[
		\left\lfloor\frac{(C^{1/t}/2)^{|E(F)|}}{C'|V(F)|^{|V(F)|}} n^{a}p^{b}\right\rfloor=\left\lfloor\frac{(C^{1/t}/2)^{|E(F)|}}{C'|V(F)|^{|V(F)|}}\right\rfloor.
		\]
		Taking $C$ sufficiently large compared to $F$ and $\ell$ to make the above expression at least $\ell$ yields that the number of copies of $F$ with the common root set $R^*$ is at least $\ell$, so we a copy of a graph from $F^\ell$.
	\end{proof}
	
	Theorem~\ref{thm::starsrealizable} then follows from the general lower bound in Theorem~\ref{thm::lowerbound} and the upper bound in Lemma~\ref{lem:starupper}.

	\section{Upper bound tools for \texorpdfstring{$K_t$}{2}}\label{sec::tools}
	
	In this section we give several tools which enable us to find powers of rooted $K_t$-trees in graphs with many $K_t$'s.
	
	A \emph{$t$-complex} $\cG$ is a collection of nonempty sets of size at most $t$ closed under the subset relation (aside from the empty set).\footnote{Note that a $t$-complex is just an abstract simplicial complex of dimension $t-1$. Because of this geometrically motivated, but inconvenient, $-1$ in the definition of dimension of an abstract simplicial complex, we use this nonstandard name $t$-complex.} We denote by $\cG_i$ the \emph{$i^{\text{th}}$ level of $\cG$}, the collection of all sets of $\cG$ of size $i$. The \emph{ground set} of $\cG$ is the set of all elements which appear in sets of $\cG$, denoted $V(\cG)$. That is, $V(\cG) = \{ i : \{i\} \in \cG_1\}$. The \emph{degree} of $K \in \cG$ is the number of sets of $\cG_t$ which contain $K$, denoted $\deg_\cG(K)$.
	
	For a graph $G$, the collection of (the vertex sets of) all cliques of order at most $t$ forms a $t$-complex. In Section~\ref{sec::tools:subgraph}, we show that this $t$-complex has a sub-$t$-complex taking the following form, assuming $G$ has many copies of $K_t$ and few copies of $K_i$ for $i<t$.
	
	\begin{definition}\label{def::builder:weak}
		Let $t,s,\ell\in\mathbb{N}$ with $s < t$ and $\vec{B}\in \mathbb{N}^s$. We say that a $t$-complex $\cG$ is a $(t,s,\ell,\vec{B})$-weak-builder if the following holds:
		\begin{enumerate}
			\item for every $1\leq i\leq s$, for every $K \in \cG_i$, $\deg_\cG(K) \geq B_i$,
			\item for every $K \in \cG_{s+1}$, $\deg_\cG(K) \leq \ell$.
		\end{enumerate}
	\end{definition}
	
	In Section~\ref{sec::tools:subgraph}, we show that a weak-builder also has the following strengthened form, albeit with a weakening of the parameters involved. For a $t$-complex $\cG$ and $S \subseteq V(\cG)$, let $\cG \setminus S = \{ K \setminus S : K \in \cG, K\not\subseteq S \}$.
	
	\begin{definition}\label{def::builder:strong}
		Let $t,s,\ell,r\in\mathbb{N}$ with $s < t$ and $\vec{B}\in \mathbb{N}^s$. We say that a $t$-complex $\cG$ is a $(t,s,\ell,r,\vec{B})$-strong-builder if for every $S \subseteq V(\cG)$ with $|S| \leq r$, $\cG \setminus S$ is a $(t,s,\ell,\vec{B})$-weak-builder.
	\end{definition}
	
	This form allows us to build $K_t$-trees, since we need to assure that each new copy of $K_t$ intersects appropriately with the $K_t$-tree already formed.
	
	\subsection{Finding builders}\label{sec::tools:subgraph}
	
	In our first step towards finding builders in a clique complex of a graph, we prune cliques of low degree, obtaining a large sub-complex. This almost gives a weak-builder, except for the max degree assumption (item 2 of Definition~\ref{def::builder:weak}), which requires another assumption on the graph.
	
	\begin{lemma}\label{lem::subgraph}
		Let $c, p \in \mathbb{R}^+$, and let $G$ be an $n$-vertex graph with at least $tn^t p^{\binom{t}{2}}$ copies of $K_t$, and at most $n^{i} p^{\binom{i}{2}}$ copies of $K_i$ for all $i<t$. Let $\cG$ be the $t$-complex consisting of all cliques in $G$ on at most $t$ vertices. Then there exists a $t$-complex $\cG' \subseteq \cG$ in which for every $i<t$, for every $K \in \cG'_i$, we have $\deg_{\cG'}(K) \geq n^{t-i} p^{\binom{t}{2}-\binom{i}{2}}$, and $|\mathcal{G}'_t|\geq n^tp^{\binom{t}{2}}$.
	\end{lemma}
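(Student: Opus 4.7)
\medskip

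\noindent\textbf{Proof proposal.} The plan is a standard iterative pruning argument: starting from $\cG' = \cG$, repeatedly remove any clique whose degree (in the current sub-complex) is too small, and bound the damage to $\cG'_t$ by a simple counting argument that leverages the hypothesis on the number of smaller cliques in $G$.

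More precisely, I would set $\cG' := \cG$ and run the following procedure. While there exists some $i < t$ and some $K \in \cG'_i$ with $\deg_{\cG'}(K) < n^{t-i} p^{\binom{t}{2}-\binom{i}{2}}$, delete $K$ from $\cG'$ together with every $K' \in \cG'$ that contains $K$ (this keeps $\cG'$ closed under subsets). This process terminates because $\cG'$ strictly shrinks at each step, and when it terminates the first conclusion of the lemma holds by construction.

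The only remaining task is to certify that $|\cG'_t| \geq n^t p^{\binom{t}{2}}$ at the end. The key observation is to bound, separately for each $1 \leq i \leq t-1$, the total number of elements of $\cG_t$ removed because of pruning steps applied to cliques of size $i$. At the moment we prune some $K \in \cG'_i$, we lose at most $n^{t-i} p^{\binom{t}{2}-\binom{i}{2}}$ sets of $\cG'_t$ (namely the current supersets of $K$ of size $t$). Every $i$-clique of $G$ can be pruned at most once across the whole process, and by hypothesis there are at most $n^i p^{\binom{i}{2}}$ such cliques. Multiplying, the total number of $K_t$-cliques removed due to level $i$ is at most
\[
n^i p^{\binom{i}{2}} \cdot n^{t-i} p^{\binom{t}{2}-\binom{i}{2}} \;=\; n^t p^{\binom{t}{2}}.
\]
Summing over $i = 1, \dots, t-1$ gives at most $(t-1) n^t p^{\binom{t}{2}}$ cliques removed from level $t$, and since we began with at least $t n^t p^{\binom{t}{2}}$ copies of $K_t$, we end with at least $n^t p^{\binom{t}{2}}$, as required.

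I do not anticipate a serious obstacle: the hypothesis $\ex(n, K_i, \cdot) \leq n^i p^{\binom{i}{2}}$ for $i<t$ is precisely calibrated so that the total pruning loss at each level equals $n^t p^{\binom{t}{2}}$, and the factor $t$ in the initial count absorbs the $(t-1)$-fold sum with room to spare. The one thing to be careful about is the bookkeeping that a given $i$-clique contributes to level-$i$ losses at most once (even though its degree may have shrunk between initial consideration and actual removal, we only use the upper bound on degree at the time of pruning), and that removal of a $K \in \cG'_i$ does not inadvertently violate the complex property — which is why we simultaneously remove all supersets of $K$ from $\cG'$.
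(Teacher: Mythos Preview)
Your proposal is correct and follows essentially the same approach as the paper: iteratively prune low-degree cliques (together with their supersets), and bound the total loss at level $t$ by summing $n^i p^{\binom{i}{2}} \cdot n^{t-i} p^{\binom{t}{2}-\binom{i}{2}} = n^t p^{\binom{t}{2}}$ over $i=1,\dots,t-1$. Your write-up is in fact more careful than the paper's about the bookkeeping (each $i$-clique pruned at most once, preservation of the complex property), but the argument is the same.
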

	
	\begin{proof}
		Iteratively remove $K \in \cG$ if $|K|<t$ and $\deg(K)$ is too small; upon removing $K$, remove all sets in the $t$-complex which contain $K$ as well. Let $\cG'$ be the $t$-complex at the end of this process. In total, at most
		\[ 
		\sum_{i<t} n^i p^{\binom{i}{2}} n^{t-i} p^{\binom{t}{2}-\binom{i}{2}} =(t-1)n^t p^{\binom{t}{2}}
		\]
		sets of $\cG_t$ are removed, so
		\[
		|\mathcal{G}'_t|\geq tn^t p^{\binom{t}{2}}-(t-1)n^t p^{\binom{t}{2}}=n^tp^{\binom{t}{2}}.
		\]
	\end{proof}
	
	Next we show how weak-builders are actually strong-builders with a weakening of the parameters involved.
	
	\begin{lemma}\label{lem::codegree}
		Let $\cG$ be a $(t,s,\ell,\vec{B})$-weak-builder satisfying $\displaystyle\frac{B_s-\ell}{\binom{t}{s} \ell} \geq \frac{1}{1-2^{-1/r}}$. Then $\cG$ is a $(t,s,\ell,r,\frac{1}{2}\vec{B})$-strong-builder.
	\end{lemma}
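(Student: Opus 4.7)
The plan is to verify the two weak-builder conditions for $\cG \setminus S$ whenever $|S| \leq r$. Condition 2 of Definition~\ref{def::builder:weak} is automatic: for $K \in (\cG \setminus S)_{s+1}$, every $t$-superset of $K$ in $\cG \setminus S$ is also a $t$-superset of $K$ in $\cG$, so $\deg_{\cG \setminus S}(K) \leq \deg_\cG(K) \leq \ell$. The real work is showing $\deg_{\cG \setminus S}(K) \geq B_i/2$ for every $K \in (\cG \setminus S)_i$ and $1 \leq i \leq s$.

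The level $i = s$ case is handled by a direct union bound. For $K \in \cG_s$ with $K \cap S = \emptyset$,
\[
\deg_{\cG \setminus S}(K) \geq \deg_\cG(K) - \sum_{v \in S} \deg_\cG(K \cup \{v\}) \geq B_s - r\ell,
\]
since each $K \cup \{v\} \in \cG_{s+1}$ has degree at most $\ell$. To obtain $\deg_{\cG \setminus S}(K) \geq B_s/2$, it suffices to verify $B_s \geq 2r\ell$. The hypothesis gives $B_s \geq \ell + \binom{t}{s}\ell/(1-2^{-1/r})$; since $1 - 2^{-1/r} = 1 - e^{-(\ln 2)/r} \leq (\ln 2)/r$, we have $1/(1-2^{-1/r}) \geq r/\ln 2 > r$, and combined with $\binom{t}{s} \geq 2$ (which holds for $t \geq 2$ and $1 \leq s \leq t-1$), this yields $B_s > 2r\ell$ with slack to spare.

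For the levels $i < s$, I would argue by reverse induction on $i$, using the double-counting identity
\[
(t-i)\deg_{\cG \setminus S}(K) = \sum_{\substack{v \in V(\cG) \setminus S \\ K \cup \{v\} \in \cG}} \deg_{\cG \setminus S}(K \cup \{v\}),
\]
which holds because each $L \in \cG_t$ with $K \subseteq L$ and $L \cap S = \emptyset$ contributes $t - i$ extensions $v \in L \setminus K$, all automatically disjoint from $S$. The inductive hypothesis bounds each summand by $B_{i+1}/2$, reducing the task to lower-bounding the number $N_S(K)$ of valid extensions by roughly $(t-i) B_i/B_{i+1}$. Such a bound should follow by using the companion identity $(t-i)\deg_\cG(K) = \sum_v \deg_\cG(K \cup \{v\})$ in $\cG$, controlling $\deg_\cG(K \cup \{v\})$ by iterating the identity up to level $s+1$ where the $\ell$-bound takes hold, and then discarding the at most $r$ vertices in $S$.

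The main obstacle is precisely the $i < s$ step: the hypothesis directly constrains only the level-$s$ ratio, so propagating the bound to smaller $i$ requires either a careful iteration of the $\ell$-bound through intermediate levels or a multi-level simultaneous induction. The $\binom{t}{s}$ factor in the hypothesis is present precisely to provide the combinatorial slack needed to absorb these propagations together with the $r$ forbidden vertices at every level.
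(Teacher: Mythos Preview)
Your treatment of condition~2 and of the top level $i=s$ is correct: for $K\in\cG_s$ disjoint from $S$ the union bound $\deg_{\cG\setminus S}(K)\ge B_s-r\ell$ holds, and your numerical verification that $B_s\ge 2r\ell$ follows from the hypothesis as you say.

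The genuine gap is at levels $i<s$, and the reverse induction you sketch does not close it. To make that induction go through you would need $N_S(K)\ge (t-i)B_i/B_{i+1}$, but the hypothesis of the lemma involves only $B_s$ and places \emph{no constraint whatsoever} on the ratios $B_i/B_{i+1}$ for $i<s$; the $B_i$ are arbitrary lower bounds. Your proposed fix of iterating the companion identity up to level $s+1$ yields only $\deg_\cG(K\cup\{v\})\le \ell\cdot(\text{number of $(s{+}1)$-supersets of }K\cup\{v\})$, and that count can be enormous for a single $v$, so no useful lower bound on the \emph{number} of extensions of $K$ follows. The paper's proof sidesteps the intermediate levels entirely by deleting the vertices of $S$ one at a time, showing that each single deletion degrades the full vector $\vec B$ by a factor $1-\eta$ with $\eta=1-2^{-1/r}$, so that $r$ deletions cost $(1-\eta)^r=\tfrac12$. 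The single-vertex step uses a dichotomy for every $K\in\cG_i$: either $\deg_\cG(K\cup\{v\})\le\eta B_i$, and the loss from removing $v$ is at most $\eta B_i$; or $\deg_\cG(K\cup\{v\})>\eta B_i$, which combined with the level-$(s{+}1)$ cap $\ell$ forces at least $\eta B_i/\ell$ many $s$-sets $K'\supseteq K$ to survive in $\cG\setminus\{v\}$, each still of degree at least $B_s-\ell$, whence $\deg_{\cG\setminus\{v\}}(K)\ge \eta B_i(B_s-\ell)/\bigl(\binom{t}{s}\ell\bigr)\ge B_i$ by the hypothesis. The crucial point is that both branches route exclusively through levels $s$ and $s+1$, which is precisely where the hypothesis lives --- this is the idea your argument is missing.
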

	
	\begin{proof}
		Let $K \in \cG_i$ with $i \leq s$, and let $v \in V(G) \setminus K$. Let $\eta = 1-2^{-1/r}$, so that $(1-\eta)^r = \frac{1}{2}$. If $\deg_\cG(K \cup \{v\}) \leq \eta B_i$, then
		\[ 
		\deg_{\cG \setminus \{v\}}(K) \geq \deg_\cG(K) - \deg_\cG(K \cup \{v\}) \geq (1-\eta)B_i .
		\]
		Otherwise, $K \cup \{v\}$ is contained in at least $\eta B_i/\ell$ sets in $\cG_{s+1}$, since the degree of every set in $\cG_{s+1}$ is at most $\ell$. Removing $v$ from each of these sets in $\cG_{s+1}$, we obtain at least $\eta B_i/\ell$  sets in $(\cG \setminus \{v\})_s$ containing $K$. Then, for each $K'' \in (\cG \setminus \{v\})_t$ containing $K$, there are at most $\binom{t}{s}$ sets $K' \in (\cG \setminus \{v\})_s$ with $K\subseteq K'\subseteq K''$. This gives us that
		
		\[ 
		\deg_{\cG \setminus \{v\}}(K) \geq \frac{1}{\binom{t}{s}} \sum_{K' \in (\cG \setminus \{v\})_s : K \subseteq K'} \deg_{\cG\setminus\{v\}}(K')\geq \frac{1}{\binom{t}{s}} \sum_{K' \in (\cG \setminus \{v\})_s : K \subseteq K'} (\deg_\cG(K') - \ell).
		\]
		Since there are at least $\eta B_i/\ell$ choices for $K'$, we have
		\[ 
		\deg_{\cG \setminus \{v\}}(K) \geq \frac{1}{\binom{t}{s}} \cdot \frac{\eta B_i}{\ell} \cdot (B_s - \ell) \geq B_i>(1-\eta)B_i .
		\]
		
		Thus $\cG \setminus \{v\}$ is a $(t,s,\ell,(1-\eta)\vec{B})$-weak-builder. Iterating, since $(1-\eta)^r \geq \frac{1}{2}$, we have that $\cG$ is a $(t,s,\ell,r,\frac{1}{2}\vec{B})$-strong builder.
	\end{proof}

	\subsection{Building trees}\label{sec::tools:building}
	
	We recall Defintion~\ref{def::Htree} for $H$-trees and specialize it to the case $H = K_t$, adding some additional terminology.
	
	\begin{definition}\label{def::Kttree}
		We say that $T$ is an \emph{$K_t$-tree} if there exists a sequence of subgraphs of $T$ isomorphic to $K_t$, $K^{(1)}\subseteq T, K^{(2)}\subseteq T, \dots, K^{(b)}\subseteq T$, with $V(T)=\bigcup_{i=1}^b V(K^{(i)})$, and with the property that, for every $2\leq i\leq b$, there exists $j_i<i$ such that
		\begin{enumerate}
			\item $V(K^{(i)}) \cap \left(V(K^{(1)})\cup\cdots\cup V(K^{(i-1)})\right) = V(K^{(i)}) \cap V(K^{(j_i)})$, and
			\item $V(K^{(i)}) \cap V(K^{(j_i)})$ is neither empty nor equal to $V(K^{(i)})$.
		\end{enumerate}
		The sequence $(K^{(1)},j_2,K^{(2)},\dots,j_b,K^{(b)})$ will be called a \emph{witness} for $T$. The \emph{glue size} of $(K^{(1)},j_2,K^{(2)},\dots,j_b,K^{(b)})$ is the largest value of $s$ such that there exists some $2 \leq i \leq b$ with $|V(K^{(i)}) \cap V(K^{(j_i)})| = s$. The \emph{type} of $(K^{(1)},j_2,K^{(2)},\dots,j_b,K^{(b)})$ is $\vec{d} \in \mathbb{N}^s$, where the $k^{\text{th}}$ coordinate of $\vec{d}$, $d_k$, is the number of times $|V(K^{(i)}) \cap V(K^{(j_i)})| = k$ for $2 \leq i \leq b$. We say that a sequence $(L^{(1)}, \dots, L^{(b)})$ of sets in a $t$-complex $\mathcal{G}$ is a \emph{copy} of $(K^{(1)},j_2,K^{(2)},\dots,j_b,K^{(b)})$ in $\mathcal{G}$ if there is a bijection between $V(T)$ and $L^{(1)} \cup \cdots \cup L^{(b)}$ taking $V(K^{(i)})$ to $L^{(i)}$ for every $1 \leq i \leq b$.
	\end{definition}
	
	\begin{observation}\label{obs:CT}
		The number of copies of a witness $(K^{(1)},j_2,K^{(2)},\dots,j_b,K^{(b)})$ of a $K_t$-tree $T$ in a clique $t$-complex of a graph $G$ on $|V(T)|$ vertices is finite.
	\end{observation}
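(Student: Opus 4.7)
The observation is essentially immediate from the definitions; my plan is simply to bound the count by a factorial. The key point is that a copy of the witness is entirely encoded by the bijection that witnesses it. Unpacking Definition~\ref{def::Kttree}, a copy of $(K^{(1)}, j_2, K^{(2)}, \ldots, j_b, K^{(b)})$ in the clique $t$-complex $\mathcal{G}$ of $G$ is a tuple $(L^{(1)}, \ldots, L^{(b)})$ of $t$-cliques in $G$ for which there exists a bijection $\phi\colon V(T) \to L^{(1)} \cup \cdots \cup L^{(b)}$ satisfying $\phi(V(K^{(i)})) = L^{(i)}$. The first step is to note that the tuple is determined by $\phi$ via $L^{(i)} = \phi(V(K^{(i)}))$, so the number of copies is at most the number of such bijections $\phi$.

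The second step uses the hypothesis $|V(G)| = |V(T)|$. Because $\phi$ is injective on $V(T)$, its image has cardinality $|V(T)| = |V(G)|$, which forces $L^{(1)} \cup \cdots \cup L^{(b)} = V(G)$. Hence $\phi$ is simply a bijection $V(T) \to V(G)$, of which there are exactly $|V(T)|!$. Therefore the number of copies of the witness in $\mathcal{G}$ is at most $|V(T)|!$, which is finite (and depends only on $T$).

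There is no real obstacle here: the only mild subtlety is that a single tuple $(L^{(1)}, \ldots, L^{(b)})$ may arise from several bijections $\phi$ (for instance, when $T$ admits nontrivial automorphisms that preserve the witness), but this only makes the bound $|V(T)|!$ an overcount, which is irrelevant for finiteness. If a sharper constant were wanted later, one could refine to $|V(T)|!/|\mathrm{Aut}(W)|$ where $\mathrm{Aut}(W)$ is the automorphism group of $T$ preserving $W$, but this is not needed for the stated claim.
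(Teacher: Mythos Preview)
Your argument is correct and is exactly the intended justification: the paper states this as an observation without proof, and the bound of $|V(T)|!$ via the bijection $\phi$ is precisely the reason it is immediate. Your remarks about overcounting due to automorphisms are accurate but, as you note, unnecessary for the claim.
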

	
	\begin{lemma}\label{lemma technical count}
		Let $T$ be a $K_t$-tree with a witness $(K^{(1)},j_2,K^{(2)},\dots,j_b,K^{(b)})$ of glue size $s$ and type $\vec{d}$. Then
		\[
		|V(T)|=t+\sum_{k=1}^s d_k(t-k)
		\]
		and
		\[
		|E(T)|=\binom{t}{2}+\sum_{k=1}^sd_k\left(\binom{t}{2}-\binom{k}{2}\right).
		\]
	\end{lemma}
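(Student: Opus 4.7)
\medskip

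\noindent\textbf{Proof plan.} The plan is to induct on the number $b$ of cliques in the witness sequence. The essential observation is that if we build $T$ one clique at a time in the order prescribed by the witness, then attaching $K^{(i)}$ to the partial union $T_{i-1} := K^{(1)} \cup \cdots \cup K^{(i-1)}$ contributes exactly $t - k_i$ new vertices and $\binom{t}{2} - \binom{k_i}{2}$ new edges, where $k_i := |V(K^{(i)}) \cap V(K^{(j_i)})|$. Summing these contributions over $i = 2, \dots, b$ and regrouping according to the value of $k_i$ then yields the claimed formulas, since by definition of the type, the value $k$ appears as a $k_i$ exactly $d_k$ times.

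\medskip

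\noindent The base case $b = 1$ is immediate: the witness is just $(K^{(1)})$, so $T = K_t$, which has $t$ vertices and $\binom{t}{2}$ edges, and the sums $\sum_k d_k (t-k)$ and $\sum_k d_k (\binom{t}{2} - \binom{k}{2})$ are both empty.

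\medskip

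\noindent For the inductive step, the key identities to verify are
\[
|V(T_i)| = |V(T_{i-1})| + (t - k_i) \qquad \text{and} \qquad |E(T_i)| = |E(T_{i-1})| + \left(\binom{t}{2} - \binom{k_i}{2}\right).
\]
The vertex identity is direct from condition (1) of Definition~\ref{def::Kttree}, which forces $V(K^{(i)}) \cap V(T_{i-1}) = V(K^{(i)}) \cap V(K^{(j_i)})$, a set of size $k_i$. The edge identity uses the fact that $K^{(j_i)}$ is a complete graph on its vertex set, so every pair of vertices in $V(K^{(i)}) \cap V(K^{(j_i)})$ is already an edge of $K^{(j_i)} \subseteq T_{i-1}$; hence the only edges of $K^{(i)}$ not already present in $T_{i-1}$ are the $\binom{t}{2} - \binom{k_i}{2}$ edges of $K^{(i)}$ that have at least one endpoint outside the intersection. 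Condition (2) ensures $1 \leq k_i \leq t-1$, so each such $k_i$ contributes to exactly one coordinate of $\vec{d}$ with $1 \leq k_i \leq s$.

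\medskip

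\noindent There is no real obstacle; the proof is bookkeeping. The only subtle point worth stating carefully is that no edge of $K^{(i)}$ outside the intersection is accidentally already present in $T_{i-1}$ as an edge coming from some $K^{(i')}$ with $i' < i$; this is exactly what condition (1) rules out, since any such edge would require $V(K^{(i)}) \cap V(K^{(i')})$ to contain a vertex outside $V(K^{(j_i)})$, contradicting $V(K^{(i)}) \cap V(T_{i-1}) = V(K^{(i)}) \cap V(K^{(j_i)})$. Once these two identities are in hand, summing over $i$ and grouping by the value of $k_i$ completes the induction.
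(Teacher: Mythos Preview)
Your proposal is correct and follows essentially the same argument as the paper: both compute the incremental contribution of each $K^{(i)}$ to the vertex and edge counts using condition (1) of Definition~\ref{def::Kttree}, then sum and regroup by the intersection size. The only cosmetic difference is that you phrase it as induction on $b$ while the paper writes the same telescoping directly as $|V(T)| = \sum_i |V(K^{(i)}) \setminus V(T_{i-1})|$ and similarly for edges; your explicit remark that no edge of $K^{(i)}$ outside the intersection can already lie in $T_{i-1}$ is exactly the justification behind the paper's step $E(K^{(i)}) \cap E(T_{i-1}) = E(K^{(i)}) \cap E(K^{(j_i)})$.
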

	
	\begin{proof}
		We simply calculate
		\begin{align*}
			|V(T)|=\left|\bigcup_{i=1}^b V(K^{(i)}) \right| &=\sum_{i=1}^b|V(K^{(i)})\setminus (V(K^{(1)})\cup\cdots\cup V(K^{(i-1)}))| \\
			&=\sum_{i=1}^b |V(K^{(i)})|-\left|V(K^{(i)})\cap (V(K^{(1)})\cup\cdots\cup V(K^{(i-1)}))\right|\\
			&=t+\sum_{i=2}^b|V(K^{(i)})|-\left|V(K^{(i)})\cap V(K^{(j_i)})\right|\\
			&=t+\sum_{i=2}^bt-\left|V(K^{(i)})\cap V(K^{(j_i)})\right| \\
			&=t+\sum_{k=1}^s d_k(t-k) .
		\end{align*}
		Similarly, 
		\begin{align*}
			|E(T)|&=\sum_{i=1}^b |E(K^{(i)})|-\left|E(K^{(i)})\cap (E(K^{(1)})\cup\cdots\cup E(K^{(i-1)}))\right|\\
			&=\binom{t}2+\sum_{i=2}^b |E(K^{(i)})|-\left|E(K^{(i)})\cap E(K^{(j_i)})\right| \\
			&=\binom{t}{2}+\sum_{k=1}^s d_k\left(\binom{t}{2}-\binom{k}{2}\right) .\qedhere
		\end{align*}
	\end{proof}
	
	\begin{lemma}\label{lem::building}
		Let $T$ be an $r$-vertex $K_t$-tree with a witness $(K^{(1)},j_2,K^{(2)},\dots,j_b,K^{(b)})$ of type $\vec{d}$ and glue size at most $s$. The number copies of $(K^{(1)},j_2,K^{(2)},\dots,j_b,K^{(b)})$ in a $(t,s,\ell,r,\vec{B})$-strong-builder $\mathcal{G}$ is at least
		\[ 
		|\cG_t| \prod_{k=1}^s B_k^{d_k} 
		\]
	\end{lemma}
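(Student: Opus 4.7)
The plan is to count copies of the witness by a stage-by-stage construction: choose $L^{(1)} \in \mathcal{G}_t$ first, and then iteratively choose $L^{(i)}$ for $i=2,\dots,b$, bounding below the number of valid choices at each stage. Writing $S_{i-1} = L^{(1)} \cup \cdots \cup L^{(i-1)}$ and $k_i = |V(K^{(i)}) \cap V(K^{(j_i)})| \in \{1,\dots,s\}$, the constraint on $L^{(i)}$ in a copy is $L^{(i)} \cap S_{i-1} = L^{(i)} \cap L^{(j_i)}$, a set of size exactly $k_i$.

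Stage $1$ contributes a factor of $|\mathcal{G}_t|$ trivially. For stage $i \geq 2$, fix any valid partial copy $(L^{(1)},\dots,L^{(i-1)})$. Since the intersection pattern of these sets matches that of $V(K^{(1)}),\dots,V(K^{(i-1)})$ under the requisite bijection, we have $|S_{i-1}| = |V(K^{(1)}) \cup \cdots \cup V(K^{(i-1)})| \leq |V(T)| = r$. Pick any $k_i$-subset $K$ of $L^{(j_i)}$; since $\mathcal{G}$ is downward closed and $L^{(j_i)} \in \mathcal{G}_t$, we have $K \in \mathcal{G}_{k_i}$. Moreover $K \cap (S_{i-1} \setminus K) = \emptyset$ and $|S_{i-1} \setminus K| \leq r$, so by the strong-builder hypothesis, $\mathcal{G} \setminus (S_{i-1} \setminus K)$ is a $(t,s,\ell,\vec{B})$-weak-builder, and hence (since $k_i \leq s$)
\[
\deg_{\mathcal{G} \setminus (S_{i-1} \setminus K)}(K) \geq B_{k_i}.
\]
Each top-level set $L$ counted by this degree is an element of $\mathcal{G}_t$ with $K \subseteq L$ and $L \cap (S_{i-1} \setminus K) = \emptyset$, hence $L \cap S_{i-1} = K \subseteq L^{(j_i)}$ with $|L \cap L^{(j_i)}| = k_i$. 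Thus $L$ is a legitimate choice of $L^{(i)}$ extending the partial copy, and distinct such $L$ give distinct extensions. Therefore there are at least $B_{k_i}$ choices at stage $i$.

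Multiplying across all stages and grouping the factors by value of $k_i$ (noting that $d_k$ counts the indices $i \geq 2$ with $k_i = k$) yields at least
\[
|\mathcal{G}_t| \prod_{i=2}^b B_{k_i} \;=\; |\mathcal{G}_t| \prod_{k=1}^{s} B_k^{d_k}
\]
copies, as required. The key technical point is simply verifying that the strong-builder property can be applied at each stage, which comes down to the bound $|S_{i-1} \setminus K| \leq r$; this is automatic because $T$ has only $r$ vertices and partial copies faithfully mirror the intersection pattern of the $V(K^{(i)})$'s. No additional care is needed for injectivity at this step, since each choice of $L^{(i)}$ uniquely determines the extension of the sequence.
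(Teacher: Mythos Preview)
Your argument is correct and follows the same greedy stage-by-stage approach as the paper's (much terser) proof, supplying welcome detail on why the strong-builder property applies at each step. One minor imprecision: at stage $i$ the set $K$ cannot be an \emph{arbitrary} $k_i$-subset of $L^{(j_i)}$; it must be the image of $V(K^{(i)})\cap V(K^{(j_i)})$ under the partial bijection maintained so far, since otherwise the intersection pattern of $L^{(i)}$ with earlier sets $L^{(m)}$ (for $m\neq j_i$ with $V(K^{(m)})\cap V(K^{(i)})\neq\emptyset$) can fail to match that of the witness, and the resulting sequence would not be a copy. This does not affect the bound, as the correct $K$ always exists and your degree estimate applies to it equally well.
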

	
	\begin{proof}
		There are $|\cG_t|$ choices for the image of $K^{(1)}$. Then for $2\leq i\leq b$, by the strong-builder property, there are at least $B_k$ choices for the image of $K^{(i)}$, where $k = |K^{(i)}\cap (K^{(1)}\cup\dots\cup K^{(i-1)})|$. For each $1\leq k\leq s$, as $i$ ranges from $2$ to $b$, there are $d_k$ times in which we get a multiplicative factor of $B_k$, giving us the desired count.
	\end{proof}

	\subsection{Rooting a \texorpdfstring{$K_t$}{2}-tree}\label{sec::tools:rooting}

	\begin{lemma}\label{lem::rooting}
		Let $L, \ell, s, t \in \mathbb{N}$ with $s<t$, $d\in \mathbb{Q}$ with $2d+1 \geq s+t$, and let $(T,R)$ be a rooted $K_t$-tree with glue size at most $s$ and rooted density at most $d$. There exists $c_0 \in \mathbb{R}^+$ such that the following holds for all $c \geq c_0$ and $n$ sufficiently large, with $p := cn^{-1/d}$. Let $G$ be an $n$-vertex graph such that
		\begin{enumerate}
			\item $G$ has at least $tn^t p^{\binom{t}{2}}$ copies of $K_t$,
			\item $G$ has at most $n^i p^{\binom{i}{2}}$ copies of $K_i$ for all $i<t$,
			\item every copy of $K_{s+1}$ in $G$ is in at most $\ell$ copies of $K_t$ in $G$, and
			\item $G$ contains at most $n^{|R|} p^{|E(T[R])|}$ copies of $T[R]$.
		\end{enumerate}
		Then there exists a member of the $L^{\text{th}}$ power of $(T,R)$ in $G$.
	\end{lemma}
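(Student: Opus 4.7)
The plan is to locate a large strong-builder inside the clique $t$-complex of $G$, use it to produce many embeddings of $T$, and then pigeonhole on root images to obtain $L$ copies of $T$ sharing a common root set. First, apply Lemma~\ref{lem::subgraph} to the clique $t$-complex of $G$ (using hypotheses~(1) and~(2)) to obtain a subcomplex $\cG'$ with $|\cG'_t| \geq n^t p^{\binom{t}{2}}$ in which every $K \in \cG'_i$ with $i<t$ has $\deg_{\cG'}(K) \geq B_i := n^{t-i} p^{\binom{t}{2}-\binom{i}{2}}$. Hypothesis~(3) implies that every $K \in \cG'_{s+1}$ has degree at most $\ell$, so $\cG'$ is a $(t,s,\ell,\vec{B})$-weak-builder.

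Next, promote $\cG'$ to a $(t,s,\ell,r,\tfrac{1}{2}\vec{B})$-strong-builder via Lemma~\ref{lem::codegree}, taking $r := |V(T)|$. Writing $\binom{t}{2}-\binom{k}{2} = (t-k)(t+k-1)/2$ and using $p=cn^{-1/d}$,
\[ B_k \;=\; c^{(t-k)(t+k-1)/2} \cdot n^{(t-k)\left(1 - (t+k-1)/(2d)\right)} \qquad (k \leq s). \]
The exponent of $n$ is nonnegative exactly when $2d+1\geq t+k$, which holds for all $k\leq s$ by the hypothesis $2d+1\geq s+t$. Hence $B_k \geq c^{(t-k)(t+k-1)/2}$, so choosing $c$ sufficiently large makes the codegree condition $\frac{B_s-\ell}{\binom{t}{s}\ell} \geq \frac{1}{1-2^{-1/r}}$ hold. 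Fix any witness $(K^{(1)}, j_2, \dots, K^{(b)})$ for $(T,R)$ of glue size at most $s$ and type $\vec{d}$. Lemma~\ref{lem::building} combined with Lemma~\ref{lemma technical count} yields
\[ |\cG'_t| \prod_{k=1}^s (B_k/2)^{d_k} \;\geq\; 2^{-b} \, n^{|V(T)|} \, p^{|E(T)|} . \]
Each witness copy $(L^{(1)}, \dots, L^{(b)})$ determines and is determined by an embedding $\phi\colon V(T)\to V(G)$ via $\phi(V(K^{(i)}))=L^{(i)}$, so $G$ admits at least $2^{-b} n^{|V(T)|} p^{|E(T)|}$ embeddings of $T$.

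Finally, hypothesis~(4) yields $O(n^{|R|} p^{|E(T[R])|})$ embeddings of $T[R]$ in $G$. Restriction to $R$ sends each embedding of $T$ to an embedding of $T[R]$, so by pigeonhole some $\psi\colon R\to V(G)$ extends to at least
\[ \Omega\!\left( n^{|V(T)|-|R|}\, p^{|E(T)|-|E(T[R])|} \right) \;=\; \Omega\!\left((n p^{d'})^{|V(T)|-|R|}\right) \]
embeddings of $T$, where $d' := d((T,R))\leq d$. Since $np^{d'} = c^{d'} n^{1-d'/d} \geq c^{d'}$, this quantity is $\Omega(c^{d'(|V(T)|-|R|)})$, which exceeds $L \cdot |\mathrm{Aut}(T)|$ once $c$ is large enough. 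At most $|\mathrm{Aut}(T)|$ embeddings with a common restriction to $R$ can produce the same subgraph, so we obtain $L$ distinct copies of $T$ in $G$ sharing the root image $\psi(R)$---an element of $(T,R)^L$. The main obstacle is recognizing that $2d+1\geq s+t$ is precisely the condition doing double duty: it is what makes $B_s$ grow so Lemma~\ref{lem::codegree} applies, and simultaneously what keeps $np^{d'}$ at least a positive power of $c$ so the final pigeonhole delivers $L$ copies; everything else amounts to bookkeeping of the implicit constants.
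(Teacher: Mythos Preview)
Your argument is correct and follows essentially the same route as the paper: pass to a weak-builder via Lemma~\ref{lem::subgraph}, promote to a strong-builder via Lemma~\ref{lem::codegree} using the hypothesis $2d+1\geq s+t$, count witness copies via Lemma~\ref{lem::building} and Lemma~\ref{lemma technical count}, then pigeonhole over root images using hypothesis~(4). The only differences are cosmetic bookkeeping (you track embeddings and divide by $|\mathrm{Aut}(T)|$ at the end; the paper tracks unlabeled copies and divides by $C_T$ and $|R|!$). One small inaccuracy in your closing commentary: the fact that $np^{d'}\geq c^{d'}$ follows solely from $d'\leq d$, not from $2d+1\geq s+t$, so the latter condition is not doing ``double duty'' --- it is needed only for the codegree step.
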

	
	\begin{proof}
		Set $r:=|V(T)|$, $B_i:= \frac{n^{t-i}p^{\binom{t}{2}-\binom{i}{2}}}{2}$ for $1\leq i\leq s$, and $\vec{B}=(B_1,B_2,\dots,B_s)$. Let $\mathcal{G}$ be the $t$-complex whose sets are the cliques of $G$ of order at most $t$. Apply Lemma~\ref{lem::subgraph} to $\mathcal{G}$, and let $\mathcal{G}'$ be the resulting subcomplex of $\mathcal{G}$ with
		\begin{equation}\label{eq::G'}
			|\mathcal{G}'_t| \geq n^t p^{\binom{t}{2}} .
		\end{equation}
		Note that $\mathcal{G}'$ is a $(t,s,\ell,2\vec{B})$-weak-builder.
		
		Observe that, since $2d+1 \geq s+t$, we have
		\[ 
  2B_s=n^{t-s}p^{\binom{t}{2}-\binom{s}{2}}=c^{\binom{t}{2}-\binom{s}{2}} n^{\frac{(t-s)(2d+1-t-s)}{2d}} \geq c^{\binom{t}{2} - \binom{s}{2}} ,
  \]
		and hence
		\[ 
  \frac{2B_s-\ell}{\binom{t}{s}-\ell} \geq \frac{1}{1-2^{-1/r}} 
  \]
		for $c$ sufficiently large. So by Lemma~\ref{lem::codegree}, $\mathcal{G}'$ is a $(t,s,\ell,r,\vec{B})$-strong-builder. Let $(K^{(1)},j_2,\allowbreak K^{(2)},\allowbreak\dots,j_b,K^{(b)})$ be a witness of $T$, and let $\vec{d}$ be the type of this witness. By Lemma~\ref{lem::building}, $\mathcal{G}'$ contains at least $\displaystyle|\cG_t'| \prod_{k=1}^s B_k^{d_k}$ copies $(K^{(1)},j_2,K^{(2)},\dots,j_b,K^{(b)})$. By Observation~\ref{obs:CT}, the number of copies of this witness of $T$ on a particular $r$ vertices is a constant $C_T$ depending only on $T$ and this witness, and hence we have that the number of distinct copies of $T$ in $G$ must be at least
		\begin{align*}
			\frac{|\cG_t'| \prod_{k=1}^s B_k^{d_k}}{C_T}\geq\frac{n^tp^{\binom{t}{2}}\prod_{k=1}^s \left(n^{t-k}p^{\binom{t}{2}-\binom{k}{2}}/2\right)^{d_k}}{C_T}
		\end{align*}
		Let $D:=\sum_{k=1}^sd_k$. If we calculate the power of $n$ (without $p$) in the above expression, we get $t+\sum_{k=1}^s d_k(t-k)$, and the power of $p$ is $\binom{t}{2}+\sum_{k=1}^sd_k\left(\binom{t}{2}-\binom{k}{2}\right)$. Then using Lemma~\ref{lemma technical count}, we can write
		\begin{align*}
			\frac{n^tp^{\binom{t}{2}}\prod_{k=1}^s \left(n^{t-k}p^{\binom{t}{2}-\binom{k}{2}}/2\right)^{d_k}}{C_T}=\frac{n^{|V(T)|}p^{|E(T)|}}{2^D C_T}
		\end{align*}
		
		Since there are at most $n^{|R|}p^{|E(T[R])|}$ copies of $T[R]$ and at most $|R|!$ automorphisms of $T[R]$, there must be a collection of at least
		\begin{equation}\label{eq::figureoutL}
			\frac{n^{|V(T)|-|R|}p^{|E(T)|-|E(T[R])|}}{2^D C_T |R|!}
		\end{equation}
		copies of $T$ in $G$ which intersect in the same root set $R$, and in such a way that there is an isomorphism between any two copies that fix $R$. For $c$ sufficiently large, since $(T,R)$ has rooted density at most $d$, we have that \eqref{eq::figureoutL} is
		\[ \frac{c^{|E(T)|-|E(T[R])|} n^{|V(T)|-|R| - (|E(T)|-|E(T[R])|)/d}}{2^D C_T |R|!} \geq \frac{c^{|E(T)|-|E(T[R])|}}{2^D C_T |R|!} \geq L ,\]
		and hence $G$ contains a member of the $L^{\text{th}}$ power of $(T,R)$.
	\end{proof}

	\section{Constructions for cliques}\label{section constructions}
	
	We now turn to constructing the families of forbidden graphs which will achieve the desired exponents. Similar to the approach of Bukh and Conlon, we forbid large families based upon specially-chosen, tree-like constructions. Recall that Theorem~\ref{thm::lowerbound} indicates that if our aim is to show that $r$ is a realizable exponent for $K_t$, then we should use balanced rooted graphs of rooted densities at least $d$, where $r = t- \binom{t}{2}/d$. Thus, to show that all exponents in the interval $(1, t)$ are realizable for $K_t$, we must construct balanced rooted graphs with densities in $(t/2, \infty)$. Moreover, these rooted graphs should be $K_t$-trees, so that we can apply the tools of Section~\ref{sec::tools} to build them at appropriate $K_t$-densities.
	
	We begin by introducing a pair of $K_3$-tree constructions, which achieve the desired densities for $K_3$. We then generalize these constructions to create appropriate $K_t$-trees for all $t$. For this reason, while we state propositions which establish the relevant properties of these constructions for $K_3$, we postpone proofs until the constructions are described in full generality. Throughout this section, because all constructions we consider are $K_t$-trees for some $t$, we shall describe $K_t$-trees with the notation $T_{t}$ instead of $T_{K_t}$. The notation $T_2(a,b)$ always refers to the Bukh-Conlon tree on $a$ unrooted vertices and $b$ edges; refer to Construction~\ref{constr::BC} for the definition of $T_2(a,b)$.

	\subsection{\texorpdfstring{$K_3$}{2}-tree constructions}\label{sec::triangle constructions}

	We require two different constructions, depending upon the rooted density which we wish to achieve. 
	
	
	\begin{construction}\label{constr::3.1}
		Let $a,b$ be positive integers with $a+1 \leq b$. We form $T_3(a,b)$ by adding $b$ new vertices to $T_2(a,b)$, one for each edge of $T_2(a,b)$, and joining them to the endpoints of the corresponding edge. We call the vertices of the original $T_2(a,b)$ the \emph{tree vertices}, and we call the $b$ new vertices the \emph{spike vertices}. All spike vertices are unrooted, and the tree vertices retain their rooted/unrooted status.
	\end{construction}

	\begin{figure}[ht]
		\begin{center}
			\begin{tikzpicture}

				\filldraw (0,0) circle (0.05 cm);
				\filldraw (1,0) circle (0.05 cm);
				\filldraw (2,0) circle (0.05 cm);
				\filldraw (3,0) circle (0.05 cm);
				\filldraw (4,0) circle (0.05 cm);
				\filldraw (0,-1) circle (0.05 cm);
				\filldraw (2,-1) circle (0.05 cm);
				\filldraw (4,-1) circle (0.05 cm);
				\draw (0,0) -- (4,0);
				\draw (0,0) -- (0,-1);
				\draw (2,0) -- (2,-1);
				\draw (4,0) -- (4,-1);
				\draw (-1,-.7) -- (11,-.7);
				
				\filldraw (6,0) circle (0.05 cm);
				\filldraw (7,0) circle (0.05 cm);
				\filldraw (8,0) circle (0.05 cm);
				\filldraw (9,0) circle (0.05 cm);
				\filldraw (10,0) circle (0.05 cm);
				\filldraw (6,-1) circle (0.05 cm);
				\filldraw (8,-1) circle (0.05 cm);
				\filldraw (10,-1) circle (0.05 cm);
				\draw (6,0) -- (10,0);
				\draw (6,0) -- (6,-1);
				\draw (8,0) -- (8,-1);
				\draw (10,0) -- (10,-1);
				\filldraw (6.5,-0.5) circle (0.05 cm);
				\filldraw (8.5,-0.5) circle (0.05 cm);
				\filldraw (10.5,-0.5) circle (0.05 cm);
				\filldraw (6.5, 0.5) circle (0.05 cm);
				\filldraw (7.5, 0.5) circle (0.05 cm);
				\filldraw (8.5, 0.5) circle (0.05 cm);
				\filldraw (9.5, 0.5) circle (0.05 cm);
				\draw (6,0) -- (6.5, -0.5) -- (6, -1);
				\draw (8,0) -- (8.5, -0.5) -- (8, -1);
				\draw (10,0) -- (10.5, -0.5) -- (10, -1);
				\draw (6,0) -- (6.5, 0.5) -- (7,0);
				\draw (7,0) -- (7.5, 0.5) -- (8,0);
				\draw (8,0) -- (8.5, 0.5) -- (9,0);
				\draw (9,0) -- (9.5, 0.5) -- (10,0);
			\end{tikzpicture}
			
			\caption{$T_2(5,7)$ (left) and $T_3(5,7)$ (right). Rooted vertices are below the line.}    
		\end{center}
	\end{figure}
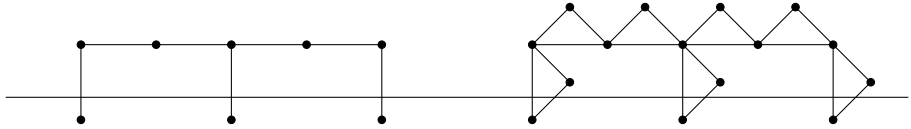
	
	\begin{prop}\label{prop::3.1}
		$T_3(a,b)$ with $a+1 \leq b \leq 2a$ is a balanced $K_3$-tree with density $\frac{3b}{a+b}$.
	\end{prop}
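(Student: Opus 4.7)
The plan is to verify each of the three claims in the proposition: that $T_3(a,b)$ is a $K_3$-tree, that its rooted density equals $\tfrac{3b}{a+b}$, and that it is balanced. For the first, I would order the $b$ triangles of $T_3(a,b)$ (one per edge of the underlying $T_2(a,b)$) by BFS on the tree $T_2(a,b)$; because $T_2(a,b)$ is a tree, each edge encountered after the first has exactly one endpoint already present and one that is new (a second old endpoint would create a cycle), so each new triangle meets the union of previous triangles in exactly one tree-vertex, yielding a witness sequence in the sense of Definition~\ref{def::Kttree}. The density computation is immediate: the $a+b$ non-root vertices consist of the $a$ spine vertices plus the $b$ spike vertices, and every one of the $3b$ edges is incident to at least one such vertex.

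The main work is proving balancedness. Let $S = S_t \sqcup S_s$ be a nonempty subset of $V(T_3(a,b)) \setminus R$, with $S_t$ consisting of chosen spine vertices and $S_s$ of chosen spike vertices. For each edge $e = xy$ of $T_2(a,b)$, set $k_e := |\{x,y\} \cap S_t|$. A short case analysis on whether $v_e \in S_s$ shows that the triangle $\{x,y,v_e\}$ contributes $k_e + \mathbf{1}_{k_e \ge 1}$ edges to $e(S)$ when $v_e \notin S_s$ and $2 + \mathbf{1}_{k_e \ge 1}$ when $v_e \in S_s$. Summing over $e$, and using $\sum_e k_e = e_{T_2}(S_t) + e_{T_2}[S_t]$ together with $\sum_e \mathbf{1}_{k_e \ge 1} = e_{T_2}(S_t)$, one obtains the identity
\[
e(S) = 2\,e_{T_2}(S_t) + e_{T_2}[S_t] + \sum_{v_e \in S_s}(2 - k_e),
\]
where $e_{T_2}(S_t)$ and $e_{T_2}[S_t]$ denote the numbers of $T_2$-edges incident to and induced by $S_t$ respectively.

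Set $D := \tfrac{3b}{a+b}$; the hypotheses $a+1 \le b \le 2a$ yield $1 < D \le 2$. The marginal effect on $e(S) - D|S|$ of including a spike $v_e$ in $S_s$ is $(2 - k_e) - D$, which is nonnegative when $k_e = 0$ and strictly negative when $k_e \ge 1$. Since the per-spike contributions are independent, $e(S) - D|S|$ is minimized (over $S_s$, for fixed $S_t$) by including precisely those spikes $v_e$ with $k_e \ge 1$, and direct computation in that configuration yields
\[
e(S) - D|S| \ge e_{T_2}(S_t)(3 - D) - D\,|S_t| = \frac{3}{a+b}\bigl(a\,e_{T_2}(S_t) - b\,|S_t|\bigr),
\]
which is nonnegative by the balancedness of $T_2(a,b)$ established in Proposition~\ref{prop::balanced:BC}. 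The degenerate case $S_t = \emptyset$ is handled directly: each chosen spike then contributes exactly $2$ edges and $1$ vertex to $S$, giving density $2 \ge D$. The main obstacle I expect is the bookkeeping required to derive the clean identity for $e(S)$; once it is in hand, the optimization over $S_s$ decouples across triangles and the bound follows immediately from the known balancedness of $T_2(a,b)$.
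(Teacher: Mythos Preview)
Your argument is correct and follows essentially the same route as the paper. The paper defers the proof of this proposition to the general result for $T_t^1(a,b,s)$ (Proposition~\ref{proposition generalized spike properties}), which for $t=3$, $s=1$ specializes to exactly your computation: both reduce to the balancedness of $T_2(a,b)$ after showing that the optimal choice of spikes (for minimizing $d(S)$) is to include precisely those whose underlying tree-edge already meets $S_t$. The paper reaches that reduction via a connectedness assumption plus Claim~\ref{spike clique claim}, while you reach it by a marginal-contribution analysis on $e(S)-D|S|$; these are two phrasings of the same idea, and your explicit identity for $e(S)$ makes the specialization especially transparent.
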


        We note that, since $T_3(a,b)$ contains unrooted vertices of degree $2$, it cannot be balanced if $b>2a$. For other densities, we use the following construction. First, we need the following observation about $T_2(a,b)$.

        \begin{observation}\label{obs:degrees}
            Let $a,b$ be positive integers with $2a+2 \leq b \leq 3a+1$. Label the $a$ unrooted vertices of $T_2(a,b)$ as $x_1, \dots x_a$. Then for $1 < i < a$, $x_i$ has exactly $1$ or $2$ rooted neighbors, and $x_1$ and $x_a$ each have exactly $2$ or $3$ rooted neighbors.
        \end{observation}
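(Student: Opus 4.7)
The plan is to recast the leaf-counting problem for $T_2(a,b)$ as counting integers in certain half-open intervals, then dispatch a short case analysis.

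Set $\alpha = a/(b-a)$. By Construction~\ref{constr::BC}, the number of rooted neighbors of $x_i$ equals the number of $k \in \{0, 1, \ldots, b-a-1\}$ with $\lfloor 1 + k\alpha \rfloor = i$, plus one additional unit when $i = a$ (from the final ``$a$'' appended to the sequence). The condition $\lfloor 1 + k\alpha \rfloor = i$ is equivalent to $k \in [(i-1)/\alpha,\, i/\alpha)$. The hypothesis $2a+2 \le b \le 3a+1$ is equivalent to $1/\alpha = (b-a)/a \in [1 + 2/a,\, 2 + 1/a]$, so each such interval has length strictly between $1$ and $3$ and hence contains between $1$ and $3$ integers a priori.

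For middle indices ($1 < i < a$), the interval $[(i-1)/\alpha, i/\alpha)$ lies entirely inside $(0, b-a)$, so no integers are lost at the boundaries of the $k$-range and the count is exactly the number of integers in the interval. It only remains to rule out the count being $3$, which is possible only when $1/\alpha > 2$, i.e., $b = 3a+1$. In that case I would write the endpoints explicitly as $(i-1)/\alpha = 2(i-1) + (i-1)/a$ and $i/\alpha = 2i + i/a$ and use that $1 < i < a$ forces both $(i-1)/a$ and $i/a$ to lie strictly in $(0, 1)$. This places the lower endpoint strictly in $(2i-2, 2i-1)$ and the upper endpoint strictly in $(2i, 2i+1)$, so the only integers in the interval are $2i-1$ and $2i$, giving the count $2$.

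For the endpoint indices $i \in \{1, a\}$, I would split $b \in [2a+2, 3a+1]$ into the three sub-ranges $b \in \{2a+2, \ldots, 3a-1\}$, $b = 3a$, and $b = 3a+1$ (corresponding to $1/\alpha < 2$, $=2$, and $>2$) and directly compute $\lceil 1/\alpha \rceil$ for the $i = 1$ count, together with the integer count of $[(a-1)/\alpha,\, b-a)$ plus $1$ for the $i = a$ count. A brief calculation shows each case lands in $\{2, 3\}$.

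The only mildly delicate point is the $b = 3a+1$ analysis for middle $i$, where the interval has length just over $2$ and could in principle contain a third integer; the strict fractional bounds $0 < (i-1)/a,\, i/a < 1$ coming from $1 < i < a$ are precisely what prevents this and, symmetrically, are what allow the extra rooted neighbor to appear at $i = 1$ and $i = a$.
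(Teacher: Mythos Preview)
Your argument is correct. The paper states this as an observation without proof, so there is nothing to compare against; your explicit computation with $\alpha = a/(b-a)$ and the interval $[(i-1)/\alpha,\, i/\alpha)$ is exactly the natural way to verify it, and your handling of the boundary case $b = 3a+1$ (where the fractional parts $(i-1)/a,\, i/a \in (0,1)$ for $1 < i < a$ pin down the two integers $2i-1, 2i$) is the key point that makes the observation non-vacuous.
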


	\begin{construction}\label{constr::3.2}
		Let $a,b$ be positive integers such that $a$ is even and $2a + 2 \leq b$. If $b\leq 3a+1$, we form $T_3(a,b)$ by identifying root vertices in $T_2(a,b)$ and adding edges to the new root set, as follows. 
		
		Label the $a$ unrooted vertices of $T_2(a,b)$ as $x_1, \dots, x_a$. By Observation~\ref{obs:degrees}, each $x_i$ with $1 < i < a$ has either $1$ or $2$ rooted neighbors. If $x_i$ has $1$ rooted neighbor, then we label the rooted neighbor as $y_i$; if $x_i$ has $2$ rooted neighbors, then we label them as $\ell_i$ and $r_i$. By Observation~\ref{obs:degrees}, $x_1$ and $x_a$ each either have $2$ or $3$ rooted neighbors. If $x_i$ for $i \in \{1,a\}$ has $2$ rooted neighbors, then we label them as $\ell_i$ and $r_i$ as before. If $x_i$ for $i \in \{1,a\}$ has $3$ rooted neighbors, then we label two of them as $z_i$ and $w_i$ and the third as $r_1$ for $i=1$ and $\ell_a$ for $i=a$. Let $i_1 < \cdots < i_t$ be the indices where $x_i$ has at least 2 rooted neighbors.
		
		To form $T_3(a,b)$ with $2a+2 \leq b \leq 3a+1$, we perform the following identifications and additions. First, if $x_i$ for $i \in \{1,a\}$ has $3$ rooted neighbors, then we add the edge $z_iw_i$; otherwise, we add the edge $\ell_i r_i$. Second, we identify rooted vertices: For $1 \leq j < t$, we identify $r_{i_j}, \ell_{i_{j+1}}$, and all vertices $y_i$ with $i_j < i < i_{j+1}$. All vertices retain their rooted/unrooted status from $T_2(a,b)$, even after identification.

		Finally, for $b \geq 3a + 2$, we define $T_3(a,b)$ recursively to be the $K_3$-tree obtained by taking $T_3(a, b-a)$, adding $a/2$ new rooted vertices $v_1,v_2,\dots,v_{a/2}$, and then adding the edges $x_{2i-1}v_i$ and $v_ix_{2i}$ for each $1\leq i\leq a/2$. Note that $x_{2i-1},x_{2i}$ and $v_i$ induce a triangle in $T_3(a,b)$. The root set of $T_3(a,b)$ consists of all the rooted vertices of $T_3(a,b-a)$, along with all the new rooted vertices $v_1, \dots, v_{a/2}$.
	\end{construction}

	\begin{figure}[ht]
		\begin{center}
			\begin{tikzpicture}

				\filldraw (-1,0) circle (0.05 cm) node[above]{$x_1$};
				
				\filldraw (0,0) circle (0.05 cm) node[above]{$x_2$};
				
				\filldraw (1,0) circle (0.05 cm) node[above]{$x_3$};
				
				\filldraw (2,0) circle (0.05 cm) node[above]{$x_4$};
				
				\filldraw (3,0) circle (0.05 cm) node[above]{$x_5$};
				
				\filldraw (4,0) circle (0.05 cm) node[above]{$x_6$};
				
				\filldraw (-1.25,-1) circle (0.05 cm) node[below]{$\ell_1$};
				
				\filldraw (-0.75,-1) circle (0.05 cm);
				
				\draw (-0.75, -1.35) node{$r_1$};
				
				\filldraw (0,-1) circle (0.05 cm);
				
				\draw (0,-1.35) node{$y_2$};
				
				\filldraw (0.75,-1) circle (0.05 cm) node[below]{$\ell_3$};
				
				\filldraw (1.25,-1) circle (0.05 cm);
				
				\draw (1.25, -1.35) node{$r_3$};
				
				\filldraw (2,-1) circle (0.05 cm);
				\draw (2,-1.35) node{$y_4$};
				
				\filldraw (2.75,-1) circle (0.05 cm) node[below]{$\ell_5$};
				
				\filldraw (3.25,-1) circle (0.05 cm);
				
				\draw (3.25, -1.35) node{$r_5$};
				
				\filldraw (3.75,-1) circle (0.05 cm) node[below]{$\ell_6$};
				
				\filldraw (4.25,-1) circle (0.05 cm);
				
				\draw (4.25, -1.35) node{$r_6$};
				
				\draw (-1,0) -- (4,0);
				\draw (-1,0) -- (-1.25,-1);
				\draw (-1,0) -- (-0.75,-1);
				\draw (0,0) -- (0,-1);
				\draw (1,0) -- (0.75,-1);
				\draw (1,0) -- (1.25,-1);
				\draw (2,0) -- (2,-1);
				\draw (3,0) -- (2.75,-1);
				\draw (3,0) -- (3.25,-1);
				\draw (4,0) -- (3.75,-1);
				\draw (4,0) -- (4.25,-1);

				\filldraw (6,0) circle (0.05 cm) node[above]{$x_1$};
				\filldraw (7,0) circle (0.05 cm) node[above]{$x_2$};
				\filldraw (8,0) circle (0.05 cm) node[above]{$x_3$};
				\filldraw (9,0) circle (0.05 cm) node[above]{$x_4$};
				\filldraw (10,0) circle (0.05 cm) node[above]{$x_5$};
				\filldraw (11,0) circle (0.05 cm) node[above]{$x_6$};
				\filldraw (6,-1) circle (0.05 cm);
				\filldraw (7, -1) circle (0.05 cm);
				\filldraw (9,-1) circle (0.05 cm);
				\filldraw (10.5,-1) circle (0.05 cm);
				\filldraw (11,-1) circle (0.05 cm);
				
				\draw (6,0) -- (11,0);
				
				\draw (6,0) -- (6,-1);
				
				\draw (6,0) -- (7,-1);
				\draw (7,0) -- (7,-1);
				\draw (8,0) -- (7,-1);
				
				\draw (6,-1) -- (7,-1);
				
				\draw (8,0) -- (9,-1);
				\draw (9,0) -- (9,-1);
				\draw (10,0) -- (9,-1);
				
				\draw (10,0) -- (10.5,-1);
				\draw (11,0) -- (10.5,-1);
				
				\draw (11,0) -- (11,-1);
				\draw (10.5,-1) -- (11,-1);
				
				\draw (-2,-.7) -- (12,-.7);

			\end{tikzpicture}
			
			\caption{$T_2(6,15)$ (left) and $T_3(6,15)$ (right). Rooted vertices are below the line.}    
		\end{center}
	\end{figure}
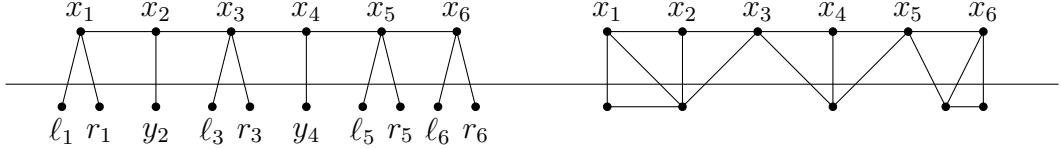

	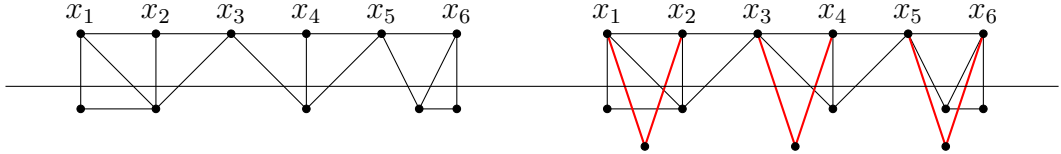
\begin{figure}[ht]
		\begin{center}
			\begin{tikzpicture}

				\filldraw (-1,0) circle (0.05 cm) node[above]{$x_1$};
				\filldraw (0,0) circle (0.05 cm) node[above]{$x_2$};
				\filldraw (1,0) circle (0.05 cm) node[above]{$x_3$};
				\filldraw (2,0) circle (0.05 cm) node[above]{$x_4$};
				\filldraw (3,0) circle (0.05 cm) node[above]{$x_5$};
				\filldraw (4,0) circle (0.05 cm) node[above]{$x_6$};
				\filldraw (-1,-1) circle (0.05 cm);
				\filldraw (0, -1) circle (0.05 cm);
				\filldraw (2,-1) circle (0.05 cm);
				\filldraw (3.5,-1) circle (0.05 cm);
				\filldraw (4,-1) circle (0.05 cm);
				
				\draw (-1,0) -- (4,0);
				
				\draw (-1,0) -- (-1,-1);
				
				\draw (-1,0) -- (0,-1);
				\draw (0,0) -- (0,-1);
				\draw (1,0) -- (0,-1);
				
				\draw (-1,-1) -- (0,-1);
				
				\draw (1,0) -- (2,-1);
				\draw (2,0) -- (2,-1);
				\draw (3,0) -- (2,-1);
				
				\draw (3,0) -- (3.5,-1);
				\draw (4,0) -- (3.5,-1);
				
				\draw (4,0) -- (4,-1);
				\draw (3.5,-1) -- (4,-1);

				\filldraw (6,-1) circle (0.05 cm);
				\filldraw (7, -1) circle (0.05 cm);
				\filldraw (9,-1) circle (0.05 cm);
				\filldraw (10.5,-1) circle (0.05 cm);
				\filldraw (11,-1) circle (0.05 cm);
				
				\draw (6,0) -- (11,0);
				
				\draw (6,0) -- (6,-1);
				
				\draw (6,0) -- (7,-1);
				\draw (7,0) -- (7,-1);
				\draw (8,0) -- (7,-1);
				
				\draw (6,-1) -- (7,-1);
				
				\draw (8,0) -- (9,-1);
				\draw (9,0) -- (9,-1);
				\draw (10,0) -- (9,-1);
				
				\draw (10,0) -- (10.5,-1);
				\draw (11,0) -- (10.5,-1);
				
				\draw (11,0) -- (11,-1);
				\draw (10.5,-1) -- (11,-1);
				
				\draw[thick, color = red] (6,0) -- (6.5, -1.5) -- (7,0) ; 
				
				\draw[thick, color = red] (8,0) -- (8.5, -1.5) -- (9,0) ;
				
				\draw[thick, color = red] (10,0) -- (10.5, -1.5) -- (11,0) ;
				
				\filldraw (6.5, -1.5) circle (0.05 cm);
				
				\filldraw (8.5, -1.5) circle (0.05 cm);
				
				\filldraw (10.5, -1.5) circle (0.05 cm);
				
				\filldraw (6,0) circle (0.05 cm) node[above]{$x_1$};
				\filldraw (7,0) circle (0.05 cm) node[above]{$x_2$};
				\filldraw (8,0) circle (0.05 cm) node[above]{$x_3$};
				\filldraw (9,0) circle (0.05 cm) node[above]{$x_4$};
				\filldraw (10,0) circle (0.05 cm) node[above]{$x_5$};
				\filldraw (11,0) circle (0.05 cm) node[above]{$x_6$};
				
				\draw (-2,-.7) -- (12,-.7);

			\end{tikzpicture}
			
			\caption{$T_3(6,21)$ (right) is obtained from $T_3(6,15)$ (left) by the addition of disjoint rooted triangles. Rooted vertices are below the line.}    
		\end{center}
	\end{figure}
	
	\begin{prop}\label{prop::3.2}
		$T_3(a,b)$ with $b \geq 2a+2$ and $a$ even is a balanced $K_3$-tree with density $b/a$. Moreover, the subgraph of $T_3(a,b)$ induced by its root set is a disjoint union of isolated vertices and edges.
	\end{prop}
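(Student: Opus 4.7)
The plan is to split along the two regimes of Construction~\ref{constr::3.2}: the \emph{direct case} $2a+2 \leq b \leq 3a+1$ and the \emph{recursive case} $b \geq 3a+2$, handling the latter by induction on $b$ with the direct case as the base.

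In the direct case, the unrooted vertex set of $T_3(a,b)$ is exactly $\{x_1,\ldots,x_a\}$, and the two edges added during construction (one at each of $x_1,x_a$, of the form $\ell_i r_i$ or $z_i w_i$) lie entirely in the root set. Consequently, for every $S \subseteq \{x_1,\ldots,x_a\}$ the edges of $T_3(a,b)$ incident to $S$ coincide with the edges of $T_2(a,b)$ incident to $S$, so Proposition~\ref{prop::balanced:BC} immediately gives $e(S) \geq (b/a)|S|$ with equality at $S = \{x_1,\ldots,x_a\}$; this proves both density $b/a$ and balancedness. For the $K_3$-tree structure, let $i_1 = 1 < i_2 < \cdots < i_t = a$ be the indices of multi-rooted spine vertices and let $u_j$ denote the root obtained by identifying $r_{i_j},\ell_{i_{j+1}}$, and all $y_i$ with $i_j < i < i_{j+1}$; then $u_j$ is adjacent to every $x_i$ with $i_j \leq i \leq i_{j+1}$. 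The spine triangles $\{x_i, x_{i+1}, u_j\}$ for $i_j \leq i < i_{j+1}$, together with the two end-triangles at $x_1$ and $x_a$ coming from the added edges, form a valid $K_3$-tree witness when ordered left-to-right: each successive triangle meets the previous union in either a single spine vertex (when transitioning between intervals) or a spine-root pair (within an interval), which is nonempty and a proper subset.

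To verify the root-set structure in the direct case, I must show the two added end-edges are vertex-disjoint. The only possible coincidence is $u_1 = u_{t-1}$, which forces $t = 2$, combined with both $x_1, x_a$ having exactly two rooted neighbors so that both end-edges are of the form $\ell_i r_i$ and share the vertex $u_1 = u_{t-1}$. A direct leaf-count in $T_2(a,b)$ --- using that each interior spine vertex carries $1$ or $2$ leaves and each of $x_1, x_a$ carries $2$ or $3$ --- shows this scenario forces $b = 2a+1$, which is excluded by $b \geq 2a+2$. Hence the two added edges form a matching, and together with the remaining roots give the desired disjoint union of edges and isolated vertices.

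For the recursive case, by induction $T_3(a, b-a)$ is a balanced $K_3$-tree of density $(b-a)/a$ whose root set induces a matching plus isolated vertices. The new rooted vertices $v_1, \ldots, v_{a/2}$ are adjacent only to spine vertices, so they contribute no root-root edges, preserving the root-set structure. For density and balancedness, the new edges $x_{2i-1} v_i$ and $v_i x_{2i}$ contribute exactly $|S \cap \{x_{2i-1}, x_{2i}\}|$ edges incident to each $S \subseteq \{x_1,\ldots,x_a\}$, summing to $|S|$ new incident edges in total, so by induction $e(S) \geq (b-a)|S|/a + |S| = b|S|/a$ with equality at $S = \{x_1, \ldots, x_a\}$. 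The witness from induction extends by appending the triangles $\{x_{2i-1}, x_{2i}, v_i\}$: each meets the previous union exactly along the edge $x_{2i-1} x_{2i}$, which is contained in the spine triangle of $T_3(a, b-a)$ joining $x_{2i-1}$ and $x_{2i}$, satisfying the $K_3$-tree condition. The main technical subtlety is the disjointness check of the end-edges in the direct case, which is precisely where the hypothesis $b \geq 2a+2$ is used.
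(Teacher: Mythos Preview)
Your argument is correct and follows essentially the same route as the paper. The paper postpones the proof of this proposition to the general case of $T_t^2(a,b)$ (Observation~\ref{observation density of Tt2} and Proposition~\ref{prop:type2}), which for $t=3$ specialize precisely to your argument: in the direct regime, balancedness is inherited from $T_2(a,b)$ because the unrooted vertex set and its incident edges are unchanged by the root identifications; in the recursive regime, every non-root vertex gains exactly one new edge to the root, so the density of every set increases by $1$. Your verification that the two end-edges are vertex-disjoint is exactly the content of the paper's parenthetical remark ``note that $x_1 \neq x_{a+1}$ since $b \geq 2a+2$'' in the proof of Observation~\ref{observation density of Tt2}, for which you supply the explicit leaf-count $b-a+1 = a+2$ forcing $b = 2a+1$; this is more detailed than what the paper writes but identical in substance.
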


        While Construction~\ref{constr::3.2} could extend to $T_3(a,2a+1)$, we exclude this case in Proposition~\ref{prop::3.2} because there is a path with two edges induced by the root set in $T_3(a,2a+1)$.
	
	\subsection{\texorpdfstring{$K_t$}{2}-tree constructions}
	
	We now describe generalizations of the constructions in Subsection~\ref{sec::triangle constructions} which we shall use to obtain the desired range of rooted densities for all cliques. Firstly, we generalize Construction~\ref{constr::3.1}.
	
	\begin{construction}\label{generalized spike construction}
		
		Fix positive integers $a,b,s,t$, with $t \geq 3$, $s \leq \frac{t}{2}$, and $a+1 \leq b$. To construct a \textbf{Type 1 $K_t$-tree} with parameters $a,b,s$, we replace each vertex $v_i$ of $T_2(a,b)$ with a set $S_i$ of $s$ vertices, and replace each edge $v_iv_j$ of $T_2(a,b)$ with a set $S_{i,j}$ of $t - 2s$ spike vertices. The sets $S_i$ and $S_{i,j}$ each induce cliques. For $1 \leq i < j \leq a$, the vertices of $S_i$ are adjacent to the vertices of $S_j$ if and only if $v_iv_j$ is an edge of $T_2(a,b)$. For each edge $v_iv_j$ of $T_2(a,b)$, the vertices in $S_{i,j}$ are adjacent to all vertices of $S_i$ and $S_j$.
		
		We denote the above construction as $T_t^1(a,b,s)$. The root of $T_t^1(a,b,s)$ consists of the union of the sets $S_i$ such that $v_i$ was rooted in $T_2(a,b)$.   
	\end{construction}

	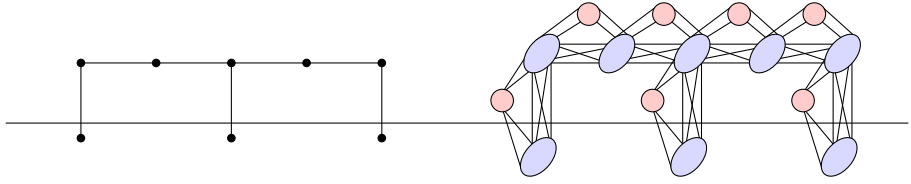
\begin{figure}[ht]
		\begin{center}
			\begin{tikzpicture}

				\filldraw (0,0) circle (0.05 cm);
				\filldraw (1,0) circle (0.05 cm);
				\filldraw (2,0) circle (0.05 cm);
				\filldraw (3,0) circle (0.05 cm);
				\filldraw (4,0) circle (0.05 cm);
				\filldraw (0,-1) circle (0.05 cm);
				\filldraw (2,-1) circle (0.05 cm);
				\filldraw (4,-1) circle (0.05 cm);
				\draw (0,0) -- (4,0);
				\draw (0,0) -- (0,-1);
				\draw (2,0) -- (2,-1);
				\draw (4,0) -- (4,-1);

				\filldraw (6,0) circle (0.05 cm);
				\filldraw (7,0) circle (0.05 cm);
				\filldraw (8,0) circle (0.05 cm);
				\filldraw (9,0) circle (0.05 cm);
				\filldraw (10,0) circle (0.05 cm);

				\filldraw (6.25,0.25) circle (0.05 cm);
				\filldraw (7.25,0.25) circle (0.05 cm);
				\filldraw (8.25,0.25) circle (0.05 cm);
				\filldraw (9.25,0.25) circle (0.05 cm);
				\filldraw (10.25,0.25) circle (0.05 cm);

				\draw (6,0) -- (10,0);
				\draw (6,0) -- (6,-1.325);
				\draw (8,0) -- (8,-1.325);
				\draw (10,0) -- (10,-1.325);
				
				\draw (6.25,0.25) -- (10.25, 0.25);
				
				\draw (6.25,0.25) -- (6.25,-1.125);
				\draw (8.25,0.25) -- (8.25,-1.125);
				\draw (10.25,0.25) -- (10.25,-1.125);

				\draw (6,0) -- (7.25, 0.25);
				\draw (7,0) -- (8.25, 0.25);
				\draw (8,0) -- (9.25, 0.25);
				\draw (9,0) -- (10.25, 0.25);
				\draw (7,0) -- (6.25, 0.25);
				\draw (8,0) -- (7.25, 0.25);
				\draw (9,0) -- (8.25, 0.25);
				\draw (10,0) -- (9.25, 0.25);
				
				\draw (6,0) -- (6.25, -1.125);
				\draw (8,0) -- (8.25, -1.125);
				\draw (10,0) -- (10.25, -1.125);
				\draw (6.25, 0.25) -- (6,-1.375);
				\draw (8.25, 0.25) -- (8,-1.375);
				\draw (10.25, 0.25) -- (10,-1.375);
				
				\draw (5.95,0.14) -- (5.6, -0.4);
				\draw (6.1,0) -- (5.6, -0.4);
				\draw (6.1,-1.2) -- (5.6, -0.6);
				\draw (5.85,-1.4) -- (5.6, -0.6);

				\draw (7.95,0.14) -- (7.6, -0.4);
				\draw (8.1,0) -- (7.6, -0.4);
				\draw (8.1,-1.2) -- (7.6, -0.6);
				\draw (7.85,-1.4) -- (7.6, -0.6);
				
				\draw (9.95,0.14) -- (9.6, -0.4);
				\draw (10.1,0) -- (9.6, -0.4);
				\draw (10.1,-1.2) -- (9.6, -0.6);
				\draw (9.85,-1.4) -- (9.6, -0.6);

				\draw (6.65,0.75) -- (6.05,0.3);
				\draw (6.65,0.55) -- (6.25,0.3);
				\draw (6.85, 0.75) -- (7.35,0.3);
				\draw (6.85, 0.55) -- (7.15,0.3);

				\draw (7.65,0.75) -- (7.05,0.3);
				\draw (7.65,0.55) -- (7.25,0.3);
				\draw (7.85, 0.75) -- (8.35,0.3);
				\draw (7.85, 0.55) -- (8.15,0.3);
				
				\draw (8.65,0.75) -- (8.05,0.3);
				\draw (8.65,0.55) -- (8.25,0.3);
				\draw (8.85, 0.75) -- (9.35,0.3);
				\draw (8.85, 0.55) -- (9.15,0.3);
				
				\draw (9.65,0.75) -- (9.05,0.3);
				\draw (9.65,0.55) -- (9.25,0.3);
				\draw (9.85, 0.75) -- (10.35,0.3);
				\draw (9.85, 0.55) -- (10.15,0.3);
				
				\filldraw[rotate around = {-40.25:(6.125,0.125)}, fill = blue!15!white] (6.125,0.125) ellipse (0.18cm and 0.3 cm);
				
				\filldraw[rotate around = {-40.25:(7.125,0.125)}, fill = blue!15!white] (7.125,0.125) ellipse (0.18cm and 0.3 cm);
				
				\filldraw[rotate around = {-40.25:(8.125,0.125)}, fill = blue!15!white] (8.125,0.125) ellipse (0.18cm and 0.3 cm);
				
				\filldraw[rotate around = {-40.25:(9.125,0.125)}, fill = blue!15!white] (9.125,0.125) ellipse (0.18cm and 0.3 cm);
				
				\filldraw[rotate around = {-40.25:(10.125,0.125)}, fill = blue!15!white] (10.125,0.125) ellipse (0.18cm and 0.3 cm);

				\filldraw[rotate around = {-40.25:(6.08,-1.25)}, fill = blue!15!white] (6.08,-1.25) ellipse (0.18cm and 0.3 cm);

				\filldraw[rotate around = {-40.25:(8.08,-1.25)}, fill = blue!15!white] (8.08,-1.25) ellipse (0.18cm and 0.3 cm);

				\filldraw[rotate around = {-40.25:(10.08,-1.25)}, fill = blue!15!white] (10.08,-1.25) ellipse (0.18cm and 0.3 cm);
				
				\draw (-1, -0.8) -- (11, -0.8);

				\filldraw[fill = red!20!white] (5.6, -0.5) circle (0.15 cm);
				
				\filldraw[fill = red!20!white] (7.6, -0.5) circle (0.15 cm);
				
				\filldraw[fill = red!20!white] (9.6, -0.5) circle (0.15 cm);
				
				\filldraw[fill = red!20!white] (6.75, 0.65) circle (0.15 cm);
				
				\filldraw[fill = red!20!white] (7.75, 0.65) circle (0.15 cm);
				
				\filldraw[fill = red!20!white] (8.75, 0.65) circle (0.15 cm);
				
				\filldraw[fill = red!20!white] (9.75, 0.65) circle (0.15 cm);
				
			\end{tikzpicture}
			
			\caption{$T_2(5,7)$ (left) and $T_{t}^1(5,7,s)$ (right). Blue shaded ellipses represent cliques of size $s$, while pink shaded circles represent cliques of size $t - 2s$. Rooted vertices are below the line.}    
		\end{center}
	\end{figure}

\begin{observation}\label{observation density of dt1}
	For all $t \geq 3$ and positive integers $a,b,s$ such that $a+1 \leq b $ and $2s \leq t$, the construction $T_t^1(a,b,s)$ has rooted density 
	\[
	d_{t}^1(a,b,s) := \frac{a\binom{s}{2} + b\left( \binom{t}{2} - 2 \binom{s}{2} \right)}{as + b(t - 2s)}.
	\]
	Moreover, $T_t^1(a,b,s)$ is a $K_t$-tree with glue size $s$, and the subgraph of $T_t^1(a,b,s)$ induced by the root set is a disjoint union of cliques of size $s$.
\end{observation}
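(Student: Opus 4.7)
The three claims — the density formula, the $K_t$-tree structure, and the shape of the root-induced subgraph — each follow by direct verification from Construction~\ref{generalized spike construction}, so the plan is to handle them in that order.

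For the density, I would first tally non-root vertices: each of the $a$ unrooted spine vertices of $T_2(a,b)$ contributes an $S_i$ of size $s$, and each of the $b$ edges of $T_2(a,b)$ contributes a spike set $S_{i,j}$ of size $t-2s$, giving the denominator $as + b(t-2s)$. For edges with at least one non-root endpoint, I would split them into (a) edges internal to an unrooted $S_i$, contributing $a\binom{s}{2}$ in total, and (b) edges of one of the $b$ cliques on $S_i \cup S_j \cup S_{i,j}$ that are not internal to $S_i$ or $S_j$, contributing $\binom{t}{2}-2\binom{s}{2}$ per edge of $T_2(a,b)$. Summing gives the numerator $a\binom{s}{2}+b(\binom{t}{2}-2\binom{s}{2})$. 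The one subtlety is confirming that every type-(b) edge truly has a non-root endpoint: this could fail only for an edge between $S_i$ and $S_j$ with both $v_i, v_j$ rooted in $T_2(a,b)$, but rooted vertices of $T_2(a,b)$ are leaves and so never adjacent to each other.

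For the $K_t$-tree structure, I would take the $b$ natural cliques $S_i \cup S_j \cup S_{i,j}$ (one per edge $v_iv_j$ of $T_2(a,b)$), each of size $s+s+(t-2s)=t$ and collectively covering $V(T_t^1(a,b,s))$. Because $T_2(a,b)$ is a tree, I can choose a BFS/DFS ordering $e_1, \dots, e_b$ of its edges such that every $e_i$ with $i \geq 2$ shares exactly one vertex, say $v$, with $\bigcup_{k<i} e_k$; I set $K^{(i)}$ to be the clique attached to $e_i$ and $j_i$ to any smaller index whose edge contains $v$. The only part of $K^{(i)}$ appearing earlier is then $S_v$, since the other $S$ and the spike set $S_{e_i}$ are fresh, so $V(K^{(i)}) \cap V(K^{(j_i)}) = S_v = V(K^{(i)}) \cap (V(K^{(1)}) \cup \cdots \cup V(K^{(i-1)}))$, which has size $s$. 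Since $1 \leq s < t$, this satisfies both conditions of Definition~\ref{def::Kttree} and realizes glue size exactly $s$.

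The root description is immediate: the roots are the union of the $S_i$'s for rooted $v_i$, each of which induces a $K_s$; no two distinct rooted $S_i, S_j$ are joined by any edge (by the same leaf-non-adjacency argument) and no spike vertex lies in the root, so the root-induced subgraph is a disjoint union of $K_s$'s. There is no serious obstacle — the arithmetic identity $s^2+\binom{t-2s}{2}+2s(t-2s)=\binom{t}{2}-2\binom{s}{2}$ used implicitly in the density calculation is just the statement that $K_t$ partitions into two disjoint $K_s$'s plus the remaining edges — and the only recurring bookkeeping point across the three parts is that the rooted vertices of $T_2(a,b)$ are leaves and hence pairwise non-adjacent.
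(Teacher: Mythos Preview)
Your proof is correct and follows essentially the same approach as the paper's: both count unrooted vertices as $as + b(t-2s)$, count edges by separating the internal $K_s$'s from the remaining $\binom{t}{2}-2\binom{s}{2}$ edges per $K_t$, and identify the $b$ copies of $K_t$ coming from the edges of $T_2(a,b)$ as the witness sequence with pairwise intersections of size $s$. If anything, you are slightly more careful than the paper in two places --- you explicitly justify why no edge of $T_2(a,b)$ joins two rooted vertices (they are leaves), and you spell out the tree-edge ordering verifying the witness conditions of Definition~\ref{def::Kttree} --- whereas the paper leaves both points implicit.
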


\begin{proof}
	The value of $d_{t}^1(a,b,s)$ is immediate: each of the $a$ unrooted vertices in $T_2(a,b)$ corresponds to an $s$-vertex clique in $T_t^1(a,b,s)$, contributing a total of $sa$ unrooted vertices and $a \binom{s}{2}$ unrooted edges. Each edge $v_iv_j$ in $T_2(a,b)$ contributes all possible edges between the copies of $K_s$ corresponding to $v_i, v_j$, an additional clique on $t-2s$ vertices, and all edges between this copy of $K_{t - 2s}$ and the two copies of $K_s$ corresponding to $v_i$ and $v_j$.  Thus $T_t^1(a,b,s)$ has $as + b(t-2s)$ unrooted vertices and $a\binom{s}{2} + b\left( \binom{t}{2} - 2\binom{s}{2} \right)$ unrooted edges.
	
	To see that $T_t^1(a,b,s)$ is a $K_t$-tree, note that each edge of $T_2(a,b)$ corresponds to a copy of $K_t$ in $T_t^1(a,b,s)$, and these copies of $K_t$ intersect each other in $s$-sets. Finally, the vertices in the root of $T_t^1(a,b,s)$ correspond to vertices in the root of $T_2(a,b)$, and since there were no edges in the root of $T_2(a,b)$, the only edges in the root of $T_t^1(a,b,s)$ are the edges contained in the copies of $K_s$ corresponding to each rooted vertex of $T_2(a,b)$.
\end{proof}
	
	To show that Construction~\ref{generalized spike construction} is balanced, we require the following technical lemmas. Recall that $N[v]$ denotes the \emph{closed neighborhood of $v$}, that is the set containing $v$ and all vertices adjacent to $v$ in the given graph.
	
	\begin{lemma}\label{all or nothing lemma}
		Let $(F,R)$ be a rooted graph, and let $T\subseteq V(F)\setminus R$ be a set of vertices such that $N[u]=N[v]$ for all $u,v\in T$. Then for any set $S\subseteq V(F)\setminus R$, either
		\[
		d(S\setminus T)\leq d(S),\text{ or }d(S\cup T)\leq d(S).
		\]
	\end{lemma}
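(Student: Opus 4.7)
My approach is to reduce the lemma to an arithmetic inequality about two explicit ``local densities'' induced by the twin structure on $T$. First, I would unpack the hypothesis $N[u]=N[v]$ for all $u,v\in T$: for distinct $u,v\in T$, $v\in N[v]=N[u]$ gives $uv\in E(F)$, so $T$ is a clique, and every vertex of $T$ has the same neighborhood in $V(F)\setminus T$.

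Next, partition $T=T_0\sqcup T_1$ with $T_0=T\cap S$ and $T_1=T\setminus S$, and let $S_0=S\setminus T$ and $W=V(F)\setminus(S\cup T)$. If $T_0=\emptyset$ or $T_1=\emptyset$, then $S\setminus T=S$ or $S\cup T=S$ respectively, and the lemma is immediate; so assume $t_0:=|T_0|\geq 1$ and $t_1:=|T_1|\geq 1$. Let $a_W$ denote the common number of neighbors in $W$ of each vertex of $T$. The twin structure gives the clean edge-count increments
\[
\Delta_1:=e(S)-e(S\setminus T)=\binom{t_0}{2}+t_0t_1+a_Wt_0,\quad \Delta_2:=e(S\cup T)-e(S)=\binom{t_1}{2}+a_Wt_1,
\]
so $\Delta_1/t_0=\tfrac{t_0-1}{2}+t_1+a_W$ and $\Delta_2/t_1=\tfrac{t_1-1}{2}+a_W$. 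The key identity is $\Delta_1/t_0-\Delta_2/t_1=(t_0+t_1)/2>0$.

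Now I apply the mediant inequality twice. Since $d(S)=(e(S\setminus T)+\Delta_1)/(|S_0|+t_0)$ is a weighted mediant of $d(S\setminus T)$ and $\Delta_1/t_0$, the strict inequality $d(S\setminus T)>d(S)$ would force $d(S)>\Delta_1/t_0$. Similarly, $d(S\cup T)=(e(S)+\Delta_2)/(|S|+t_1)$ is a weighted mediant of $d(S)$ and $\Delta_2/t_1$, so $d(S\cup T)>d(S)$ would force $d(S)<\Delta_2/t_1$. If both strict inequalities held simultaneously, we would obtain $\Delta_1/t_0<d(S)<\Delta_2/t_1$, contradicting the identity above. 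The corner case $S_0=\emptyset$ (so $S=T_0$, and $d(S\setminus T)$ is undefined) requires a quick direct check from the same formulas that $d(T)\leq d(T_0)$, so the second inequality always holds in that case.

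The main ``aha'' is recognizing that the two local densities $\Delta_1/t_0$ and $\Delta_2/t_1$ separate by exactly $(t_0+t_1)/2$; given that, everything else is mechanical. The only routine work is verifying the edge-count expressions for $\Delta_1$ and $\Delta_2$, which is immediate from $T$ being a clique with a common external neighborhood, so I do not anticipate any serious obstacle beyond bookkeeping.
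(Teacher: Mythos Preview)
Your argument is correct. The edge-count formulas for $\Delta_1$ and $\Delta_2$ are right once one notes $W=V(F)\setminus(S_0\cup T)$, the mediant inferences are valid (if $d(S\setminus T)>d(S)$ strictly then the mediant must lie strictly between $\Delta_1/t_0$ and $d(S\setminus T)$, forcing $\Delta_1/t_0<d(S)$; similarly on the other side), and the corner case $S_0=\emptyset$ is handled correctly since then $d(T_0)-d(T)=t_1/2>0$.

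Your route is genuinely different from the paper's. The paper sets $S'=S\setminus T$, lets $T'\subseteq T$ range over subsets of size $x$, and writes $d(S'\cup T')=f(x)$ as an explicit rational function of the real variable $x$; a short calculus argument (one positive real root of $f'$, with $f'(0)>0$) shows $f$ is minimized at an endpoint of $[0,|T|]$, so $d(S)=f(t_0)\geq\min\{f(0),f(|T|)\}=\min\{d(S\setminus T),d(S\cup T)\}$. Your approach replaces this analytic endpoint argument with a purely combinatorial one: you isolate the two ``marginal densities'' $\Delta_1/t_0$ and $\Delta_2/t_1$, observe the clean identity $\Delta_1/t_0-\Delta_2/t_1=(t_0+t_1)/2>0$, and then the mediant inequality does the rest. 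Your proof is more elementary (no derivatives) and makes the reason for the dichotomy transparent; the paper's version is slightly stronger in that it simultaneously shows the minimum of $d(S'\cup T')$ over \emph{all} $T'\subseteq T$ is attained at $T'=\emptyset$ or $T'=T$, though only the special case $T'=T\cap S$ is ever used.
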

	
	\begin{proof}
		Let $S':=S\setminus T$, and let $T'\subseteq T$ be any set of $x$ vertices (for some $0\leq x\leq |T|$). For a vertex $v\in T$, let $r$ denote the number of edges from $v$ to $V(F) \setminus (S' \cup T)$. Then,
		\[
		d(S'\cup T')=\frac{e(S')+rx+\binom{x}{2}+x(|T|-x)}{|S'|+x}=\frac{-x^2/2+(r+|T|-1/2)x+e(S')}{|S'|+x}=:f(x).
		\]
		Considering $f$ as a function of the continuous variable $x$ on the interval $[0,|T|]$, we have
		\[
		f'(x)=\frac{-x^2/2-|S'|x+(r+|T|-1/2)|S'|-e(S')}{(|S'|+x)^2}.
		\]
		$f'$ has roots $x=-|S'|\pm \sqrt{|S'|^2+2(r+|T|-1/2)|S'|-2e(S')}$. If neither of these are positive reals, then $f'$ is monotone on $[0,|T|]$, and thus is minimized at either $0$ or $|T|$. Otherwise, $f'$ has exactly one positive real root $\left(x=-|S'|+\sqrt{|S'|^2+2(r+|T|-1/2)|S'|-2e(S')}\right)$, and for this to be positive, we must have $e(S')<(r+|T|-1/2)|S'|$. Using the preceding inequality, we can see that $f'(0)>0$, so $f$ is increasing at $0$ and has at most one critical value on $[0,|T|]$, so $f$ is minimized at $x=0$ or $x=|T|$.
	\end{proof}

For a graph $G$ and disjoint sets $X$ and $Y$ of vertices of $G$, the induced bipartite graph $G[X,Y]$ is the bipartite graph with parts $X$ and $Y$ which includes exactly those edges between $X$ and $Y$ in $G$.

\begin{lemma}\label{lemma density is an average}
	Let $(G,R)$ be a rooted graph, and let $X,Y\subseteq V(G)\setminus R$ be disjoint sets of unrooted vertices. Then,
	\[
	\min\left\{d(X),\frac{e(Y)-|E(G[X,Y])|}{|Y|}\right\}\leq d(X\cup Y)\leq \max\left\{d(X),\frac{e(Y)-|E(G[X,Y])|}{|Y|}\right\}
	.\]
\end{lemma}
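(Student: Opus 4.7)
The plan is to rewrite $d(X\cup Y)$ as the mediant of $d(X)$ and $\frac{e(Y) - |E(G[X,Y])|}{|Y|}$, at which point both inequalities fall out of the elementary fact that the mediant of two fractions with positive denominators lies between them.

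The only nontrivial ingredient is the edge-counting identity
\[
e(X \cup Y) \;=\; e(X) \;+\; \bigl(e(Y) - |E(G[X,Y])|\bigr).
\]
This just says that an edge is incident to $X \cup Y$ if and only if it is incident to $X$ or it is incident to $Y$ without being an $X$-$Y$ edge; the correction $|E(G[X,Y])|$ removes the double-counting of edges having one endpoint in $X$ and one in $Y$, which is the only overlap since $X$ and $Y$ are disjoint. Note that edges going from $X \cup Y$ into $R$ (or into $V(G) \setminus (R \cup X \cup Y)$) are counted correctly on both sides, so the rooted/unrooted distinction plays no role beyond ensuring that $X, Y \subseteq V(G) \setminus R$.

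Given the identity, set $A := e(X)$, $B := e(Y) - |E(G[X,Y])|$, $a := |X|$, and $b := |Y|$, with $a, b > 0$. Then
\[
d(X) = \frac{A}{a},\qquad \frac{e(Y) - |E(G[X,Y])|}{|Y|} = \frac{B}{b},\qquad d(X \cup Y) = \frac{A+B}{a+b}.
\]
The mediant inequality $\min\{A/a, B/b\} \leq (A+B)/(a+b) \leq \max\{A/a, B/b\}$ is immediate from cross-multiplication: if $A/a \leq B/b$ then $Ab \leq Ba$, from which adding $Aa$ and $Bb$ respectively gives $A(a+b) \leq (A+B)a$ and $(A+B)b \leq B(a+b)$. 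This is exactly the conclusion of the lemma.

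There is no real obstacle; the entire content is the inclusion-exclusion identity for $e(X \cup Y)$. I expect this lemma will be iterated later to analyze arbitrary subsets $S \subseteq V(G)\setminus R$ (building $S$ up one piece at a time) when verifying that the $K_t$-tree constructions from Section~\ref{section constructions} are balanced, pairing naturally with Lemma~\ref{all or nothing lemma} to reduce to ``canonical'' choices of $S$.
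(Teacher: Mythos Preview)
Your proof is correct and follows essentially the same approach as the paper: both establish the identity $e(X\cup Y)=e(X)+e(Y)-|E(G[X,Y])|$ and then invoke the mediant inequality. You give slightly more detail on both steps, but the argument is the same.
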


\begin{proof}
Observing that 
\[ d(X\cup Y)=\frac{e(X)+e(Y)-|E(G[X,Y])|}{|X|+|Y|} \quad \text{ and } \quad d(X) = \frac{e(X)}{|X|} ,\]
the lemma directly follows from the so-called mediant inequality: for fractions $a/b \leq c/d$ with $b, d > 0$, we have
\[ \frac{a}{b} \leq \frac{a+c}{b+d} \leq \frac{c}{d} .\qedhere\]
\end{proof}
	
	Now, we are ready to demonstrate that Construction~\ref{generalized spike construction} is balanced in the appropriate density ranges.
	
	\begin{prop}\label{proposition generalized spike properties}
		Let $t \geq 3$ and positive integers $a,b,s$ be such that $a+1 \leq b $ and $2s \leq t$. If either $2s=t$ or $d_{t}^1(a,b,s) \leq \frac{t + 2s - 1}{2}$, then $T_t^1(a,b,s)$ is balanced.
	\end{prop}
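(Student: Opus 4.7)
The plan is to apply Lemma~\ref{all or nothing lemma} to reduce to subsets which are unions of ``atomic'' blocks, and then bound $d(S)$ via a combination of Proposition~\ref{prop::balanced:BC} and a \emph{spike-closure} step justified by Lemma~\ref{lemma density is an average}. Within each tree block $S_i$ (for unrooted $v_i$) and each spike block $S_{i,j}$, all vertices share the same closed neighborhood, making each block a twin class. Iteratively applying Lemma~\ref{all or nothing lemma} reduces the problem to showing $d(S) \geq d_{t}^1(a,b,s)$ for every subset $S = \bigcup_{i \in I} S_i \cup \bigcup_{\{i,j\} \in J} S_{i,j}$ parameterized by $I \subseteq \{i : v_i \text{ unrooted}\}$ and $J \subseteq E(T_2(a,b))$.

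A key preliminary observation is that every vertex in $V(T_t^1(a,b,s)) \setminus R$ has degree at least $t-1$: spike vertices have degree exactly $t-1$, and each vertex in an unrooted tree block has degree $(s-1) + \deg_{T_2(a,b)}(v_i)(t-s) \geq 2t-s-1 \geq t-1$ (using $\deg_{T_2(a,b)}(v_i) \geq 2$ from Proposition~\ref{prop::balanced:BC}). The standard degree-sum bound then gives $d(S) \geq (t-1)/2$ for every non-empty union-of-blocks $S$. Letting $E_I$ denote the edges of $T_2(a,b)$ incident to $I$, define the spike-closure $S^+ := S \cup \bigcup_{v_iv_j \in E_I} S_{i,j}$. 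Each added spike block has at least one endpoint in $I$, and a direct computation shows its marginal density given $S$ is $(t-2s-1)/2$ or $(t-1)/2$ depending on whether both or one endpoint lies in $I$, hence at most $(t-1)/2 \leq d(S)$. Lemma~\ref{lemma density is an average} then gives $d(S^+) \leq d(S)$.

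Finally, decompose $S^+ = S_I \sqcup T$, where $S_I := \bigcup_{i \in I} S_i \cup \bigcup_{v_iv_j \in E_I} S_{i,j}$ and $T$ collects the isolated spikes $S_{i,j} \in S$ with $\{i,j\} \notin E_I$. A direct calculation using $|E_I|/|I| \geq b/a$ (from Proposition~\ref{prop::balanced:BC}) and the positivity of $sM - (t-2s)\binom{s}{2} = st(t-s)/2$ (where $M := \binom{t}{2} - 2\binom{s}{2}$) shows $d(S_I) \geq d_{t}^1(a,b,s)$ whenever $I$ is non-empty. Each spike in $T$ has no neighbors in $S_I$, so the marginal density of $T$ given $S_I$ equals $(t+2s-1)/2$, which by hypothesis is at least $d_{t}^1$ (or vacuously when $2s=t$, since then $T = \emptyset$). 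Lemma~\ref{lemma density is an average} then yields $d(S^+) \geq d_{t}^1$, and combining gives $d(S) \geq d(S^+) \geq d_{t}^1$. The case $I = \emptyset$ is direct: $S$ is a disjoint union of spike blocks with density $(t+2s-1)/2 \geq d_{t}^1$. The main subtlety to watch for is a potentially circular use of Lemma~\ref{lemma density is an average} in the spike-closure step; the unconditional lower bound $d(S) \geq (t-1)/2$ from the degree-sum argument resolves this, since it ensures the relevant marginals are at most $d(S)$ without presupposing $d(S) \geq d_{t}^1$.
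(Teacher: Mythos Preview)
Your proof is correct and follows essentially the same approach as the paper: reduce to unions of twin blocks via Lemma~\ref{all or nothing lemma}, perform a spike-closure step justified by Lemma~\ref{lemma density is an average}, and finish using the balancedness of $T_2(a,b)$ together with the monotonicity of $x\mapsto\frac{\binom{s}{2}+xM}{s+x(t-2s)}$. The only cosmetic differences are that the paper first passes to a connected $A$ (so the isolated-spike case is a single block rather than your $T$-term) and phrases the spike-closure as the implication ``$d(A\cup S_{i,j})\geq d_t\Rightarrow d(A)\geq d_t$'' via marginal $<d_t$, whereas you obtain the equivalent inequality $d(S^+)\leq d(S)$ from the degree-sum bound $d(S)\geq (t-1)/2$.
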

	
	\begin{proof}
		For ease of notation, let $T_t:=T_t^1(a,b,s)$, $T_2:=T_2(a,b)$ and $d_t:=d_{t}^1(a,b,s)$. Let $A\subseteq V(T_t)\setminus R$ be a non-empty set; we will prove that $d(A)\geq d_t$. We may assume that $A$ induces a connected subgraph of $T_t$, since $d(A)$ is bounded below by the density of the sparsest connected component of $T_t[A]$. Moreover, by Lemma~\ref{all or nothing lemma}, for any $v_i \in V(T_2)$, we may assume that the corresponding vertex set $S_i \subset V(T_t)$ is either contained in $A$ or disjoint from $A$. Likewise, for any edge $v_iv_j \in E(T_2)$, we may assume that the corresponding vertex set $S_{i,j} \subset V(T_t)$ is either contained in $A$ or disjoint from $A$.
		
		\textbf{Case 1:} $A=S_{i,j}$ for some valid choice of $i,j$. If $2s=t$, then $S_{i,j}$ is empty, so we may assume $2s<t$. We then have
		\[
		d(A) = \frac{\binom{t-2s}{2} + 2s(t-2s)}{t-2s} = \frac{t + 2s - 1}{2}\geq d_t.
		\]
		
		\textbf{Case 2:} $A\cap S_i\neq \emptyset$ for some valid choice of $i$. Set
		\[
		V' := \{v_i \in V(T_2)\mid S_i \subseteq A\}
		\]
		and
		\[
		E' := \{v_iv_j \in E(T_2)\mid S_{i,j} \subseteq A\}.
		\]
		Note that, since $A$ induces a connected subgraph of $T_t$, every edge in $E'$ is incident to some vertex in $V'$. We next establish the following claim, showing that we may assume that every edge incident to $V'$ is contained in $E'$.
		
		\begin{claim}\label{spike clique claim}
			
			Suppose $v_i \in V'$ and there exists $v_iv_j \in E(T_2)\setminus E'$. Then $d(A \cup S_{i,j}) \geq d_t$ implies $d(A) \geq d_t$.
			
		\end{claim}
	
	\begin{proof}[Proof of Claim~\ref{spike clique claim}]
		By Lemma~\ref{lemma density is an average}, we have that
		\[
		d(A\cup S_{i,j})\leq \max\left\{d(A),\frac{e(S_{i,j})-|E(T_t[A,S_{i,j}])|}{|S_{i,j}|}\right\}.
		\]
		
		Note that the condition $b > a$ implies $d_t > \frac{t+s-1}{2} > \frac{t-1}{2}$. We have that $|E(T_t[A,S_{i,j}])|$ is either $2s(t-2s)$ (if both $v_i,v_j \in V'$) or $s(t-2s)$ (if $v_i \in V'$ but $v_j \not\in V'$), and $e(S_{i,j})=\binom{t-2s}{2}+2s(t-2s)$, so
		\begin{equation}
			\frac{e(S_{i,j})-|E(T_t[A,S_{i,j}])|}{|S_{i,j}|}\leq \frac{\binom{t-2s}{2}+s(t-2s)}{t-2s}=\frac{t-1}{2}<d_t.
		\end{equation}
	Thus, if $d(A \cup S_{i,j}) \geq d_t$, then we have $d(A) \geq d(A \cup S_{i,j}) \geq d_t$.
	\end{proof}
		
		By Claim~\ref{spike clique claim}, showing that $d(A \cup \bigcup \{S_{i,j}: v_i \in V'\}) \geq d_t$ proves that $d(A) \geq d_t$, so we may assume $\{S_{i,j}: v_i \in V'\} \subseteq A$, or in other words, that $E'$ is equal to the set of edges incident to $V'$ in $T_2$. Thus, in $T_2$, we have 
		\[
		d(V') = \frac{|E'|}{|V'|} \geq \frac{b}{a}=d(T_2)
		\] 
		since $T_2$ is balanced. This implies the following inequality:
		\[
		d(A) = \frac{|V'|\binom{s}{2} + |E'|\left(\binom{t}{2} - 2\binom{s}{2} \right)}{|V'|s + |E'|(t- 2s)} \geq \frac{a\binom{s}{2} + b(\binom{t}{2} - 2\binom{s}{2} )}{as + b(t- 2s)} = d_t ,
		\]
            which is easy to see if $t = 2s$, and otherwise follows from the fact that $\frac{\binom{s}{2}}{s} \leq \frac{\binom{t}{2} - 2\binom{s}{2}}{t-2s}$ (which itself is equivalent to $s \leq t$).
	\end{proof}

	Note that when $t$ is odd, $T_t^1(a,b,s)$ always has local density obstructions; the sets $S_{i,j}$ have size $t-2s\neq 0$, and $d(S_{i,j})=\frac{t + 2s - 1}{2}$, so the density condition in Proposition~\ref{proposition generalized spike properties} cannot be removed. This prevents our first construction from covering the entire range of densities necessary for odd $t$. On the other hand, for even $t$ we can construct balanced $K_t$-trees $T_t^1(a,b,s)$ of arbitrarily high rooted density, taking $s = t/2$ to avoid any local density obstruction. Thus, for our purposes, the possible densities achieved by Construction~\ref{generalized spike construction} are in $(t/2, t-1]$ when $t$ is odd and in $(t/2, \infty)$ when $t$ is even. It remains to verify that all rational densities in these intervals are achievable by Construction~\ref{generalized spike construction}; we do so now.
	
	\begin{prop}\label{prop::rationaltype1}
		Let $t\geq 3$, $d\in \mathbb{Q}$ with $d > t/2$ and $s\in \mathbb{N}$. If either of the following are satisfied:
		\begin{itemize}
			\item $s<t/2$ and $d\in \left( \frac{t+s-1}{2}, \frac{t + 2s - 1}{2} \right]$, or
			\item $t$ is even, $s=t/2$ and $d\in \left(\frac{3t-2}{4},\infty\right)$,
		\end{itemize}
		then there exists integers $0<a<b$ such that 
		\[
		d_t^1(a,b,s) = d.
		\]
\end{prop}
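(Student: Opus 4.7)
The plan is to solve the equation $d_t^1(a,b,s) = d$ as a linear equation in $(a,b)$, then choose natural integer solutions and verify they lie in the valid range $0 < a < b$. Let $\alpha = \binom{s}{2}$, $\beta = \binom{t}{2} - 2\binom{s}{2}$, $\gamma = s$, and $\delta = t-2s$, so that $d_t^1(a,b,s) = (a\alpha + b\beta)/(a\gamma + b\delta)$, and write $d = p/q$ in lowest terms with $q > 0$.

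For the first case ($s < t/2$, so $\delta > 0$), I would take $a := q\beta - p\delta$ and $b := p\gamma - q\alpha$. These are integers by construction, and a direct expansion yields $a\alpha + b\beta = p(\beta\gamma - \alpha\delta)$ and $a\gamma + b\delta = q(\beta\gamma - \alpha\delta)$, so the density equals exactly $p/q = d$ provided $\beta\gamma - \alpha\delta \neq 0$. A quick computation shows $\beta\gamma = \alpha\delta$ only when $s = t$, which is excluded.

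The remaining work is to verify positivity and ordering. Checking $b > 0$ reduces to $d > \alpha/\gamma = (s-1)/2$, which is immediate from $d > (t+s-1)/2$. Checking $b > a$ reduces to $d > (\alpha+\beta)/(\gamma+\delta) = (t+s-1)/2$, which is precisely the hypothesis. Checking $a > 0$ reduces to $d < \beta/\delta$, and this is where the key calculation sits: expanding $2\binom{t}{2} - 4\binom{s}{2} - (t-2s)(t+2s-1) = 2s^2$ yields $\beta/\delta - (t+2s-1)/2 = s^2/(t-2s) > 0$, so the hypothesis $d \leq (t+2s-1)/2$ gives $d < \beta/\delta$ strictly.

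The second case ($t$ even, $s = t/2$, $\delta = 0$) is simpler: the density reduces to $(s-1)/2 + b\beta/(as)$, so solving for $b/a$ yields $b/a = s(d - (s-1)/2)/\beta$, and realizing this as a positive rational greater than $1$ (ensured by $d > (3t-2)/4 = (t+s-1)/2$) gives integer choices of $a$ and $b$. The main technical obstacle, in either case, is the identity $\beta/\delta - (t+2s-1)/2 = s^2/(t-2s)$ that pins down the upper endpoint of the achievable interval; once this is in hand, the rest of the proof is mechanical verification of the displayed formulas for $a$ and $b$.
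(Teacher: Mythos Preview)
Your proposal is correct and essentially identical to the paper's proof: the paper writes $d = x/y$ and chooses exactly your $a = y\beta - x\delta$ and $b = x\gamma - y\alpha$ (in your notation), then verifies the same three inequalities, including the key observation that $\beta/\delta > (t+2s-1)/2$. The only cosmetic difference is that the paper does not insist on lowest terms and, in the $s=t/2$ case, plugs directly into the same formula rather than first isolating $b/a$.
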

	
	\begin{proof}
		Let $d=\frac{x}{y}$ for $x,y\in\mathbb{N}$. For positive integers $s,t$ with $s < t/2$, we observe that for any rational $\frac{x}{y} \in \left( \frac{t+s-1}{2}, \frac{t + 2s - 1}{2} \right]$, we can achieve $d_{t}^1(a,b,s) = \frac{x}{y}$ by choosing
		\[
		a = y\left( \binom{t}{2} - 2 \binom{s}{2}\right) - x(t - 2s)
		\]
		and
		\[
		b = sx - y \binom{s}{2}.
		\]
		We must verify that for $\frac{x}{y}$ in the desired interval, $a$ and $b$ satisfy the conditions $a > 0$ and $a < b$. The condition that $a>0$ is equivalent to 
		\[
		\frac{\binom{t}{2} - 2 \binom{s}{2}}{t - 2s} > \frac{x}{y};
		\]
		note that 
		\[
		\frac{\binom{t}{2} - 2 \binom{s}{2}}{t - 2s} > \frac{t + 2s - 1}{2}.
		\]
		Thus, $\frac{x}{y} \leq \frac{t + 2s - 1}{2}$ implies $a > 0$. 
		The condition $a > b$ is equivalent to 
		\[
		\frac{x}{y} > \frac{t + s - 1}{2}.
		\]

		If $t$ is even and we take $s = \frac{t}{2}$, then for any rational $\frac{x}{y} \in \left[ \frac{3t - 2}{4}, \infty \right)$, we can achieve density $\frac{x}{y}$ by choosing $a,b$ as above. Indeed, when $s=\frac{t}{2}$, we have
		\[
		a=y\left( \binom{2s}{2} - 2 \binom{s}{2}\right)=ys^2>0,
		\]
		while the condition that $a < b$ remains equivalent to 
		\[
		\frac{x}{y} > \frac{t + s - 1}{2}=\frac{3t - 2}{4}.
		\]
		Thus, we can achieve all densities $\frac{x}{y} \in \left( \frac{3t - 2}{4}, \infty \right)$.
	\end{proof}

	When $t$ is odd, we require a generalization of Construction~\ref{constr::3.2} to achieve rational densities above $t-1$. 
	
	\begin{construction}\label{generalized fan construction}
		
		Fix positive integers $a,b,t$, such that $t \geq 3$ is odd, $a$ is even, and $2a + 2 \leq b$. To construct a \textbf{Type 2 $K_t$-tree} with parameters $a,b$, we replace each unrooted vertex of $T_3(a,b)$, and the leftmost and rightmost rooted neighbors of $x_1$ and $x_a$ respectively ($\ell_1$ or $z_1$, and $r_a$ or $w_a$, respectively, see Construction~\ref{constr::3.2}), by a clique of size $\frac{t-1}{2}$. Each edge of $T_3(a,b)$ is replaced by a complete bipartite graph between the cliques at its endpoints.
		
		We denote the above construction as $T_t^2(a,b)$. The root of $T_t^2(a,b)$ consists of all the vertices which were rooted in $T_3(a,b)$, along with the new vertices in the cliques which replaced the leftmost and rightmost rooted neighbors of $x_1$ and $x_a$ respectively.
		
	\end{construction}

	\begin{figure}[ht]
		\begin{center}
			\begin{tikzpicture}

				\filldraw (-1,0) circle (0.05 cm);
				
				\filldraw (0,0) circle (0.05 cm);
				
				\filldraw (1,0) circle (0.05 cm);
				
				\filldraw (2,0) circle (0.05 cm);
				
				\filldraw (3,0) circle (0.05 cm);
				
				\filldraw (4,0) circle (0.05 cm);
				
				\filldraw (-1,-1) circle (0.05 cm);
				\filldraw (0, -1) circle (0.05 cm);
				\filldraw (2,-1) circle (0.05 cm);
				\filldraw (3.5,-1) circle (0.05 cm);
				\filldraw (4,-1) circle (0.05 cm);
				
				\draw (-1,0) -- (4,0);
				
				\draw (-1,0) -- (0,-1);
				\draw (0,0) -- (0,-1);
				\draw (1,0) -- (0,-1);
				\draw (-1,0) -- (-1,-1);
				\draw (-1,-1) -- (0,-1);
				
				\draw (1,0) -- (2,-1);
				\draw (2,0) -- (2,-1);
				\draw (3,0) -- (2,-1);
				
				\draw (3,0) -- (3.5,-1);
				\draw (4,0) -- (3.5,-1);
				
				\draw (4,0) -- (4,-1);
				\draw (3.5,-1) -- (4,-1);


				\draw (6,0) -- (7,-1);
				\draw (7,0) -- (7,-1);
				\draw (8,0) -- (7,-1);
				
				\draw (6,-1) -- (7,-1);
				
				\draw (8,0) -- (9,-1);
				\draw (9,0) -- (9,-1);
				\draw (10,0) -- (9,-1);
				
				\draw (10,0) -- (10,-1);
				\draw (11,0) -- (10,-1);

				\draw (10,-1) -- (11,-1);
				
				\draw (6,0.2) -- (7,-0.2);
				\draw (6,-0.2) -- (7,0.2);
				\draw (6,0.2) -- (7,0.2);
				\draw (6,-0.2) -- (7,-0.2);
				
				\draw (7,0.2) -- (8,-0.2);
				\draw (7,-0.2) -- (8,0.2);
				\draw (7,0.2) -- (8,0.2);
				\draw (7,-0.2) -- (8,-0.2);
				
				\draw (8,0.2) -- (9,-0.2);
				\draw (8,-0.2) -- (9,0.2);
				\draw (8,0.2) -- (9,0.2);
				\draw (8,-0.2) -- (9,-0.2);
				
				\draw (9,0.2) -- (10,-0.2);
				\draw (9,-0.2) -- (10,0.2);
				\draw (9,0.2) -- (10,0.2);
				\draw (9,-0.2) -- (10,-0.2);
				
				\draw (10,0.2) -- (11,-0.2);
				\draw (10,-0.2) -- (11,0.2);
				\draw (10,0.2) -- (11,0.2);
				\draw (10,-0.2) -- (11,-0.2);
				
				\draw (6.3,-0.1) -- (5.8, - 0.8);
				\draw (5.75,0.1) -- (6.2, -0.9);
				\draw (5.8, 0.1) -- (5.8, -1);
				\draw (6.2, 0.1) -- (6.2, -1.1);
				
				\draw (11.25,-0.08) -- (10.8, - 0.8);
				\draw (10.75,0.1) -- (11.2, -0.9);
				\draw (10.8, 0.1) -- (10.8, -1);
				\draw (11.2, 0.1) -- (11.2, -1.1);
				
				\draw (6,-0.8) -- (7,-1);
				\draw (6,-1.2) -- (7,-1);
				
				\draw (11,-0.8) -- (10,-1);
				\draw (11,-1.2) -- (10,-1);
				
				\draw (5.8, -0.1) -- (7,-1);
				\draw (6.2, 0.1) -- (7,-1);
				
				\draw (10.8, 0.1) -- (10,-1);
				\draw (11.2, -0.1) -- (10,-1);
				
				\draw (7,-1) -- (6.8,0);
				\draw (7,-1) -- (7.2,0);
				
				\draw (9,-1) -- (8.8,0);
				\draw (9,-1) -- (9.2,0);
				
				\draw (10,-1) -- (9.8,0);
				\draw (10,-1) -- (10.2,0);
				
				\draw (9.8, 0.1) -- (9,-1);
				\draw (10.2, -0.1) -- (9,-1);
				
				\draw (7.8, 0.1) -- (7,-1);
				\draw (8.2, -0.1) -- (7,-1);
				
				\draw (7.8, -0.1) -- (9,-1);
				\draw (8.2, 0.1) -- (9,-1);
				
				\draw (-2,-.7) -- (12,-.7);

				\filldraw[rotate around = {-40.25:(6,0)}, fill = blue!15!white] (6,0) ellipse (0.3 cm and 0.2 cm);
				\filldraw[rotate around = {-40.25:(7,0)}, fill = blue!15!white] (7,0) ellipse (0.3 cm and 0.2 cm);
				\filldraw[rotate around = {-40.25:(8,0)}, fill = blue!15!white] (8,0) ellipse (0.3 cm and 0.2 cm);
				\filldraw[rotate around = {-40.25:(9,0)}, fill = blue!15!white] (9,0) ellipse (0.3 cm and 0.2 cm);
				\filldraw[rotate around = {-40.25:(10,0)}, fill = blue!15!white] (10,0) ellipse (0.3 cm and 0.2 cm);
				\filldraw[rotate around = {-40.25:(11,0)}, fill = blue!15!white] (11,0) ellipse (0.3 cm and 0.2 cm);
				
				\filldraw[rotate around = {-40.25:(6,-1)}, fill = blue!15!white] (6,-1) ellipse (0.3 cm and 0.2 cm);
				\filldraw[rotate around = {-40.25:(11,-1)}, fill = blue!15!white] (11,-1) ellipse (0.3 cm and 0.2 cm);
				
				\filldraw (7, -1) circle (0.05 cm);
				\filldraw (9,-1) circle (0.05 cm);
				\filldraw (10,-1) circle (0.05 cm);

			\end{tikzpicture}
			
			\caption{$T_3(6,15)$ (left) and $T_t^2(6,15)$ (right). Blue ellipses represent cliques of size $\frac{t-1}{2}$. Rooted vertices are below the line.}    
		\end{center}
	\end{figure}
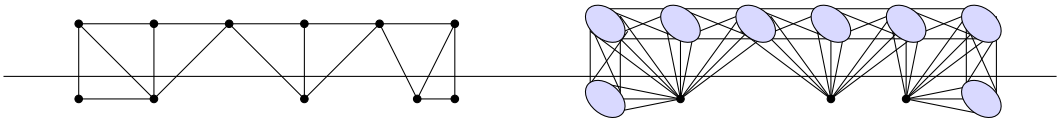

\begin{observation}\label{observation density of Tt2}
	For all odd $t \geq 3$ and positive integers $a,b$ with $a$ even and $2a + 2 \leq b$, the construction $T_t^2(a,b)$ has rooted density
	\[
	d_t^2(a,b) := \frac{a\binom{\frac{t-1}{2}}{2} + (a+1)\left(\frac{t-1}{2}\right)^2 + (b-a-1)\frac{t-1}{2}}{a\frac{t-1}{2}} = \frac{3t-9}{4} + \frac{t-3}{2a} + \frac{b}{a}.
	\]
	Moreover, $T_t^2(a,b)$ is a $K_t$-tree, and the subgraph of $T_t^2(a,b)$ induced by its root set is a disjoint union of isolated vertices and cliques of size $\frac{t+1}{2}$.
\end{observation}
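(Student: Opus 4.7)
My plan is to establish the three claims by directly translating the structure of $T_3(a,b)$ under the blow-up, paralleling the proof of Observation~\ref{observation density of dt1}. Set $m := (t-1)/2$. The unrooted vertices of $T_t^2(a,b)$ are exactly the blow-ups of $x_1, \ldots, x_a$, totaling $am$. Invoking Proposition~\ref{prop::3.2}, which gives that $T_3(a,b)$ has rooted density $b/a$ and hence $b$ edges incident to $\{x_1, \ldots, x_a\}$, I partition these into the $a-1$ spine edges (both endpoints blown up to $m$-cliques), the $2$ edges from $x_1, x_a$ to their replaced rooted neighbors (both endpoints blown up), and the remaining $b-a-1$ edges to regular rooted vertices (only the $x_i$-endpoint blown up). Each edge with both endpoints blown up contributes $m^2$ edges to $T_t^2(a,b)$, each edge with one blown-up endpoint contributes $m$ edges, and each of the $a$ unrooted $m$-cliques contributes $\binom{m}{2}$ internal edges. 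Summing yields the claimed numerator, and routine algebra (using $\binom{m}{2}/m = (t-3)/4$ and $m = (t-1)/2$) produces the closed form $\frac{3t-9}{4} + \frac{t-3}{2a} + \frac{b}{a}$.

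For the $K_t$-tree property, I would enumerate the triangles of $T_3(a,b)$. A quick edge-type case-check shows $T_3(a,b)$ is $K_4$-free, and its triangles are exactly the $a-1$ spine triangles $(x_i, x_{i+1}, u)$ (with $u$ the common rooted neighbor produced by the identifications of Construction~\ref{constr::3.2} or, when $b \geq 3a+2$, the recursive $v_i$'s), the two leaf triangles at $x_1$ and $x_a$ formed by the added edges, and in the recursive case the triangles $(x_{2i-1}, x_{2i}, v_i)$. These triangles cover every edge of $T_3(a,b)$. Each triangle has exactly two vertices that are blown up to $m$-cliques (either two $x_i$'s, or an $x_i$ together with the replaced rooted neighbor at an endpoint) and one single-vertex third vertex, so it blows up to a copy of $K_{2m+1} = K_t$; these $K_t$'s thus cover $T_t^2(a,b)$. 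I would order them as (leaf at $x_1$, spine $K_t$'s left-to-right, recursive $K_t$'s in any order, leaf at $x_a$). The main (mild) subtlety is verifying condition (1) of Definition~\ref{def::Htree}: each new spine $K_t$ intersects the earlier union only in its left $m$-clique, possibly together with a rooted neighbor shared with its immediate predecessor, while each recursive $K_t$ intersects the earlier union in exactly the blown-up spine edge of size $2m$. In every case, the intersection matches the intersection with one specific predecessor and is a proper nonempty subclique.

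Finally, the only edges among the rooted vertices of $T_3(a,b)$ are the two added edges at $x_1$ and $x_a$. In $T_t^2(a,b)$ each such added edge becomes a $K_{m,1}$ joining a replaced $m$-clique to a single vertex, and together with the clique's internal edges it forms a clique of size $m + 1 = \frac{t+1}{2}$ in the root. Every other rooted vertex of $T_3(a,b)$---the identified $u_j$'s, the recursive $v_i$'s, and (in the 3-rooted-neighbor case) the extra un-paired vertex at an endpoint---has no rooted neighbors in $T_3(a,b)$, hence appears as an isolated vertex in the root-induced subgraph of $T_t^2(a,b)$. This yields the claimed disjoint union of two cliques of size $\frac{t+1}{2}$ and isolated vertices.
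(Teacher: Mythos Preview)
Your argument is correct and follows essentially the same route as the paper: count unrooted vertices and incident edges by translating the $T_3(a,b)$ structure through the blow-up, exhibit the $K_t$'s coming from the triangles of $T_3(a,b)$ (the $a+1$ path-triangles plus the recursive ones), and read off the root structure from Proposition~\ref{prop::3.2}. Your density computation is in fact slightly cleaner than the paper's, since you handle the base and recursive cases uniformly via the density of $T_3(a,b)$ rather than by induction.

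One small slip in your final paragraph: not every identified vertex $u_j$ is isolated in the root. When $x_1$ has exactly two rooted neighbors $\ell_1, r_1$, the added edge is $\ell_1 r_1$; here $r_1$ is the identified vertex $u_1$, and it is precisely the single vertex joined to the blown-up $\ell_1$-clique to form the $K_{(t+1)/2}$. (Symmetrically at $x_a$.) So in the $2$-rooted endpoint case the outermost identified vertex sits inside the root clique rather than being isolated, and there is no ``extra un-paired vertex'' left over. This does not affect your conclusion, only the bookkeeping of which root vertices are isolated.
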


\begin{proof}
		The value of $d_t^2(a,b)$ is immediate when $b \leq 3a + 1$: each of the unrooted vertices of $T_2(a,b)$ corresponds to a $\frac{t-1}{2}$-vertex clique in $T_t^2(a,b)$, contributing a total of $a \frac{t-1}{2}$ unrooted vertices and $a \binom{\frac{t-1}{2}}{2}$ unrooted edges within these cliques. Each edge $v_iv_j$ of $T_2(a,b)$ corresponds to $\left(\frac{t-1}{2}\right)^2$ edges in $T_t^2(a,b)$ if $v_i,v_j$ both correspond to $\frac{t-1}{2}$-vertex cliques in $T_t^2(a,b)$, and corresponds to $\frac{t-1}{2}$ edges in $T_t^2(a,b)$ if only one of $v_i,v_j$ corresponds to a $\frac{t-1}{2}$-vertex clique in $T_t^2(a,b)$. Thus, $T_t^2(a,b)$ contains a total of  $a \frac{t-1}{2}$ unrooted vertices and $a \binom{\frac{t-1}{2}}{2} + (a+1)\left(\frac{t-1}{2}\right)^2 + (b - a -1)\frac{t-1}{2}$ unrooted edges. 
		
		For $b > 3a + 1$, we obtain $T_t^2(a,b)$ from $T_t^2(a, b - a)$ by adding $\frac{a}{2}$ new rooted vertices, each incident to $2 \frac{t-1}{2}$ new unrooted edges, so the given formula for $d_t^2(a,b)$ inductively holds for all valid pairs $a,b$.
		
		To see that $T_t^2(a,b)$ is a $K_t$-tree, first let us handle the case when $b\leq 3a+1$. Let $v_1,v_2,\dots,v_{a+2}$ denote the vertices along the path in $T_3(a,b)$ which were replaced by cliques of size $\frac{t-1}{2}$, and for $i\in [a+1]$, let $x_i$ denote the vertex in the root of $T_3(a,b)$ which is in a triangle with $v_i$ and $v_{i+1}$. Let $K^{(i)}$ denote the copy of $K_t$ in $T_t^2(a,b)$ which contains all the vertices corresponding to $v_1,v_{i+1}$ or $x_i$. Then, we can indeed form $T_t^2(a,b)$ by gluing the $K^{(i)}$'s together in order, each clique intersects the last in either $\frac{t+1}{2}$ vertices (if the corresponding $x_i$'s are equal), or $\frac{t-1}{2}$ vertices (if the corresponding $x_i$'s are disjoint). When $b>3a+1$, $T_3(a,b)$ is formed from $T_3(a,b-a)$ by adding $a/2$ new triangles with one new rooted vertex, each such triangle corresponds to a copy of $K_t$ which intersects the clique corresponding to $v_iv_{i+1}$ and $x_i$ in exactly $t-1$ vertices. Finally, the root of $T_t^2(a,b)$ consists of two cliques of size $\frac{t+1}{2}$, one corresponding to the vertices $v_1$ and $x_1$, and one corresponding to $v_{a+2}$ and $x_{a+1}$ (note that $x_1 \neq x_{a+1}$ since $b \geq 2a+2$), along with many isolated vertices, one for each $x_i$ with $2\leq i\leq a$, and one for each additional clique glued stemming from the triangles which were glued to the spine of $T_3(a,b)$ when $b>3a+1$.
\end{proof}
	
	We again use Lemma~\ref{all or nothing lemma} to show that Construction~\ref{generalized fan construction} is balanced.
	
	\begin{prop}\label{prop:type2}
		For all odd $t \geq 3$ and positive integers $a,b$ with $a$ even and $2a + 2 \leq b$, the construction $T_t^2(a,b)$ is balanced.
	\end{prop}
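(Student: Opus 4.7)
The plan is to adapt the strategy of Proposition~\ref{proposition generalized spike properties}, leveraging the fact that $T_t^2(a,b)$ is a partial blow-up of $T_3(a,b)$ and that the balancedness of the latter is already known from Proposition~\ref{prop::3.2}. Setting $k := (t-1)/2$, the $a$ unrooted vertices $x_1,\ldots,x_a$ of $T_3(a,b)$ correspond to pairwise disjoint unrooted $k$-cliques $S_1,\ldots,S_a$ in $T_t^2(a,b)$, and we must show that any nonempty $A \subseteq V(T_t^2(a,b)) \setminus R$ satisfies $d(A) \geq d_t^2(a,b)$.

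First I would pass to a sparsest connected component to assume $A$ is connected. Since all vertices in a given $S_i$ have identical closed neighborhoods in $T_t^2(a,b)$, iterated application of Lemma~\ref{all or nothing lemma} reduces to the case in which each $S_i$ is either entirely contained in or disjoint from $A$. Writing $V' := \{x_i : S_i \subseteq A\}$, and noting that edges between distinct $S_i, S_j$ in $T_t^2(a,b)$ correspond precisely to spine edges $x_ix_j$ of $T_3(a,b)$, connectivity of $A$ forces $V' = \{x_i,\ldots,x_j\}$ to be a contiguous subpath of the spine.

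The core is then a direct computation. I would split the edges of $T_3(a,b)$ incident to $V'$ on the non-root side into \emph{heavy} edges, whose other endpoint lies in $\{x_1,\ldots,x_a\}$ or is one of the two blown-up clique roots (each such edge contributes $k^2$ edges in $T_t^2(a,b)$), and \emph{light} edges, whose other endpoint is a singleton root (each contributing $k$ edges); each $S_i \subseteq A$ adds a further $\binom{k}{2}$ internal edges. Writing $e_1(V')$ and $e_2(V')$ for the numbers of heavy and light edges, a short calculation yields
\[
d(A) = \frac{k-1}{2} + \frac{e_1(V') + e_2(V')}{|V'|} + (k-1)\cdot\frac{e_1(V')}{|V'|}.
\]
Proposition~\ref{prop::3.2} bounds the first ratio below by $b/a$. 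A direct bookkeeping on the subpath $V' = \{x_i,\ldots,x_j\}$ (interior spine edges contribute $|V'|-1$, while the boundary-spine and clique-root contributions combine via $\mathbf{1}[i>1] + \mathbf{1}[i=1] = 1$ and similarly at $j$) gives the clean identity $e_1(V') = |V'| + 1$, so $e_1(V')/|V'| \geq (a+1)/a$ since $|V'| \leq a$. Substituting and expanding $k = (t-1)/2$ recovers exactly $d_t^2(a,b)$.

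The main thing to verify carefully is that the identity $e_1(V') = |V'|+1$ holds uniformly across all cases of Construction~\ref{constr::3.2} (and thus of Construction~\ref{generalized fan construction}): the different labelings depending on whether $x_1, x_a$ have two or three rooted neighbors, the added root-root edges ($\ell_i r_i$ or $z_i w_i$, which lie entirely inside $R$), and the recursive triangle additions for $b > 3a+1$. In every such case, the subgraph of $T_3(a,b)$ induced by $\{x_1,\ldots,x_a\}$ remains the spine path, each of $x_1, x_a$ has exactly one clique-root neighbor in $T_3(a,b)$, and the recursively added $v_j$ are singleton roots that contribute only to $e_2(V')$, so none of these modifications perturb the count of heavy edges.
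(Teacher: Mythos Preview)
Your approach is essentially the same as the paper's: reduce to connected $A$, apply Lemma~\ref{all or nothing lemma} to get the all-or-nothing property on each $S_i$, observe that exactly $|V'|+1$ of the edges incident to $V'$ are ``heavy,'' and compare the resulting expression with $d_t^2(a,b)$. Your formula
\[
d(A) = \frac{k-1}{2} + \frac{e_1(V')+e_2(V')}{|V'|} + (k-1)\cdot\frac{e_1(V')}{|V'|}
\]
is exactly the paper's computation rewritten, and your bookkeeping for $e_1(V') = |V'|+1$ matches the paper's ``exactly $|V'|+1$ edges $v_iv_j \in E'$ have both endpoints corresponding to cliques of size $\frac{t-1}{2}$.'' Your uniform treatment of $b > 3a+1$ (absorbing the recursively added singleton roots into $e_2$) is a clean alternative to the paper's preliminary reduction to $2a+2 \leq b \leq 3a+1$.

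There is, however, a circularity you should fix. You invoke Proposition~\ref{prop::3.2} to bound $(e_1+e_2)/|V'| \geq b/a$. But in this paper Proposition~\ref{prop::3.2} is stated without proof, with the explicit remark that its proof is deferred to the general case --- and that general case is precisely the proposition you are proving (take $t=3$, so $k=1$ and $T_3^2(a,b) = T_3(a,b)$). Indeed, for $t=3$ your displayed formula collapses to $d(A) = (e_1+e_2)/|V'|$, and your argument then literally cites the conclusion. The fix is immediate: cite the balancedness of $T_2(a,b)$ (Proposition~\ref{prop::balanced:BC}) instead, which is exactly what the paper does. This works because passing from $T_2(a,b)$ to $T_3(a,b)$ only identifies root vertices and adds root--root edges, so the set of edges incident to any $V' \subseteq \{x_1,\dots,x_a\}$ is unchanged; hence $e_1(V')+e_2(V')$ equals the number of edges of $T_2(a,b)$ incident to $V'$, and Proposition~\ref{prop::balanced:BC} gives the required bound directly.
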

	
	\begin{proof}
        It suffices to show that $T_t^2(a,b)$ is balanced for $2a+ 2 \leq b \leq 3a+ 1$. This is because for $b > 3a+1$, in $T_t^2(a,b)$ every vertex has exactly one more incident edge than $T_t^2(a,b-a)$, and these edges are to the root set. Thus the density of every set (of non-root vertices) in $T_t^2(a,b)$ is exactly one more than in $T_t^2(a,b-a)$, so the balancedness of $T_t^2(a,b)$ follows from the balancedness of $T_t^2(a,b-a)$.
        
        Let $A$ be a subset of $V(T_t^2(a,b)) \setminus R$, where $R$ is the set of root vertices of $T_t^2(a,b)$. We may also assume, as in the proof of Proposition~\ref{proposition generalized spike properties}, that $A$ induces a connected subgraph of $V(T_t^2(a,b))$, and that for $v_i \in V(T_2(a,b))\setminus R$, the corresponding clique $S_i$ of $\frac{t-1}{2}$ vertices in $V(T_t^2(a,b)) \setminus R$ is either contained in $A$ or disjoint from $A$. We define
		
		\[
		V' := \{v_i \in V(T_2(a,b))\setminus R: S_i \subseteq A\}
		\]
		and
		\[
		E' : = \{v_iv_j \in E(T_2(a,b)): v_i \in V'\},
		\]
		and set $a' := |V'|$, and $b' := |E'|$. Note that the density of $V'$ in $T_2(a,b)$ is $\frac{b'}{a'} \geq \frac{b}{a}$ since $T_2(a,b)$ is balanced. Also note that for exactly $|V'| + 1$ edges $v_iv_j \in E'$, both $v_i,v_j$ correspond to cliques of size $\frac{t-1}{2}$ in $T_t^2(a,b)$, so
		\[
		d(S) = \frac{a' \binom{\frac{t-1}{2}}{2} + (a' + 1)\left( \frac{t-1}{2} \right)^2 + (b' - a' - 1)\frac{t-1}{2} }{a'\frac{t-1}{2}} = \frac{3t - 9}{4} + \frac{t-3}{2a'} + \frac{b'}{a'},
		\]
		which is clearly at least $d_t^2(a,b) = \frac{3t-9}{4} + \frac{t-3}{2a} + \frac{b}{a}$ as $a' \leq a$ and $\frac{b'}{a'} \geq \frac{b}{a}$.
	\end{proof}
	
	Finally, we exhibit the full range of rational densities achievable by Construction~\ref{generalized fan construction}.
	
	\begin{prop}\label{prop:type2rational}
		If $t \geq 3$ is an odd integer, then for any $d \in \mathbb{Q}$ with $d>t-1$, there exist positive integers $a,b$, such that $a$ is even, $2a + 2 \leq b$, and $d_t^2(a,b) = d$.
		
	\end{prop}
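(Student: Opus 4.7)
The plan is to solve the equation $d_t^2(a,b) = d$ directly by parametrizing and clearing denominators, since by Observation~\ref{observation density of Tt2} we have
\[ d_t^2(a,b) = \frac{3t-9}{4} + \frac{t-3}{2a} + \frac{b}{a}. \]
Writing $d = x/y$ for positive integers $x,y$, the equation $d_t^2(a,b) = x/y$ rearranges to
\[ b \;=\; a\left(\frac{x}{y} - \frac{3t-9}{4}\right) \;-\; \frac{t-3}{2}, \]
so once $a$ is chosen, $b$ is determined. The task reduces to choosing $a$ so that the resulting $b$ is a positive integer satisfying the required constraint $2a+2 \leq b$.

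My choice would be $a = 4cy$ for a positive integer parameter $c$ to be selected large. Then $a$ is automatically even, and substituting gives
\[ b \;=\; 4cx \;-\; cy(3t-9) \;-\; \frac{t-3}{2}. \]
Since $t$ is odd, $(t-3)/2$ is an integer, so $b \in \mathbb{Z}$; this is where the odd-$t$ hypothesis enters in a crucial way. The inequality $2a+2 \leq b$ becomes, after substitution and simplification,
\[ c\bigl(4x - y(3t-1)\bigr) \;\geq\; \frac{t+1}{2}. \]

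To see that this can be achieved, I would observe that $t-1 \geq \frac{3t-1}{4}$ for all $t \geq 3$ (with equality at $t=3$), so the hypothesis $d = x/y > t-1$ yields $4x > 4y(t-1) \geq y(3t-1)$, and hence $4x - y(3t-1)$ is a strictly positive integer. Choosing $c$ sufficiently large then yields the desired inequality and simultaneously ensures $b > 0$. Plugging back in confirms $d_t^2(a,b) = x/y = d$ exactly.

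The only potential obstacle here is integrality: the fraction $(t-3)/2$ would force divisibility issues if $t$ were even, and the fraction $(3t-9)/4$ requires $a$ to be a multiple of $4$ (after clearing $y$). Both are handled cleanly by the choice $a = 4cy$ together with $t$ being odd, so the argument is essentially bookkeeping once the parametrization is fixed. No structural or combinatorial difficulty arises beyond this.
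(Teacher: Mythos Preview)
Your proof is correct and follows essentially the same approach as the paper's: both solve $d_t^2(a,b)=d$ explicitly for $b$ in terms of $a$, use the odd-$t$ hypothesis to guarantee integrality of $(t-3)/2$, and then scale the fractional representation of $d$ (the paper via large $x,y$ with $a=2y$, you via the multiplier $c$ with $a=4cy$) to force $2a+2\le b$. The only cosmetic difference is that the paper absorbs the $\tfrac{3t-9}{4}$ term into its fraction $\tfrac{x}{y}$ before parametrizing, whereas you carry it through explicitly; the substance is identical.
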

	
	\begin{proof}
		Note that since $d>t-1$, we can write $d = \frac{3t - 9}{4} + \frac{x}{y}$ for some $x,y\in\mathbb{N}$ with $\frac{x}{y} > \frac{t + 5}{4} \geq 2$. Furthermore, if we set $a:= 2y$ and $b:= 2x - \frac{t-3}{2}$, then $d_t^2(a,b)=\frac{3t - 9}{4} + \frac{x}{y}$ and $a$ is both positive and even. Since $t$ is odd, $b$ is an integer, and as long as $x$ is large enough, $b$ will be positive. Finally, the condition $2a + 2 \leq b$ is equivalent to 
		\[
		\frac{x}{y} > 2 + \frac{t+1}{4y} ,
		\]
		which is fulfilled as long as $y$ is large enough. Thus, we can ensure that all the conditions claimed in the hypothesis are fulfilled by selecting a representation of $\frac{x}{y}$ with $x$ and $y$ sufficiently large. 
	\end{proof}

	\section{Exponents for cliques}\label{main thm sec}
	
	Now we compile all the ingredients to prove Theorem~\ref{thm::Ktrealizable}. We prove the following more specific theorem which is helpful for formalizing our induction.
	
	\begin{theorem}\label{thm::technical}
		For every integer $t \geq 2$, and for every rational $d \in (t/2,\infty)$, there exists balanced rooted graphs $F_1, \dots, F_k$ of rooted density at least $d$ and an integer $L_d(t)$ such that for all $L \geq L_d(t)$,
		\[ \ex\left(n,K_t,F_1^L \cup \cdots \cup F_k^L \right) = \Theta\left( n^{t-\binom{t}{2}/d} \right) ,\]
		where the implied constants may depend on any parameter but $n$.
	\end{theorem}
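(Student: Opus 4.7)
The plan is to induct on $t$. The base case $t = 2$ is immediate from Theorem~\ref{thm::BukhConlon}: for a rational $d > 1$, pick integers $b > a$ with $b/a = d$ and take $\cF = T_{K_2}(a,b)^L$. Proposition~\ref{prop::balanced:BC} confirms that $T_{K_2}(a,b)$ is balanced of rooted density $d$, so Theorem~\ref{thm::lowerbound} together with the elementary Bukh--Conlon counting argument sketched in Section~\ref{sec::intro:upper} yields $\ex(n,K_2,\cF) = \Theta(n^{2-1/d})$ for $L$ sufficiently large.

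For the inductive step, fix $t \geq 3$ and rational $d > t/2$. First, using Propositions~\ref{prop::rationaltype1} and~\ref{prop:type2rational}, together with the balancedness results of Propositions~\ref{proposition generalized spike properties} and~\ref{prop:type2}, I choose a balanced primary $K_t$-tree $(T,R)$ of rooted density exactly $d$: either some $T_t^1(a,b,s_0)$ with $s_0 \leq \lfloor t/2 \rfloor$, or, when $t$ is odd and $d > t-1$, some $T_t^2(a,b)$. In either case $T$ has glue size $s_0 \leq t-1$, and $T[R]$ decomposes as a disjoint union of cliques each of size strictly less than $t$. Next I fix an integer parameter $s$ to feed into Lemma~\ref{lem::rooting}: set $s = t-1$ when $d \geq t-1$, and otherwise $s = \max(s_0, \lceil 2d-t \rceil)$, which satisfies $s \leq t-2$. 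In both subcases $s \geq s_0$, $s \leq t-1$, $2d+1 \geq s+t$, and whenever $s < t-1$ the rooted graph $(K_t, K_{s+1})$ is balanced of rooted density $(t+s)/2 \geq d$. Invoking the inductive hypothesis at the same density $d$ for each $i \in \{2,\ldots,t-1\}$ (valid since $d > t/2 > i/2$), I obtain finite forbidden families $\cF^{(i)}$, and set
\[ \cF = (T,R)^L \cup (K_t, K_{s+1})^L \cup \bigcup_{i=2}^{t-1} \cF^{(i)}, \]
where the middle term is dropped when $s = t-1$. Every rooted graph whose powers appear in $\cF$ is balanced of density at least $d$, so Theorem~\ref{thm::lowerbound} yields $\ex(n,K_t,\cF) = \Omega(n^{t - \binom{t}{2}/d})$.

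For the matching upper bound, suppose for contradiction that $G$ is $\cF$-free on $n$ vertices with at least $Cn^t p^{\binom{t}{2}}$ copies of $K_t$, where $p = n^{-1/d}$ and $C$ is sufficiently large. I verify the four hypotheses of Lemma~\ref{lem::rooting}: (1) holds by choice of $C$; (2) follows from the inductive bounds $\ex(n,K_i,\cF^{(i)}) = O(n^i p^{\binom{i}{2}})$; (3) holds with $\ell = L-1$ because $G$ avoids $(K_t,K_{s+1})^L$, or trivially with $\ell = 1$ when $s = t-1$ (every $K_t$ lies in exactly one $K_t$); and (4) follows by decomposing $T[R]$ into its disjoint clique components $K_{k_j}$, bounding each factor by the inductive hypothesis, and multiplying to obtain $O(n^{|R|} p^{|E(T[R])|})$ copies of $T[R]$. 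Lemma~\ref{lem::rooting} then produces a member of $(T,R)^L \subseteq \cF$ in $G$, contradicting $\cF$-freeness, so $\ex(n,K_t,\cF) = O(n^{t - \binom{t}{2}/d})$.

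The principal obstacle is threading the parameter $s$ between two opposing constraints: the rooted graph $(K_t,K_{s+1})$ enforcing codegree condition~(3) has density $(t+s)/2$, which must be at least $d$ and so forces $s \geq 2d-t$, while Lemma~\ref{lem::rooting}'s arithmetic condition forces $s \leq 2d+1-t$ and $s < t$, pinning $s$ into a narrow integer interval. For $d \geq t-1$ this interval contains $t-1$, where condition~(3) becomes vacuous and the otherwise degenerate graph $(K_t,K_t)$ is sidestepped. For smaller $d$, the constructions $T_t^1$ and $T_t^2$ of Section~\ref{section constructions} are tuned so that the $K_t$-tree realizing a given density $d$ has glue size $s_0$ compatible with the allowed interval, making the entire assembly consistent; verifying this compatibility across all of the subintervals of $(t/2, \infty)$ covered by Propositions~\ref{prop::rationaltype1} and~\ref{prop:type2rational} is where the bulk of the bookkeeping lies.
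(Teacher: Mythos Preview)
Your proposal is correct and follows essentially the same approach as the paper's proof: induct on $t$, choose the primary $K_t$-tree $(T,R)$ from Constructions~\ref{generalized spike construction} or~\ref{generalized fan construction} according to the parity of $t$ and the size of $d$, adjoin the inductive families $\cF^{(i)}$ plus (when needed) the rooted clique $(K_t,K_{s+1})$ to enforce the codegree hypothesis of Lemma~\ref{lem::rooting}, and then apply Theorem~\ref{thm::lowerbound} for the lower bound and Lemma~\ref{lem::rooting} for the upper bound. The only cosmetic differences are that the paper splits at $d \le t-1$ versus $d > t-1$ (so the boundary $d=t-1$ lands in the case using $(K_t,K_{s'+1})$ with $s'=t-2$), whereas you split at $d \ge t-1$ and handle $d=t-1$ with $s=t-1$ and condition~(3) trivially satisfied; both choices satisfy $2d+1 \ge s+t$. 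Your defensive $s=\max(s_0,\lceil 2d-t\rceil)$ is in fact always equal to $\lceil 2d-t\rceil$, since Proposition~\ref{prop::rationaltype1} forces $d>(t+s_0-1)/2$ and hence $s_0 \le \lceil 2d-t\rceil$, matching the paper's $s'$ exactly.
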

	
	\begin{proof}
		We proceed by induction on $t$, with the base case being the proof of Theorem~\ref{thm::BukhConlon}, which we give here.
		
		\textbf{Base case}: $t=2$. Let $d \in (1,\infty)$ be a rational number, and let $a$ and $b$ be integers such that $1 \leq a < b$ and $d = b/a$. Let $(T,R)$ be the rooted graph $T_2(a,b)$ given in Construction~\ref{constr::BC}, so by Proposition~\ref{prop::balanced:BC}, $(T,R)$ is balanced with rooted density $d$. Let $L_d(2)$ be the constant $L_0$ guaranteed by Theorem~\ref{thm::lowerbound} applied with $(F_1,R_1)=(T,R)$. Then,
		\[ 
		\ex\left(n,K_2,(T,R)^L\right) = \Omega(n^{2-1/d})
		\]
		for all $L \geq L_d(2)$.
		
		For the upper bound, let $G$ be an $n$-vertex $(T,R)^L$-free graph for $L\geq L_d(2)$. By the contrapositive of Lemma~\ref{lem::rooting} (applied with $L=L$, $\ell=1$, $s=1$ and $t=2$) $G$ has $O(n^{2-1/d})$ edges, as desired.
		
		\textbf{Inductive step}: Let $t \geq 3$ be an integer and let $d \in (t/2,\infty)$ be a rational number. Let $p:= cn^{-1/d}$ with $c$ sufficiently large to be determined later. Let $\mathcal{F}$ be the collection of all rooted graphs given by the induction hypothesis for $2 \leq i < t$. We split into two cases depending on $t$ and $d$ to determine which construction we should use.
		
		\textbf{Case 1}: $t$ is even or $d\leq t-1$. With an eye towards applying Proposition~\ref{prop::rationaltype1}, let $s = \min\{\ceil{2d-t}, \floor{t/2}\}$. If $s=t/2$, then $t$ is even, and $t/2\leq 2d+t$, which implies that $d\in \left(\frac{3t-2}{4},\infty\right)$. If $s=\frac{t-1}{2}$, then $t$ is odd, and using $s\leq \ceil{2d-t} < 2d-t+1$, we have that $d > \frac{s+t-1}{2}$, and since $d\leq t-1=\frac{t+2s-1}{2}$, we have $d\in \left( \frac{t+s-1}{2}, \frac{t + 2s - 1}{2} \right]$. Finally, if $s=\lceil 2d-t\rceil$, then since $2d-t\leq s < 2d-t+1$, we have $\frac{t+s-1}{2} < d \leq \frac{t+s}{2}$, and consequentially, $d\in \left( \frac{t+s-1}{2}, \frac{t + 2s - 1}{2} \right]$. 
		
		Thus, in all subcases we can apply Proposition~\ref{prop::rationaltype1}, so there exist positive integers $a,b$ with $a < b$ such that $d_{t}^1(a,b,s) = d$. Let $(T,R)$ be the rooted $K_t$-tree $T_t^1(a,b,s)$ described in Construction~\ref{generalized spike construction}. By Observation~\ref{observation density of dt1} and Proposition~\ref{proposition generalized spike properties}, $(T,R)$ has rooted density $d$ and is balanced since $d \leq \frac{t+2s-1}{2}$ or $s=t/2$. 
		
		\textbf{Subcase 1.1:} $d\leq t-1$. Let $s':= \ceil{2d-t}\leq t-2$, and note that $s \leq s'$. Let $(K_t,K_{s'+1})$ be the rooted complete graph on $t$ vertices with exactly $s'+1$ vertices rooted; observe that it has rooted density $\frac{\binom{t}{2}-\binom{s'+1}{2}}{t-s'-1} = \frac{t+s'}{2} \geq d$. Let $L_0$ be the constant guaranteed by Theorem~\ref{thm::lowerbound} applied with $\{(F_i,R_i)\}:=\mathcal{F}\cup\{(T,R), (K_t,K_{s'+1})\}$, and set $L_d(t):=\max\{L_0,L_d(2),\dots,L_d(t-1)\}$.
		
		We claim that for $L\geq L_d(t)$,
		\[ 
		\ex\left(n,K_t,\mathcal{F}^L \cup (T,R)^L \cup (K_t,K_{s'+1})^L \right) = \Theta(n^{t-\binom{t}{2}/d}),
		\]
            where $\mathcal{F}^L = \{ (F,R)^L : (F,R) \in \mathcal{F} \}$.
		The lower bound is provided by Theorem~\ref{thm::lowerbound}. For the upper bound, we use Lemma~\ref{lem::rooting}. Let $G$ be an $n$-vertex graph that is $\mathcal{F}^L \cup (T,R)^L \cup (K_t,K_{s'+1})^L$-free. By induction, since $L_d(t)\geq L_d(i)$, $G$ has at most $n^i p^{\binom{i}{2}}$ copies of $K_i$ for all $i < t$ for sufficiently large $c$ (recall that $p:=cn^{-1/d}$). Since $G$ is $(K_t,K_{s'+1})^L$-free, every copy of $K_{s'+1}$ is in at most $L$ copies of $K_t$ in $G$. Since via Observation~\ref{observation density of dt1}, $T[R]$ is the disjoint union of cliques on $s<t$ vertices, we also have that via induction that $G$ contains at most $n^{|R|} p^{|E(T[R])|}$ copies of $T[R]$ for sufficiently large $c$. Finally, note that by Observation~\ref{observation density of dt1}, $(T,R)$ has glue size $s \leq s'$. By the contrapositive of Lemma~\ref{lem::rooting} (applied with $L=L$, $\ell=L$, $s=s'$ and $t=t$), having checked conditions 2-4 for $G$, we must have that $G$ has at most $t n^t p^{\binom{t}{2}}$ copies of $K_t$, when $c$ and $n$ are sufficiently large, as desired.
		
		\textbf{Subcase 1.2:} $t$ is even and $d> t-1$. Let $L_0$ be the constant guaranteed by Theorem~\ref{thm::lowerbound} applied with $\{(F_i,R_i)\}:=\mathcal{F}\cup\{(T,R)\}$, and set $L_d(t):=\max\{L_0,L_d(2),\dots,L_d(t-1)\}$. Similar to the last case, we claim that for $L\geq L_d(t)$, 
		\[ 
		\ex\left(n,K_t,\mathcal{F}^L \cup (T,R)^L \cup (K_t,K_{s'+1})^L \right) = \Theta(n^{t-\binom{t}{2}/d}).
		\]
		The lower bound is again provided by Theorem~\ref{thm::lowerbound}. For the upper bound, we define $G$ to be a $\mathcal{F}^L \cup (T,R)^L$-free $n$-vertex graph, and find via induction that $G$ contains at most $n^ip^{\binom{i}{2}}$ copies of $K_i$ for all $i<t$ and at most $n^{|R|} p^{|E(T[R])|}$ copies of $T[R]$ for sufficiently large $c$. By the contrapositive of Lemma~\ref{lem::rooting} (applied with $L=L$, $\ell=1$, $s=t-1$ and $t=t$), having checked conditions 2 and 4, and noting that condition 3 is trivially satisfied when $s=t-1$, we must have that $G$ has at least $tn^tp^{\binom{t}2}$ copies of $K_t$, when $c$ and $n$ are sufficiently large.
		
		\textbf{Case 2}: $t$ is odd and $d > t-1$. By Proposition~\ref{prop:type2rational}, there exists positive integers $a,b$ such that $a$ is even, $2a+1 < b$, and $d_t^2(a,b) = d$, where $d_t^2(a,b)$ is defined in Observation~\ref{observation density of Tt2}. Let $(T,R)$ be the rooted $K_t$-tree $T_t^2(a,b)$ defined in Construction~\ref{generalized fan construction}. By Observation~\ref{observation density of Tt2} and Proposition~\ref{prop:type2}, $(T,R)$ is balanced and has rooted density at least $d$.  Let $L_0$ be the constant guaranteed by Theorem~\ref{thm::lowerbound} applied with $\{(F_i,R_i)\}:=\mathcal{F}\cup\{(T,R)\}$, and set $L_d(t):=\max\{L_0,L_d(2),\dots,L_d(t-1)\}$.
		We claim that for $L\geq L_d(t)$,
		\[ 
		\ex\left(n,K_t,\mathcal{F}^L \cup (T,R)^L \right) = \Theta(n^{t-\binom{t}{2}/d}) .
		\]
		The lower bound is provided by Theorem~\ref{thm::lowerbound}. For the upper bound, we use Lemma~\ref{lem::rooting} with $s=t-1$. Let $G$ be an $n$-vertex  graph that is $\mathcal{F}^L \cup (T,R)^L$-free. By induction, $G$ has at most $n^i p^{\binom{i}{2}}$ copies of $K_i$ for all $i<t$ for sufficiently large $c$. Trivially, every copy of $K_t$ is in at most $1$ copy of $K_t$, and $(T,R)$ has glue size at most $t-1$. Since by Observation~\ref{observation density of Tt2}, $T[R]$ is the disjoint union of cliques on less than $t$ vertices, the number of copies of $T[R]$ in $G$ is at most $n^{|R|} p^{|E(T[R])|}$ for sufficiently large $c$. By the contrapositive of Lemma~\ref{lem::rooting}, having checked conditions 2-4 of $G$, we have that $G$ has at most $t n^t p^{\binom{t}{2}}$ copies of $K_t$, when $c$ and $n$ are sufficiently large, completing the proof.
	\end{proof}

We can now quickly derive our main theorem.
	
	\begin{proof}[Proof of Theorem~\ref{thm::Ktrealizable}]
		Theorem~\ref{thm::technical} handles all rational exponents strictly between $1$ and $t$. Observe that $t$ is a realizable exponent for $K_t$, since $\ex(n,K_t,\emptyset) = \binom{n}{t}$, and $1$ is a realizable exponent for $K_t$, since $\ex(n,K_t,K_t^+) = \floor{n/t}$, where $K_t^+$ is the graph formed by adding a pendant vertex to $K_t$.
	\end{proof}

	\section{Acknowledgements}\label{sec::acknowledgements}
	
	This work was initiated and partially completed while the authors were in the University of Illinois Urbana-Champaign's combinatorics group, whose activities were supported by NSF RTG grant DMS-1937241. The second author's visit to UIUC was made possible by the National Science Foundation grant DMS-1800749.

\bibliographystyle{amsplain}
\providecommand{\bysame}{\leavevmode\hbox to3em{\hrulefill}\thinspace}
\providecommand{\MR}{\relax\ifhmode\unskip\space\fi MR }
\providecommand{\MRhref}[2]{%
  \href{http://www.ams.org/mathscinet-getitem?mr=#1}{#2}
}
\providecommand{\href}[2]{#2}

\end{document}